\documentclass[12pt, twoside]{scrartcl}
\usepackage[automark, headsepline]{scrlayer-scrpage}

\usepackage{mathtools}  

\usepackage{amsmath}
\usepackage{amsthm}
\usepackage{amsxtra}
\usepackage{amssymb}

\usepackage{stmaryrd}

\usepackage{microtype}

\usepackage[utf8]{inputenc}

\usepackage[numbers]{natbib}

\usepackage[dvipsnames]{xcolor}

\setlength{\marginparwidth}{2cm}

\usepackage{pdfpages}

\usepackage{url}

\usepackage[psdextra]{hyperref}
\hypersetup{
  linkcolor = NavyBlue!80!Black,
  citecolor = Plum,
  urlcolor = Cerulean!70!Black,
  colorlinks = true,
  pdfauthor={Eric Hofmann}
}


\newcommand{\wwedge}[1]{\sideset{}{^{#1}}\bigwedge}
\renewcommand{\Re}{\operatorname{Re}}

\newcommand*{\R}{\mathbb{R}}
\newcommand*{\Q}{\mathbb{Q}}
\newcommand*{\Z}{\mathbb{Z}}

\newcommand*{\C}{\mathbb{C}}
\newcommand*{\F}{\mathbb{F}}
\newcommand*{\W}{\mathbb{W}}

\newcommand*{\OF}{\mathcal{O}_{\F}}
\newcommand*{\DF}{\mathcal{D}_{\F}}

\newcommand*{\Ug}{\mathrm{U}}
\newcommand*{\Og}{\mathrm{O}}

\newcommand*{\SL}{\mathrm{SL}}
\newcommand*{\SU}{\mathrm{SU}}

\newcommand*{\frakg}{\mathfrak{g}}
\newcommand*{\fraku}{\mathfrak{u}}
\newcommand*{\frakp}{\mathfrak{p}}
\newcommand*{\frakk}{\mathfrak{k}}


\newcommand*{\h}{\mathbb{H}}
\newcommand*{\Hp}{\h}
\newcommand*{\Dom}{\mathbb{D}}

\newcommand*{\Hll}{\mathcal{H}_{\ell, \ell'}} 
\newcommand*{\myz}{\mathfrak{z}}

\DeclarePairedDelimiter{\hlfp}{(}{)}
\DeclarePairedDelimiter{\nvert}{\lvert}{\rvert}
\newcommand*{\hlf}[2]{\hlfp*{ #1, #2}} 
\newcommand*{\hlfempty}{\hlf{\cdot}{\cdot}}
\newcommand*{\blf}[2]{\hlf{#1}{#2}_{\R}}
\newcommand*{\blfempty}{\hlfempty_{\R}}
\newcommand*{\abs}[1]{\nvert*{#1}}

\DeclarePairedDelimiter{\sangl}{\langle}{\rangle}
\newcommand*{\sform}[2]{\sangl*{#1, #2}}

\newcommand*{\Qf}[1]{Q\left(#1 \right)}
%

\newcommand*{\dual}{\sharp}
\newcommand*{\ebase}{\mathfrak{e}}


\newcommand*{\Schw}{\mathcal{S}}
\newcommand*{\Schwartz}{\mathcal{S}}

\newcommand*{\Da}{\mathcal{D}}

\newcommand*{\psikm}{\psi}

\newcommand*{\MfL}[2]{\mathrm{M}_{#1,#2}}
\newcommand*{\MfLw}[2]{\mathrm{M}_{#1,#2}^!}
\newcommand*{\CfL}[2]{\mathrm{S}_{#1,#2}}

\newcommand*{\Mfw}[1]{\mathrm{M}_{#1}^!}

\newcommand*{\Hmfp}{\mathrm{H}^{+}}
\newcommand*{\HmfL}[2]{\mathrm{H}_{#1, #2}}
\newcommand*{\HmfLp}[2]{\mathrm{H}_{#1, #2}^+}

\DeclareMathOperator*{\CT}{CT}

\newcommand*{\psipq}{\psi_{p,q}}

\newcommand*{\BB}{\mathbf{B}}
\newcommand*{\CC}{\mathbf{C}}
\newcommand*{\AAp}[1]{\mathbf{A}_{#1}}
\newcommand*{\AAm}[1]{\mathrm{A}_{#1}}
\newcommand*{\AAb}[1]{\mathbb{A}_{#1}}

\newcommand*{\besspoly}{h}
\newcommand*{\Vint}[2]{\mathcal{V}_{#1}\left( #2\right)}


\numberwithin{equation}{section}

\theoremstyle{plain}
\newtheorem{theorem}{Theorem}[section]
\newtheorem{corollary}[theorem]{Corollary}
\newtheorem{lemma}[theorem]{Lemma}
\newtheorem{proposition}[theorem]{Proposition}
\newtheorem*{theorem*}{Theorem}
\newtheorem*{proposition*}{Proposition}
\newtheorem*{corollary*}{Corollary}

\theoremstyle{definition}
\newtheorem{definition}[theorem]{Definition}
\newtheorem{remark}[theorem]{Remark}
\newtheorem{example}[theorem]{Example}

\theoremstyle{remark}
\newtheorem{notation}[theorem]{Notation}
\newtheorem*{rmk*}{Remark}

\KOMAoptions{abstract=on, bibliography=totoc}

\title{The Fourier-Jacobi expansion of the singular theta lift}
\author{Eric Hofmann}
\begin{document}

\maketitle
\begin{abstract}
 We give an explicit evaluation of the Fourier-Jacobi expansions of the singular theta lift of Borcherds type constructed in \cite{FH21} for the dual reductive pair $\Ug(1,1)\times\Ug(p,q)$, where $p,q\geq 1$. The input functions are allowed to be arbitrary  harmonic weak Maass form of weight $k = 2 -p -q$. For this purpose, we adapt a method introduced by Kudla in \cite{K16}.
 Further, as an application, for the case $\Ug(p,1)$, we recover a new infinite product expansion for a Borcherds form, analogous to the case $\Og(p,2)$  treated in \cite{K16}.      
\end{abstract}
\setcounter{tocdepth}{\subsectiontocdepth}
\tableofcontents
\section{Introduction}
Recently, in \cite{FH21}, Jens Funke and the author employed singular theta lifts for the  dual reductive pair $\Ug(1,1) \times \Ug(p,q)$ to construct Green forms for codimension $q$ cycles on Shimura varieties associated to unitary groups. In particular, they constructed a singular theta lift of Borcherds type for weak harmonic Maass forms $f \in \Hmfp_{k}$ of weight $k = 2-q-p$ as input functions.
The intent of the present paper is to evaluate this lifting explicitly and to determine its Fourier-Jacobi expansion, for any signature $(p,q)$, $p,q\geq 1$, and any input function $f\in \Hmfp_{k}$. Note that, in general, there is no complete Fourier expansion, as the symmetric domain for $\Ug(p,q)$ is not a tube domain (unless $p=q$).

For this purpose, we might consider adapting the method introduced by Borcherds in \cite{Bo98}, which he used to calculate the Fourier expansion of his singular theta lift for indefinite orthogonal groups $\Og(p,q)$. This method, however, relies on Poisson summation and the successive reduction to lattices of lower dimension and reduced signature. For example, for $\Og(p,2)$, the case giving rise to the celebrated Borcherds products\footnote{Note that our signature convention differs from that used by Borcherds in \cite{Bo98}.}, the reduction goes from signature $(p,2)$ to the Lorentzian case $(p-1,1)$ and finally to definite lattices $(p-2,0)$.
These repeated reduction steps make Borcherds' recipe appear unfeasible here, as we strive for a general explicit evaluation for any signature $(p,q)$ with $p,q\geq 1$. 

Instead, we choose a  more representation theoretic approach. More precisely, we adapt a method introduced by Kudla in \cite{K16}, which starts with a decomposition of the theta integral along $\SL_2(\Z)$-orbits. 
For details of this procedure see Section \ref{sec:eval_Kstyle} below. 

Initially, we carry out the evaluation only at the base point of the symmetric domain, see Theorem \ref{thm:Phipq_atz0} for the results. (In fact, the actual evaluation of the unfolding integrals, which is rather technical, is carried out in a separate Section \ref{sec:unfolding}.)   

Then, we apply the operation of the Heisenberg group to obtain an explicit form of the Fourier-Jacobi expansion, see Section \ref{sec:FJ_exp}. In this context, it is quite natural to use to group operation as coordinates, viewing every point of the symmetric domain as the image of the base point under the operation of a suitable group element.

Finally, as an application, we examine the case of signature $(p,1)$ in more detail, see Section \ref{sec:p_one}. We recover an infinite product expansion for a Borcherds form, somewhat similar to that in \cite{K16}.  

For our approach it is advantageous to use the mixed model of the Weil representation. Hence,
in Section \ref{sec:psi_to_mm}, as a preparation for the main part, we determine the intertwiners from the more common Schrödinger model and apply them to the Schwartz form $\psi$ from \cite{FH21} and to the Fourier coefficients of the input function for the lift. In a sense, our approach here is somewhat similar to the way Murase and Sugano \citep[see][]{MuSu2} use the mixed model in their evaluation of the Fourier-Jacobi expansion of the unitary Kudla lift for the reductive pair $\Ug(1,1)\times \Ug(2,1)$, the original reference for which is   \citep[][]{K79}. 

A number of problems remain open for future work. For example, 
it should be interesting to study the behavior of the lift on the boundary of $\Dom$ in terms of the Fourier-Jacobi expansion. This seems particularly feasible in signature $(1,q)$ as the compactification theory is essentially the same (via complementary
 spaces) as that for $(p,1)$, which is well-known \citep[see][]{Hof17}, or \citep[cf.][]{How13} for more details .
 
 Also, we should point out that for $p,q > 1$ we determine the Fourier-Jacobi expansion only up to a constant (see Remark \ref{rmk:no_rkzero_term}). To explicitly calculate this final contribution is thus also left open for the time being.

 Most of the results in this paper\footnote{with the notable exception of the product expansions in Section \ref{sec:p_one}.} are contained in the author's Habilitationsschrift \cite{H21}, mainly in Chapter 4  and Appendix A.2.1.

 \subsection{Overview of results}

The main goal of the paper is to calculate an explicit form of the Fourier Jacobi expansion for the  singular theta lift $\Phi(z,f, \psi)$ for a harmonic Maass form $f \in Hmfp_k$ with weight $k=2-p-q$.
We work in the mixed model of the Weil representation, for the setup of which see Section \ref{sec:psi_to_mm}.

Let $V$ be a complex hermitian space of dimension $m$ and signature $(p,q)$ with $p,q\geq 1$ and  $p + q = m$. Let $G = \Ug(V)$ be the unitary group of $V$. We can view the associated hermitian domain as $\Dom$ as the Grassmannian of negative definite $q$-planes in $V$. 

The Schwartz form $\psi$ from \cite{FH21} is contained in the space of Schwartz form over $V$ valued in the closed differential $(q-1, q-1)$-forms, 
\[
\psi \in \left[ \Schwartz(V) \otimes \mathcal{A}^{q-1,q-1} \right]. 
\]
Let $L$ be an even lattice in $V$ and assume for the purposes of this introduction that $L$ is unimodular. We take $G_L$ to be a finite index subgroup of the stabilizer of $L$ in $G$. 

To formulate the intertwiners for the mixed model we the use the following coordinates: 
Let $\ell$, $\ell'$ be isotropic lattice vectors with $\hlf{\ell}{\ell'} =1$. Denote $W = V\cap\ell^\perp\cap\ell'^\perp$. For a vector $x \in V$ write $x$ in the form $x = \alpha\ell + x_0 + \beta\ell'$, with $x_0 \in W$. 
Now, in passing to the mixed model, one has to carry out a partial Fourier transform in the variable $\alpha$. We denote the new variable by $\beta'$ and set $\eta = [\beta, \beta']$. For a lattice vector $x$, we can consider $\eta$ as a rational $2\times 2$-matrix.

Now, the main idea from \cite{K16} for the  evaluation of the regularized integral is that due to invariance under the operation of $\SL_2(\Z)$  the regularized integral can be decomposed by systems of representatives of $\SL_2(\Z)$-orbits of  $\eta$ (viewed as a rational $2\times 2$-matrix), which are further differentiated according to their rank. Thus, we set 
\[
\begin{gathered}
\Phi(z, f, \psi) = \Phi_0(z,f,\psi) +  2\cdot \sum_{i=1}^2 \Phi_i (z, f, \psi),\quad\text{with} \\
\Phi_i(z, f, \psi) \vcentcolon = 
\int_{ \SL_2(\Z)\backslash \mathbb{H}}^{reg}
\sum_{\substack{ \eta =  [\beta, \beta'] / \sim\\  \operatorname{rank}(\eta) = i}}
\sum_{\gamma\in\SL_2(\Z)_\eta\backslash \SL_2(\Z)} \left\langle 
f(\gamma\tau), \overline {\theta_{\eta}(\gamma\tau, z)} \right\rangle_L 
v^{-2} du\, dv.
\end{gathered}
\]
(The factor $2$ for the terms with non-vanishing $\eta$ appears here, because $-1_2$ operates trivially, hence the contributions of $\gamma$ and $-\gamma$ are the same.)
Moreover, due to the rapid decay of the integrand, the integrals can be evaluated for each term separately, with fixed $\eta$, and summed up later $(i=0,1,2)$:
\[
\Phi_i(z, f, \psi) =\vcentcolon \sum_{\substack{ \eta =  [\beta, \beta'] / \sim\\  \operatorname{rank}(\eta) = i}} \sum_{\gamma \in \SL_2(\Z)_\eta\backslash \SL_2(\Z)} \sum_{n \gg -\infty}
\left({a}^+(n) \phi_i(n,\eta)^+ + {a}^{-}(n)\phi_i(n,\eta)^{-}\right).
\]
Here, ${a}^+(n)$ and ${a}^-(n)$ denote the Fourier coefficients of the holomorphic part and non-holomorphic part of $f$, respectively\footnote{Strictly speaking, we need to consider the Fourier coefficients in the mixed model, in the main text they will be denoted by $\hat{a}^\pm(n)$.  Since in the introduction we assume $L$ to be unimodular, the distinction is unnecessary here.}. The terms  $\phi_i(n,\eta)^{\pm}$ are given by
\begin{align*}
\phi_i(n,\eta)^+ & = \int_{\SL_2(\Z)_\eta\backslash \Hp}^{reg} e^{2\pi i n u} \theta_\eta(\tau, z) \, d\mu(\tau), \\ 
\phi_i(n,\eta)^-& = \int_{\SL_2(\Z)_\eta \backslash \Hp}^{reg} \Gamma(1-k, 4\pi\abs{n})e^{2\pi i n u} \theta_{\eta}(\tau, z)\,  d\mu(\tau) 
\qquad (n\neq 0), \\
\phi_i(0, \eta)^- & = \int_{\SL_2(\Z)_\eta\backslash \Hp}^{reg} \theta_{\eta}(\tau, z) v^{1-k}\, d\mu(\tau).
\end{align*}
For $i=0$ the domain of integration  is given by a usual fundamental domain $\SL_2(\Z)\backslash \Hp$. For $i = 1$, it is given by $\SL_2(\Z)_\infty \backslash \Hp$. Finally for $i=2$ we integrate over the whole upper half plane. 

We remark that we do not determine the \lq rank $0$\rq\ (i.e.~ $i=0$) term in general, which is given by the convolution integral of an indefinite theta series and contributes only an additive constant to the lift. However, this term falls out in signature $(1,q)$ (an example we treat prominently, see Example \ref{ex:sig_q1} and Corollary \ref{cor:FJ_oneq}). In signature $(p,1)$, where the theta series is definite, it can be worked out using the methods of Borcherds \cite{Bo98}, see below.

To facilitate the calculation somewhat we first evaluate all terms at the base point $z_0 \in \Dom$, the results can be found in Theorem \ref{thm:Phipq_atz0}. Then, we apply the group action of $G$ to obtain the Fourier-Jacobi expansion. Consider the Levi decomposition $G = NAM$, with $M\simeq\SU(p-1,q-1)$,  $A\simeq \mathrm{GL}([\ell])$  and $N$ a Heisenberg group. We use elements of these subgroups as coordinates and give the Fourier-Jacobi expansion of the lift in these terms, see Theorem \ref{thm:FJexp_pq}.

Since the notation is quite involved, rather than reproducing both main Theorems in full generality here, we state their results in two  example cases,  signature $(p,1)$ and signature $(1,q)$.  In the latter case, for simplification we assume that the input function $f$ is a weakly holomorphic modular form, i.e.\ $f\in \Mfw{k}$. Also recall our assumption that $L$ is unimodular.

\paragraph{Signature $(p,1)$} 
 If the signature of $V$ is $(p,1)$, the Schwartz form $\psi$ is essentially the Gaussian, indeed $\psi = 2i\varphi_0$. In the mixed model and using the coordinates introduced above, it takes the form  
	\[
	\widehat\psi_{p,1}(\sqrt{2}(\eta, x_0), \tau) = 
	2i \exp\left(-\frac{2\pi}{v}\left[  \abs{\beta' + \bar\tau\beta }^2 + 2v 
	\Im\left(\beta' \bar\beta\right) \right]  \right) e^{2\pi i \tau 
		\hlf{x_0}{x_0}} \otimes 1. 
	\]
Thus, the resulting contributions to the lift of $f \in \mathrm{H}^+_{1-p}$ take a fairly simple form. For example, for fixed $\eta$ and $n$ one has
	\[
	\begin{aligned}
	\phi_2(n,\eta)^+(z_0)  
	& = \frac{\sqrt{2}}{2}\abs{\beta} {\BB_\eta}^{-\frac12}
	\exp\left(
	- \frac{2\pi}{\abs{\beta}^2} \abs{\AAp{\eta}}
	\left( \tfrac{1}{2} \BB_\eta \right)^{\frac12} \right)
	e^{-2\pi  i \CC_\eta\left( \hlf{x_0}{x_0} + n\right)},
	\end{aligned}
	\]
where $\AAp{\eta}, \BB_\eta$ and $\CC$ are defined as
\[
  \begin{gathered}
    \AAp{\eta}(n , x_0) =  
n  + 2\abs{\beta}^2 + \hlf{x_0}{x_0}, \quad 
 \CC_\eta =  \frac{\Re(\beta'\bar\beta)}{\abs{\beta}^2} \quad\text{and}\quad
\BB_\eta = 2\Im\left(\beta'\bar\beta\right)^2. 
\end{gathered}
\]
The other contributions $\phi_i^\pm(n,\eta)^\pm$, $i=1,2$ for this case can be found in Example \ref{ex:sig_p1}.

The Fourier-Jacobi expansion in this signature  takes the form (see Corollary \ref{cor:FJ_exp_pone}):
\[
\frac{1}{2i}\Phi(z,f, \psi) = c_0'(t,w) + \sum_{\kappa \in \Q^\times} c_\kappa'(t,w) e^{2\pi i \kappa \Re\tau_\ell},
\]
where the constant term $c_0(t,w)$ is given by
\[
\begin{aligned}
c_0'(t,w)  =  & \; 4\pi I_0 +  t^2 \sum_{\beta' = (a,b)}
  \biggl[
a^-(0)  \frac{1}{\left( 2\pi t^2 \abs{\beta'}\right)^{p+1}} \Gamma(p+1) \;+ \\
& \;  \sum_{n\neq 0} \Bigl( a^+(n)  \frac{1}{2 \pi t^2\abs{\beta'}^2}
+      a^-(n) (p-1)! \sum_{r = 0}^{p-1} \frac{\pi^r}{r!} (4\abs{n})^{\frac{r}{2} - \frac{1}{4}} t^{r - \frac12} \abs{\beta'}^{r-\frac12}
\\ & \qquad \cdot h_r\left( \frac{1}{4\pi t \abs{\beta'}\abs{n}^{\frac12}}  \right) e^{-4\pi t\abs{\beta'}\abs{n}^\frac12}\Bigr) \biggr]  
\cdot e\Bigl(  - \Re\left(\beta' \hlf{x_0}{w}\right) \Bigr).
\end{aligned}
\]
The constant $I_0$ occurring here is a rational number,  which can be evaluated using the methods of \cite{Bo98}, see \cite{K16} for details.
The coefficients $c_\kappa(t,w)$ $(\kappa>0)$ take the following form (note that since these coefficients come from the integrals $\Phi_i(z, f, \psi)$ with $i\neq 0$, a factor $2$ occurs here, see above)
\[
c_\kappa'(t,w)  = 2\sum_{a,b}\sum_m A_\kappa(n, [\beta, \beta'])(t,w),
\]
wherein 
\[
\begin{multlined}
A_\kappa(n, [\beta, \beta'])(t,w) = \left( a^+( n) \frac{\sqrt{2}}{2} 
\frac{ t\abs{\beta}}{\BB_\eta^{\frac12}} + a^-(n) A^-_n(\eta; t, w) \right) \\
\cdot
\exp\left( - 2\frac{\pi}{\abs{\beta}^2}
\abs{\AAp{t\eta}(x_0 - \beta w ) }\left(\tfrac12\BB_\eta\right)^{\frac12}
-  2\pi i\bigl[  \CC_{\eta}\left(\hlf{x_0}{x_0} + n\right) + 2\alpha\Im\hlf{x_0}{w} \bigr]\right),
\end{multlined}
\]
with a non-holomorphic term $A^-_n(\eta; t,w)$, given by
\[
\begin{multlined}
\sqrt{2}
(p-1)!\sum_{r=0}^{p-1}
\frac{(4\abs{n}\pi)^r}{r!} t^{2r + 1 + 2} \abs{\beta} \BB_{ \eta }^{\frac{r-1}2 } 
\left(
\tfrac12 \AAp{t\eta}^2(x_0 - \beta w) -  2 t^2 \abs{\beta}^2 \left(2\abs{n} - n\right) \right)^{-\frac{r}{2}} \\
\cdot
h_{\max\{0,r-1\}}\left(
\frac{\abs{\beta}^2}{2\pi} 
\left(  \left( \tfrac12\AAp{t\eta}(x_0 - \beta w) + 2t^2\abs{\beta}^2( 2\abs{n} - n) 
\right)   \BB_\eta
\right)^{-\frac12}   
\right).          
\end{multlined} 
\]
We remark that in this signature, the Fourier-Jacobi expansion can be used to obtain a form of Borcherds product expansion, distinct from that in \cite{Hof11, Hof14}, see Section \ref{sec:p_one} below. 

\paragraph{Signature $(1,q)$} As a second example, we consider the case of signature $(1,q)$. For simplicity, let $f$ be a weakly holomorphic modular form, so there is no non-holomorphic part. Denote by $a(n)$ be the Fourier coefficients of $f$ in the mixed model.

Then,
\[
\Phi(z, f, \psi_{1,q}) =2\cdot \frac{2i(-1)^{q-1}}{2^{2(q-1)}} 
\Bigl( \sum_{i=1}^2 \sum_{\substack{ \eta =  [\beta, \beta'] / \sim\\  \operatorname{rank}(\eta) = i}}
\sum_{n\gg -\infty}{a}(n)  \phi_i(n,\eta)^+   \Bigr)\otimes \Omega_{q-1}\bigl(\underline{1}; \underline{1}\bigr).
\]
See Subsection \ref{subsec:schwartz_forms} concerning the notation of the differential form. 
The non-singular terms are given as follows. For two positive integers $h, j$ we define indices $\nu$, $\nu'$ by setting $\nu = h + j - q - \frac12$ and   $\nu' =  \abs{\nu} - \frac12$. Then, we have
\[
\begin{multlined}
  \phi_{2}(n,\eta)^{+} (z_0)   = 
  (-\pi)^{q-1}\frac{ \sqrt{2}}{2} \sum_{M=0}^{2(q-1)}  R_{q-1,q-1}(\eta; M) \sum_{j=0}^{\lfloor \frac{M}{2} \rfloor} \frac{1}{\pi^j} \abs{\beta}^{-2(M-j)} \\
   \cdot \sum_{h = 0}^{M-2j} \frac{i^h}{2^{h +3j}}\AAm{\eta}^h ( - \CC_\eta)^{M-2j-h}  \abs{\beta}^{2h - 2j +1} \binom{M-2j}{h} \frac{M!}{j! (M-2j)!} \\   \cdot
  \exp\left(- \frac{2\pi}{\abs{\beta}^2} \left( \AAb{\eta}\BB_\eta\right)^{\frac12} \right)
e\left( - \CC_\eta\left( \hlf{x_0}{x_0} + n \right) \right) 
\cdot
   \AAb{\eta}^{ -\frac{\nu}{2} - \frac14} \BB_\eta^{\frac{\nu}{2} - \frac14} 
h_{\nu'} \left( \frac{\abs{\beta}^2}{2\pi \left( \AAb{\eta}\BB_\eta\right)^{\frac12}}  \right),  
\end{multlined}
\]
with
\[
R_{q-1,q-1}(\eta; M) = \sum_{\substack{0\leq \mu_1,\mu_2 \leq q-1 \\ \mu_1 + \mu_2 = M}}
\abs{\beta}^M \abs{\beta'}^{q-1} \Re\left(\beta'^{-\mu_1} \bar\beta'^{-\mu_2} \right) \binom{q-1}{\mu_1} \binom{q-1}{\mu_2}.
\]
For the rank 1 terms one has
\[
  \begin{gathered}
\phi_1(n,\eta)^+(z_0) = (-\pi)^{q-1} \abs{\beta'}^{q-2} \left(2 \abs{n} \right)^{\frac{q}{2}} 
K_q\left( 2\sqrt{2}\pi \abs{\beta'}\abs{n}^\frac12 \right) \quad(n\neq 0), \\
\phi_1(0,\eta)^+(z_0) = 
 (-\pi)^{q-1} \abs{\beta'}^{q-2} \left(2 \abs{\Qf{x_{0,-}}} \right)^{\frac{q}{2}} 
 K_q\left( 2\sqrt{2}\pi \abs{\beta'}\abs{\Qf{x_{0,-}} }^\frac12 \right).
\end{gathered}
\]
The Fourier-Jacobi expansion of $\Phi(z, f, \psi_{1,q})$ is given by (see Corollary \ref{cor:FJ_oneq} below)  
  \[
      \frac{2^{2(q-1)}}{2i(-1)^{q-1}}  \Phi(z, f, \psi_{1,q}) =
      \left( c_0'(t,w) +
        \sum_{\kappa \in \Q^\times} c_\kappa'(t,w) e^{2\pi i \kappa r} \right)
      \otimes \Omega_{q-1;q-1}(\underline{1}, \underline{1}), \\
    \]  
    with  
    \[\begin{aligned} 
        c_0'(t,w) &= 2\cdot\sum_{ab}\sum_{n}
        B^{\underline{1}, \underline{1}}
          \left(n, \left( \begin{smallmatrix}a \\ b 
              \end{smallmatrix}
              \right) \right)(t,w)
         \quad\text{and}\\
          c_\kappa'(t,w) & =
          2\cdot\sum_{\substack{a,\alpha \\ a\alpha = \kappa }} \sum_b \sum_m
          A^{\underline{1}, \underline{1}}_\kappa\left(n, \left(\begin{smallmatrix} a & b \\ & \alpha\end{smallmatrix}\right) \right)(t,w) \quad(\kappa\neq 0). 
    \end{aligned}
  \]
 Herein, the  coefficients $B^{\underline{1}, \underline{1}}(n, \left(\begin{smallmatrix} a \\ b 
 \end{smallmatrix}\right))(t,w,\mu)$ are given by
 \begin{equation*}
 (- \pi)^{q-1}  
 a^+(n) t^{q}\abs{\beta'}^{q-2}\left(2 \abs{n}\right)^{\frac{q}{2}} K_{q}\left(2 \sqrt{2} \pi \abs{\beta'} \abs{n}^{\frac12} \right)   
 e\left( - \Re\left(\beta' \hlf{x_0}{w} \right)\right) 
\end{equation*} 
for $n\neq 0$ and by 
\begin{equation*}
(- \pi)^{q-1} 2  t^{q} \abs{\beta'}^{q-2} e\left( - \Re\left(\beta' \hlf{x_0}{w} \right)\right) 
 a^+(0) \abs{\Qf{x_{0,-}}} ^{\frac{q}{2}}
K_{q}\left(2^{\frac32}   \pi \abs{\beta'} \abs{\Qf{x_{0,-}}}^{\frac12} \right)
\end{equation*} 
for $n=0$. 
The coefficients $A^{\underline{1}, \underline{1}}_\kappa$ are given as follows (with $\eta = \left(\begin{smallmatrix} a & b \\ & \alpha\end{smallmatrix}\right)$):
 \begin{multline*} 
 A^{\underline{1}, \underline{1}}_{\kappa}\left(n,\eta \right)  = (-\pi)^{q-1}  t^{2(q-1)} 
\frac{\sqrt{2}}{2}\,
\sum_{M=0}^{2(q-1)}  R_{q-1, q-1}(\eta,M)
\\  \;\cdot\, \sum_{j=0}^{\left\lfloor \frac{M}{2} \right\rfloor} \frac{1}{\pi^j} \sum_{h = 0}^{M -2j}
\frac{i^h}{2^{h + 3j}}  
\AAm{t\eta}^h(x_0 - \beta w) \left( - \CC_\eta\right)^{M - 2j -h}  t^{2h-2j}\abs{\beta}^{- 2h - 2 j}
\binom{M -2 j}{h} \frac{M!}{j! (M-2j)!}  \notag
\\
\cdot  a^+(n) A^+_n( \eta; t,w)  \cdot e\Bigl( - \CC_\eta\left(\hlf{x_0}{x_0}  + n\right) + 2\alpha\Im\hlf{x_0}{w} \Bigr),
\end{multline*}
where $A^+_n$ is given by  
\begin{multline*} 
A^+_n( \eta; t,w) =  t^{2\nu-1}\left({\AAb{t\eta}(x_0 - \beta w)}\right)^{-\frac{\nu}{2} - \frac14}
 {\BB_\eta}^{\frac{\nu}{2}-\frac14}
 \exp\left(-\frac{2\pi}{\abs{\beta}^2} \left( \AAb{t\eta}(x_0 - \beta w) \BB_\eta\right)^{\frac12}\right) \\
\cdot h_{\nu'} \left( \frac{\abs{\beta}^2 }{ 2\pi  \left( \AAb{t\eta}(x_0 - \beta w) \BB_\eta\right)^{\frac12} }\right), 
\end{multline*}
with indices $\nu = h+j+1-q$ and $\nu' = \abs{\nu} - \frac12  = \operatorname{max}(q-1-h-j, h+j-q)$.

\bigskip
\paragraph{Acknowledgments}  The initial stages of this work were carried out during the author's stay\footnote{Supported by a research fellowship (Forschungsstipendium) of the DFG.} at the Department of Mathematical Sciences at Durham University during the academic year 2017/18 as a spin-off from the work on \cite{FH21}, and I would like to thank the Department for its hospitality during my stay.
Further, I want to thank Jens Funke, my coauthor in \cite{FH21}, for his initial contribution, as both the idea of adapting Kudla's method from \cite{K16} and that of using the group action as coordinates were due to him. I would also like to thank him for a number of helpful discussions and encouragement during later stages of this work. 
I would further like to thank Winfried Kohnen, Steven Kudla and Rainer Weissauer for refereeing my Habilitationsschrift, which already contained most of the results in this paper. 

\section{Setup and notation} \label{sec:setup}

\subsection{A hermitian space} 
Let $V$ be a complex vector space of dimension $m$, equipped with a non-degenerate indefinite hermitian form $\hlfempty$ of signature $(p,q)$ with $p,q\geq 1$ and $p+q=m$. We will assume that $\hlfempty$ is $\C$-linear in its second argument and conjugate-linear in the first argument.

Pick standard orthogonal basis elements\footnote{In this paper, following Kudla and Millson \cite{KM90} we use 'early' Greek letters for indices ranging from $1$ to $p$ and 'late' Greek letters for indices ranging from $p+1$ to $p+q$.} $v_1,  \dotsc, v_m$ with $\hlf{v_\alpha}{v_\alpha} = 1$ for $\alpha =1, \dotsc, p$ and $\hlf{v_\mu}{v_\mu} = -1$ for $\mu = p+1, \dotsc, m$. Let $z_\alpha$ and $z_\mu$ denote the corresponding coordinate functions. Hence for $x\in v$, one has
\[
x = \sum_\alpha  z_\alpha v_\alpha + \sum_\mu z_\mu v_\mu \quad \text{and} \quad 
\hlf{x}{x} = \sum_{\alpha} \abs{z_\alpha}^2 + \sum_{\mu} \abs{z_\mu}^2.
\]
Further, we may consider $V$ as a real quadratic space with the bilinear form $\blfempty \vcentcolon= \Re\hlfempty$. Denote this quadratic space by $V_\R$.

Denote by $G \simeq \Ug(p,q)$ the unitary group of $V$ and let $\Dom = G/K$, with $K \simeq \Ug(p)\times\Ug(q)$, be the symmetric domain for the operation of $G$ on $V$. We realize $\Dom$  as the Grassmannian of negative definite $q$-planes in $V$,
\[
\Dom \simeq \left\{  z \subset V\,;\, \operatorname{dim}(z) = q, \hlfempty\mid_z < 0\right\},
\]
and fix $z_0 = \operatorname{span}_\C \left\{z_\mu ;\,  \mu = p+1, \dotsc, m \right\}$ as the base-point.

Given $z \in \Dom$, for $x\in V$ denote by $x_z$ and $x_{z^\perp}$ the orthogonal projections onto $z$ and its orthogonal complement $z^\perp$, respectively. Then, the associated standard majorant $\hlf{x}{x}_z$ is given by
\[
\hlf{x}{x}_z = \hlf{x_{z^\perp}}{x_{z^\perp}} - \hlf{x_z}{x_z}.
\]

\paragraph{Hermitian lattices} 
Let $\F = \Q\left( \sqrt{\DF}\right)$ be an imaginary quadratic number field with discriminant $\DF$ for which we fix an embedding of $\F$ into $\C$.
We denote by $\OF$ the ring of integers and let $\delta_\F$ be the square root of the discriminant, with the principal branch of the complex square root function. 
Further, denote by $\DF^{-1}$ the inverse different ideal, given by $\delta_\F^{-1}\OF$. 
Finally, we denote by $\kappa_\F$ a generator of $\OF$ as a $\Z$-module,  chosen such that $\Im \kappa_\F = \frac12\delta_\F$.

Let $L\subset V$ be an even hermitian lattice, i.e.\ a projective module over the ring of integers $\OF$ in $\F$, on which the restriction of $\hlfempty$ is $\OF$-valued. Let $L$ be of full rank, i.e.~ $V = L \otimes_{\OF}\C$. 
The dual lattice $L^\dual$ is given by
 \[
	L^\dual = \left\{ x\in V \,;\, \hlf{x}{\lambda} \in \DF^{-1}, \forall \lambda \in L \right\}
	= \left\{ x\in V\,;\, \operatorname{trace}_{\F/\Q}\hlf{x}{\lambda} \in \Z, \forall \lambda \in L  \right\}.  
 \]
 The quotient $L^\dual/L$ is called the discriminant group of $L$.
Note that since $L$ is even, $L\subset L^\dual$, hence $L$ is integral. Also, evenness implies that $\hlf{\lambda}{\lambda}\in \Z$. 
Further, given an even hermitian lattice $L$ we denote by $L^-$ the same $\OF$-module as $L$ but with the hermitian form $-\hlfempty$.

Finally, let $G_L$ be a unitary modular group, i.e.\ a finite index subgroup of the  stabilizer $\operatorname{Stab}_G(L^\dual/L)$ of the discriminant group of $L$ in $G$. 

\subsection{Spaces of vector valued modular forms}
We denote the standard basis elements of  the group algebra $\C[L^\dual/L]$ by $\ebase_h$ $(h\in L^\dual/L)$ and introduce a hermitian pairing 
\[
\sform{}{}_L\,: \C[L^\dual/L] \times \C[L^\dual/L] \longrightarrow \C
\]
by setting $\sform{\ebase_h}{\ebase_g}_L = \delta_{h,g}$ for $h, g \in L^\dual/L$. Similarly for $L^{-}$.

Recall the finite Weil representation for the operation of $\SL_2(\R)$ on $\C[L^\dual/L]$, which we denote by $\rho_L$. It is most easily described through the operation of the generators $T= \left( \begin{smallmatrix} 1 & 1 \\ 0 & 1 \end{smallmatrix} \right)$ and  $S= \left( \begin{smallmatrix} 0 & -1 \\ 1 & \phantom{-}0 \end{smallmatrix} \right)$:
\[
\rho_L(T) \ebase_h = e\left(\hlf{h}{h} \right) \ebase_h,\quad
\rho_L(S)  \ebase_h = \frac{i^{p-q}}{\sqrt{\abs{L^\dual/L}}} \sum_{g \in L^ \dual/L} e\left( -2\blf{g}{h}\right) \ebase_g.
\]
We denote by $\rho^\vee_L \simeq \bar\rho_{L}$ the dual representation. Note that $\rho^\vee_L \simeq \rho_{L^{-}}$.

For $k\in\Z$ and $\gamma\in\SL_2(\R)$ define the usual $k$-slash-operation on functions $\C[L^\dual/L] \rightarrow \C$ as
\[
f\mid_{k,L} \gamma = \left( c\tau  + d\right)^{-1} \rho_L(\gamma)^{-1} f\left(\gamma\tau\right)\qquad \text{if}\quad
  \gamma = \begin{pmatrix}a & b \\ c & d \end{pmatrix}.
\] 
The slash-operation for the dual representation $\rho_L^\vee \simeq \rho_{L^{-}}$ is defined similarly.

We recall some definitions for spaces of vector-valued modular forms transforming under $\rho_L$. 

\begin{definition}
 For $k\in\Z$, let $\CfL{k}{L}$, $\MfL{k}{L}$ and $\MfLw{k}{L}$ be the spaces of holomorphic functions $f\,:\, \Hp\rightarrow\C[L^\dual/L]$ which satisfy 
 \begin{enumerate}
  \item  $f \mid_{k,L} \gamma = f$ for all $\gamma\in\SL_2(\Z)$.
  \item If $f$ in $\MfLw{k}{L}$, then $f$ is meromorphic at the cusp $\infty$, while if $f \in \MfL{k}{L}$, it is holomorphic at the cusp. Finally, if $f\in \CfL{k}{L}$ it vanishes at $\infty$.
  Clearly $\CfL{k}{L} \subset \MfL{k}{L} \subset \MfLw{k}{L}$. The elements of these spaces are called cusps forms, (holomorphic) modular forms and weakly holomorphic modular forms, respectively. 
 \end{enumerate}
 \end{definition}

 Next, we introduce harmonic weak Maass forms following \cite{BrF04}. 
 \begin{definition}
  For $k\in\Z$, let $\HmfL{k}{L}$ be the space of twice continuously differentiable functions $f\,:\, \C\rightarrow \C[L^\dual/L]$ which satisfy
  \begin{enumerate}
   \item $f\mid_{k,L} \gamma = f$ for all $\gamma\in\SL_2(\R)$.
   \item There exists a constant $C>0$ such that $f(\tau) = O\left(e^{C v}\right)$ as $v\rightarrow\infty$.
   \item $f$ is harmonic, i.e. $\Delta_k f  = 0$.
  \end{enumerate}
 \end{definition}
 
 The space $\HmfLp{k}{L}$ is defined as the inverse image of the cusp forms $\CfL{2-k}{L^{-}}$ under the $\xi$-operator 
\[
 \xi_k : \HmfL{k}{L}\rightarrow \MfLw{2-k}{L^{-}},\qquad  f(\tau)\mapsto 2iv^k \frac{\overline{\partial f(\tau)}}{\partial \bar\tau}.
\]
 
 Recall that any harmonic weak Mass form has a decomposition $f(\tau) = f^+(\tau) +  f^-(\tau)$ into its holomorphic and its non-holomorphic part, respectively. 
The Fourier expansions of the two parts take the forms
\[
\begin{gathered}
f^+(\tau) = \sum_{h\in L^\dual/L} \sum_{n\in\Q}  a^+(h,n) e(n\tau) \ebase_h, \\
f^-(\tau) = \sum_{h\in L^\dual/L} \Bigl( a^-(h,0) v^{1-k} + \sum_{\substack{n \in \Q\\ n\neq 0}} a^-(h,n)   \Gamma(1-k, 4\pi nv) e(nu) \Bigr) \ebase_h.
\end{gathered}
\]
Finally, the principal part of $f$, denoted $P(f)$, is the Fourier polynomial 
\[
P(f)(\tau) = P(f^ +)(\tau) = \sum_{h \in L^ \dual/L}\sum_{\substack{n \in \Q\\ n<0}} a^+(h,n) e(n\tau) \ebase_h.
\]
It follows from these definitions that for $f \in \HmfLp{k}{L}$,
\[
  f(\tau) - P(f)(\tau) = O\left( e^{-Cv}\right) \qquad\text{as}\; v\rightarrow \infty 
\]
with some constant $C>0$. Further, there are exact sequences, see \citep[][Corollary 3.8]{BrF04} 
\begin{equation} \label{eq:exSeq}
  \begin{gathered}
    0 \longrightarrow \MfLw{k}{L} \longrightarrow \HmfL{k}{L} \stackrel{\xi_k}{\longrightarrow}  \MfLw{2-k}{L^-} \longrightarrow 0,  \\
0 \longrightarrow \MfLw{k}{L} \longrightarrow \HmfLp{k}{L} \stackrel{\xi_k}{\longrightarrow}  \CfL{2-k}{L^-} \longrightarrow 0.
\end{gathered}
\end{equation}
In particular, weakly holomorphic modular forms can be considered as harmonic Maass forms with trivial holomorphic part.

\subsection{Schwartz forms}\label{subsec:schwartz_forms}

\paragraph{The unitary Lie algebra} We recall some notation from \citep[][Sec.~2.2]{FH21}, see loc.~cit. for details.
Let $\frakg_0 = \fraku(V)$ be the Lie algebra of $G$. 
Denote by $\frakg = \frakg_0\otimes\C$ the complexification of the $\frakg_0$, which we view as a right vector space over $\C$. 

The Cartan decomposition $\frakg_0 = \frakk_0\oplus\frakp_0$ with $\frakk  = \operatorname{Lie}(K)= \fraku(p) \times \fraku(q)$ gives  
rise to a similar decomposition $\frakg = \frakk \oplus \frakp$, with complexified factors $\frakk = \frakk_0 \otimes \C$ and  $\frakp = \frakp_0 \otimes\C$. We denote the dual of $\frakp$ by $\frakp^*$.

Further, we have the Harish-Chandra decomposition pf $\frakg$, 
\[
\frakg= \frakk\oplus \frakp^+ \oplus \frakp^{-}. 
\]
We denote by $\left\{ \xi'_{\alpha\mu} \right\}$ and $\left\{ \xi''_{\alpha\mu} \right\}$ the dual basis elements of $\frakp^+$ and $\frakp^-$.

\paragraph{The special Schwartz form  \texorpdfstring{$\psi$}{psi}}

Denote by $\Schwartz(V)$ the Schwartz space of $V$. 
We briefly recall the definition of the special Schwartz form $\psi$ constructed in \cite{FH21} using the Schrödinger model of the Weil representation.
In the following, to emphasize the dependence on the signature of $V$, we will often denote Schwartz forms $\phi$  in $[\Schwartz(V) \otimes \mathcal{A}^\bullet(\Dom)]^G$ 
by $\phi_{p,q}$.

First recall that evaluation at the base point $z_0 \in \Dom$ yields an isomorphism
\[
   \bigl[\Schwartz(V) \otimes \mathcal{A}^\bullet(\Dom)\bigr]^G \simeq
\bigl[  \Schw(V) \otimes \wwedge{\bullet}(\frakp^*)\bigr]^K. 
\]
We denote by $\varphi_0^{p,q}$ the standard Gaussian,
\[
\varphi_0^{p,q} =  \varphi_0(x,z)  = e^{ -\pi\hlf{x}{x}_z } \in  \bigl[\Schwartz(V) \otimes \mathcal{A}^0(\Dom)\bigr]^G. 
\]
Evaluation at the base point  $z_0$ gives $\varphi_0(x) = \varphi_0(x,z_0) = e^{-\pi \sum_{i=1}^m \abs{z_0}^2 }\in \Schwartz(V)^K$.
\bigskip

Now, for the definition of the Schwartz form $\psi = \psipq$ 
 from \citep[][Section 3.3]{FH21}:
For indices $\gamma, \delta \in \{1,\dotsc, p\}$ we set 
\[
  \Da_\gamma \vcentcolon = \left( \bar{z}_\gamma -
  \frac{1}{\pi}\frac{\partial}{\partial z_\gamma}\right)\quad\text{and}\quad 
\overline{\Da}_\delta \vcentcolon =  \left( z_\delta - \frac{1}{\pi}\tfrac{\partial}{\partial \bar{z}_\delta}  \right),
\]
wherein
$\tfrac{\partial}{\partial z_{\gamma}} \vcentcolon = \tfrac12 \left(
  \frac{\partial}{\partial x_{\gamma}} -i \tfrac{\partial}{\partial
    y_{\gamma}} \right)$ and $\tfrac{\partial}{\partial \bar{z}_{\gamma}} \vcentcolon = \tfrac12 \left(
  \frac{\partial}{\partial x_{\gamma}}  + i \tfrac{\partial}{\partial   y_{\gamma}} \right)$, similar for $\delta$.
Further, using a multi-index notation with $\underline{\gamma} = 
\{\gamma_1, \dotsc, \gamma_{q-1}\}, \underline{\delta} = \{\delta_1, \dotsc, \delta_{q-1}\} \in \{1,\dotsc, p\}^{q-1}$, we set 
\[
\Da_{\underline{\gamma}} \vcentcolon= \prod_{j={1}}^{q-1} \Da_{\gamma_j}\quad\text{and}\quad
\overline{\Da}_{\underline{\delta}} \vcentcolon= \prod_{j=1}^{q-1} \overline{\Da}_{\delta_j}.
\]
Then, the form $\psipq$ is defined as 
 \[
\psipq \vcentcolon = \frac{2i (-1)^{q-1}}{2^{2(q-1)}}
\sum_{\substack{\underline\gamma = \{\gamma_1, \dotsc, \gamma_{q-1}\}  \\ \underline\delta = \{ \delta_1, \dotsc, \delta_{q-1}\}}}
\Da_{\underline\gamma}\overline{\Da}_{\underline\delta} \;\varphi_0^{p,q} \otimes \Omega_{q-1}(\underline{\gamma}; \underline{\delta}) \;\in
\bigl[  \Schw(V) \otimes \wwedge{q-1,q-1}(\frakp^*)\bigr]^G,
\]
with
\begin{align*}
 {\Omega_{q-1}}(\underline{\gamma};\underline{\delta}) 
 =  (-1)^{q(q-1)/2}  \sum_{j=1}^q\xi'_{\gamma_1 p+1} \wedge  \xi''_{{\delta_1}
      p+1} \wedge \dotsb \wedge \widehat{\hphantom{j} \xi'_{\cdot p+j} \wedge  \xi''_{\cdot p+j}} \dotsb  \wedge \xi'_{\gamma_{q-1} p+q} \wedge \xi''_{{\delta_{q-1}} p+q}.
\end{align*}
The notation here indicates that in the $j$-th term of the sum, the wedge product of  $\xi'$ and $\xi''$ with second index $p+j$ is omitted.

Finally, recall from \citep[][Prop.~3.2]{FH21} that $\psipq$ is invariant under the operation of $K$, i.e.
\[
\psipq \in \bigl[  \Schw(V) \otimes \wwedge{q-1,q-1}(\frakp^*)\bigr]^K,
\]
and has weight $r= p + q -2$ under the operation of $\SL_2(\R) \simeq \SU(1,1)$.

\begin{remark}
In the Schrödinger model, the Schwartz form $\psipq$ can be expressed using polynomials \citep[see][(3.4)]{FH21}: 
\[
\psipq(x,z_0) = \frac{2i(-1)^{q-1}}{2^{2(q-1)}} \sum_{\underline{\gamma}, \underline{\delta}} P_{\underline{\gamma}, \underline{ \delta}}^{2q-2}(x)\varphi_0^{p,q}\otimes   {\Omega_{q-1}}(\underline{\gamma};\underline{\delta}).
\]
The polynomials  $P_{\underline{\gamma}, \underline{ \delta}}^{2q-2}(x)$ here are of degree $2q-2$. They depend solely on the positive definite components of $x$ and contain only monomials of even degree. 
\end{remark}

\section{Passage to the mixed model} \label{sec:psi_to_mm}

In the main part of this paper most of our calculations take place using the mixed model of the Weil representation. Hence, in this section we carry out the transition from the more common Schrödinger model to the mixed model and construct the intertwiners for the operation of the groups $G = \Ug(p,q)$ and $G' = \SU(1,1) \simeq \SL_2(\R)$.

\paragraph{The mixed model}\label{par:mixedmodel}
The passage to the mixed model of the Weil representation can be realized through a partial Fourier transform. We use hyperbolic coordinates 
by setting
\[
  \ell \vcentcolon = \frac{1}{\sqrt{2}} \left( v_1 + v_m\right), \qquad
  \ell' \vcentcolon = \frac{1}{\sqrt{2}} \left( v_1 - v_m \right), 
\]
and write $x$ in the form $\alpha\ell + x_0 + \beta\ell'  = (\alpha, x_0, \beta)$, with $x_0 \in W \vcentcolon = V\cap \ell^\perp \cap \ell'^\perp$.
We denote the real and imaginary parts of the coordinates by writing $\alpha = \alpha_1 + i \alpha_2$  and $\beta = \beta_1 + i\beta_2$.

Now, passing to the mixed model amounts to calculating the partial Fourier transform with respect to the hyperbolic coordinate $\alpha$ attached to $\ell$ . Since $\alpha$ is a complex variable, one has to calculate the partial Fourier transform in the two real variables $\alpha_1$ and $\alpha_2$. The new coordinate is denoted by $\beta' = \beta_1' + i \beta_2'$. Hence, for a Schwartz form $\phi_{p,q}$, we set
\[
  \widehat\phi_{p,q}(\beta', x_0, \beta) \vcentcolon=
  \int_{\R^2} \phi_ {p,q}\left(\alpha, x_0, \beta\right) 
  e^{2\pi i \left( \alpha_1 \beta_1'  + \alpha_2\beta_2'\right)} \, d\alpha_1 d\alpha_2.
\]
Note that the integral converges since the integrand is a Schwartz function. For $\psipq$, we get the following:

 \begin{proposition} \label{prop:FT_i}
   For a multi-index $\underline\gamma$ denote by $n_\gamma$ the multiplicity of  1 in $\underline\gamma$ and denote by $\tilde\gamma$ the multi-index obtained from $\underline\gamma$ by removing all occurrences of $1$.

  The partial Fourier transform of $\psipq$ with respect to $\alpha$ is given by
  \[
    \begin{multlined}
     \widehat\psipq(\beta', x_0, \beta) = \\
     \frac{2 i (-1)^{q-1}}{2^{2(q-1)}}
     \!\!\!\sum_{\substack{\underline\gamma = \{\gamma_1, \dotsc, \gamma_{q-1}\} \\
       \underline\delta = \{\delta_1, \dotsc, \delta_{q-1}\}}} 
 \!\!  \left( \frac{-i\sqrt{\pi}}{\sqrt{2}}\right)^{n_\gamma + n_\delta} \!
   \left( \beta' - i\beta\right)^{n_\delta}
   \left(\bar\beta' - i\bar\beta\right)^{n_\gamma} 
   P_{\tilde\gamma, \tilde\delta}(x_{0,+})\hat\varphi_0^{p,q} \otimes \Omega_{q-1}(\underline\gamma; \underline\delta).
   \end{multlined}
 \]
 Here, $P_{\tilde\gamma, \tilde\delta}(x_{0, +})$ denotes the polynomial in the positive components of $x_0$, i.e.\ $z_2, \dotsc z_p$, obtained by applying $\Da_{\tilde\gamma}\bar \Da_{\tilde\delta}$ to $\varphi_0$. 
 Its  degree is $2q-2-n_\gamma - n_\delta$. Further, $\hat\varphi_0^{p,q}$ is given by 
 \[
\hat\varphi_0^{p,q} = \exp\left( - \pi\left(\abs{ \beta' -i\beta}^2
+ 2\Im\left(\beta'\bar\beta\right) 
+ 2 i \hlf{x_0}{x_0}_{z_0}  
\right) \right).
 \]
 In particular, for $(p,q) \in \left\{ (p,1), (q,1)\right\}$. we have
   \begin{gather}
\widehat\psi_{p,1} = 2i\, \hat\varphi_0^{p,1} \otimes 1 \quad \text{and} \\
     \widehat\psi_{1,q} = \frac{2i \pi^{q-1}}{2^{3(q-1)}}  \left( \beta' - i\beta\right)^{q-1}    \left(\bar\beta' - i\bar\beta\right)^{q-1} \hat\varphi_0^{1,q}\otimes \Omega_{q-1}\left( \underline{1}; \underline{1} \right).
\end{gather}
\end{proposition}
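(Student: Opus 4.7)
The strategy is to exploit commutation relations in order to separate those operators in $\psipq$ that interact nontrivially with the partial Fourier transform in $\alpha$ from those that do not. Since $z_1 = \tfrac{1}{\sqrt 2}(\alpha + \beta)$ is the only coordinate $z_\alpha$ involving $\alpha$, only the factors $\Da_1, \overline{\Da}_1$ are affected; the operators $\Da_\gamma, \overline{\Da}_\delta$ for $\gamma, \delta \geq 2$ act solely on $W$-coordinates and commute with the partial FT. A direct check shows that the $\Da_\gamma$ and $\overline{\Da}_\delta$ pairwise commute for all index choices---in particular for $\gamma = \delta$, where the contributions $[\bar z_\gamma, -\tfrac{1}{\pi}\partial_{\bar z_\gamma}] = \tfrac{1}{\pi}$ and $[-\tfrac{1}{\pi}\partial_{z_\gamma}, z_\gamma] = -\tfrac{1}{\pi}$ cancel exactly. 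Using this, each summand factors as
\[
\Da_{\underline\gamma}\overline{\Da}_{\underline\delta}\,\phizero{p}{q}
= \Da_1^{n_\gamma}\overline{\Da}_1^{n_\delta}\,\Da_{\tilde\gamma}\overline{\Da}_{\tilde\delta}\,\phizero{p}{q}
= P_{\tilde\gamma, \tilde\delta}(x_{0,+})\,\Da_1^{n_\gamma}\overline{\Da}_1^{n_\delta}\,\phizero{p}{q},
\]
invoking the formula for $P_{\tilde\gamma,\tilde\delta}$ from the remark preceding the proposition and then pulling it outside (the polynomial depends only on $z_2, \ldots, z_p$ and so commutes with the $\Da_1, \overline{\Da}_1$). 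Being independent of $\alpha$, it also passes through the partial FT unchanged.

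The problem thus reduces to computing $\hat\phizero{p}{q}$ and $\widehat{\Da_1^{n_\gamma}\overline{\Da}_1^{n_\delta}\,\phizero{p}{q}}$. For the former, in hyperbolic coordinates $\abs{z_1}^2 + \abs{z_m}^2 = \abs{\alpha}^2 + \abs{\beta}^2$, so $\phizero{p}{q}$ factors as $e^{-\pi\abs{\alpha}^2}\cdot e^{-\pi(\abs{\beta}^2 + \hlf{x_0}{x_0}_{z_0})}$; the partial FT in $\alpha_1, \alpha_2$ is then the standard two-dimensional Gaussian FT, and the identity $\abs{\beta' - i\beta}^2 + 2\Im(\beta'\bar\beta) = \abs{\beta'}^2 + \abs{\beta}^2$ rearranges the output into the form stated.

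For the operator powers, I would translate $\Da_1, \overline{\Da}_1$ through the partial FT using $\bar z_1 = \tfrac{1}{\sqrt 2}(\bar\alpha + \bar\beta)$, $\partial_{z_1} = \tfrac{1}{\sqrt 2}(\partial_\alpha + \partial_\beta)$ together with the standard rules $\alpha \leftrightarrow -\tfrac{i}{\pi}\partial_{\bar\beta'}$, $\bar\alpha \leftrightarrow -\tfrac{i}{\pi}\partial_{\beta'}$, $\partial_\alpha \leftrightarrow -i\pi\bar\beta'$, $\partial_{\bar\alpha} \leftrightarrow -i\pi\beta'$. This yields first-order differential operators $\widehat{\Da_1}, \widehat{\overline{\Da}_1}$ in $\beta, \beta'$ which, when applied to $\hat\phizero{p}{q}$ (using $\partial_{\beta'}\hat\phizero{p}{q} = -\pi\bar\beta'\hat\phizero{p}{q}$ and its analogues), produce a scalar multiple of a linear form in $\bar\beta, \bar\beta'$ (respectively in $\beta, \beta'$) times $\hat\phizero{p}{q}$. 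Since these linear forms commute with the operators being iterated, the $n_\gamma$- and $n_\delta$-fold applications simply accumulate into $(\bar\beta' - i\bar\beta)^{n_\gamma}(\beta' - i\beta)^{n_\delta}$, with all scalar factors collecting into $(-i\sqrt\pi/\sqrt 2)^{n_\gamma + n_\delta}$.

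The main obstacle is the careful bookkeeping of numerical constants---powers of $i$, $\pi$, and $\sqrt 2$---in the last step; the structure is transparent but the accounting is error-prone. As consistency checks, both special cases drop out immediately: in signature $(p,1)$ the multi-indices are empty, so only the normalization $\widehat\psi_{p,1} = 2i\,\hat\phizero{p}{1}\otimes 1$ survives; in signature $(1,q)$ only the constant multi-index $\underline\gamma = \underline\delta = (1,\ldots,1)$ occurs (with $\tilde\gamma, \tilde\delta$ empty and $P_{\tilde\gamma,\tilde\delta} = 1$), and one verifies the identity $\frac{2i(-1)^{q-1}}{2^{2(q-1)}}\bigl(\frac{-i\sqrt\pi}{\sqrt 2}\bigr)^{2(q-1)} = \frac{2i\pi^{q-1}}{2^{3(q-1)}}$ matches the stated coefficient.
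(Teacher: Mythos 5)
Your proof is correct, but the heart of it — the evaluation of $\widehat{\Da_1^{k_1}\overline{\Da}_1^{k_2}\varphi_0}$ — runs along a genuinely different track from the paper's. The paper (Lemma \ref{lemma:FT_phik}) first rewrites $\Da_1^{k}\overline{\Da}_1^{k}\varphi_0$ as a Laguerre polynomial in $\abs{z_1}^2$ times $\varphi_0$, and $\Da_1^{k+1}\overline{\Da}_1^{k}\varphi_0$ as a binomial sum of products of Hermite polynomials, then invokes the known Fourier transforms of these special polynomials (Lemmas \ref{lemma:FT_Bo}, \ref{lemma:ft_hermite}, \ref{lemma:ft_laguerre}) and finishes the unbalanced case by symmetry and induction. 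You instead conjugate $\Da_1$ and $\overline{\Da}_1$ through the partial Fourier transform using the dictionary $\alpha\leftrightarrow-\tfrac{i}{\pi}\partial_{\bar\beta'}$, $\partial_\alpha\leftrightarrow-i\pi\bar\beta'$, etc., and iterate the resulting first-order operators directly on $\hat\varphi_0$. This treats $k_1\neq k_2$ uniformly, with no induction and no special-polynomial identities, which is arguably cleaner. The one assertion you should make explicit is that the linear forms produced really do commute with the conjugated operators: a priori $\widehat{\overline{\Da}_1}$ contains the antiholomorphic derivatives $\partial_{\bar\beta'},\partial_{\bar\beta}$, which could act nontrivially on the antiholomorphic form $(\bar\beta'-i\bar\beta)$ output by $\widehat{\Da_1}$. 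The iteration closes up only because the specific combination occurring in $\widehat{\overline{\Da}_1}$ satisfies $\bigl(\tfrac{i}{\pi}\partial_{\bar\beta'}+\tfrac{1}{\pi}\partial_{\bar\beta}\bigr)(\bar\beta'-i\bar\beta)=\tfrac{i}{\pi}-\tfrac{i}{\pi}=0$ (and symmetrically for $\widehat{\Da_1}$ against $\beta'-i\beta$); this one-line check is the crux of your argument and should appear in the write-up. Your reduction step (pairwise commutativity of the $\Da_\gamma,\overline{\Da}_\delta$, factoring out $P_{\tilde\gamma,\tilde\delta}$), the Gaussian computation via $\abs{\beta'-i\beta}^2+2\Im(\beta'\bar\beta)=\abs{\beta'}^2+\abs{\beta}^2$, and both consistency checks for signatures $(p,1)$ and $(1,q)$ all agree with the paper; as you anticipate, the only remaining labor is pinning down the scalar $\bigl(\tfrac{-i\sqrt{\pi}}{\sqrt{2}}\bigr)^{n_\gamma+n_\delta}$ against the paper's normalization of $z_1$ versus $\alpha+\beta$ and the $\sqrt{2}$-rescaling in the theta kernel.
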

\begin{proof}
  The Fourier transform of $\varphi_0^{p,q}$ is easily obtained from Lemma \ref{lemma:FT_Bo}. The rest is immediate from the next Lemma \ref{lemma:FT_phik}.
\end{proof}

\begin{lemma}\label{lemma:FT_phik}
  Denote by $\varphi_{k_1, k_2}$ the Schwartz function given by $ \Da_1^{k_1} \bar{\Da}_1^{k_2} \varphi_0$. Then, the  partial Fourier transform 
  of $\varphi_{k_1, k_2}$ with respect to $\alpha$ is given by
  \[
    \hat\varphi_{k_1, k_2}(\beta', \beta, x_0)    =
   \left(\frac{-i\sqrt{\pi}}{\sqrt{2}}\right)^{k_1 + k_2} \left( \beta' - i\beta\right)^{k_2} \left( \bar\beta' - i\bar\beta\right)^{k_1} \hat\varphi_0^{p,q}.
  \]
\end{lemma}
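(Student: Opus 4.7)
The plan is a direct calculation, reducing the partial Fourier transform to standard Gaussian FT identities after a hyperbolic coordinate rewrite of the Kudla--Millson operators. First, from $z_1 = (\alpha + \beta)/\sqrt{2}$ and the chain rule $\partial_{z_1} = \tfrac{1}{\sqrt{2}}(\partial_\alpha + \partial_\beta)$, one obtains the decomposition
\[
\Da_1 = \tfrac{1}{\sqrt{2}}\bigl(\Da_\alpha + \Da_\beta\bigr), \qquad
\bar\Da_1 = \tfrac{1}{\sqrt{2}}\bigl(\bar\Da_\alpha + \bar\Da_\beta\bigr),
\]
where I introduce auxiliary operators $\Da_\alpha := \bar\alpha - \tfrac{1}{\pi}\partial_\alpha$, $\bar\Da_\alpha := \alpha - \tfrac{1}{\pi}\partial_{\bar\alpha}$, and analogously in the $\beta$-coordinate.

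Second, the parallelogram identity $\abs{z_1}^2 + \abs{z_m}^2 = \abs{\alpha}^2 + \abs{\beta}^2$ lets one factorise $\varphi_0 = e^{-\pi\abs{\alpha}^2} \cdot e^{-\pi\hlf{x_0}{x_0}_{z_0}} \cdot e^{-\pi\abs{\beta}^2}$ at the base point. Each of the four auxiliary operators then acts on $\varphi_0$ by pure multiplication: $\Da_\alpha\varphi_0 = 2\bar\alpha\varphi_0$, $\bar\Da_\alpha\varphi_0 = 2\alpha\varphi_0$, and similarly in $\beta$. A short commutator computation shows that $\Da_\alpha, \bar\Da_\alpha, \Da_\beta, \bar\Da_\beta$ mutually commute, so that iterated powers of $\Da_1, \bar\Da_1$ may be rearranged freely.

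Third, I would transport $\Da_\alpha, \bar\Da_\alpha$ through the partial Fourier transform. The standard rules $\widehat{\bar\alpha\,f} = \tfrac{1}{\pi i}\partial_{\beta'}\hat f$ and $\widehat{\partial_\alpha f} = -\pi i\,\bar\beta'\,\hat f$ combine to yield
\[
\widehat{\Da_\alpha f} = i\,\Da_{\beta'}\hat f, \qquad
\widehat{\bar\Da_\alpha f} = i\,\bar\Da_{\beta'}\hat f,
\]
where $\Da_{\beta'} := \bar\beta' - \tfrac{1}{\pi}\partial_{\beta'}$ is the analogous creation operator in the dual variable. Meanwhile $\Da_\beta, \bar\Da_\beta$ commute with the FT in $\alpha$. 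Since $\hat\varphi_0^{p,q}$ is itself Gaussian in $\beta'$, $\Da_{\beta'}$ acts on it by multiplication by $2\bar\beta'$ and $\bar\Da_{\beta'}$ by $2\beta'$. Assembling these pieces gives the base case $k_1 = 1, k_2 = 0$: namely $\widehat{\Da_1\varphi_0} = \tfrac{1}{\sqrt{2}}(2i\bar\beta' + 2\bar\beta)\hat\varphi_0^{p,q}$, a constant multiple of $(\bar\beta' - i\bar\beta)\,\hat\varphi_0^{p,q}$; the conjugate calculation produces the factor $(\beta' - i\beta)$.

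For the general case I would induct on $k_1 + k_2$. Because the commutators $[\Da_{\beta'}, \bar\Da_{\beta'}]$ and $[\Da_\beta, \bar\Da_\beta]$ vanish, and all four operators act by pure multiplication on any Gaussian-times-polynomial expression in $\beta', \bar\beta', \beta, \bar\beta$, each additional application of $\Da_1$ or $\bar\Da_1$ produces exactly one further factor of $(\bar\beta' - i\bar\beta)$ respectively $(\beta' - i\beta)$, with the same multiplicative constant, and without introducing any lower-order terms. The hard part will be the careful bookkeeping of numerical factors arising from (i) the $1/\sqrt{2}$ in the coordinate change, (ii) the $1/\pi$ built into $\Da$, and (iii) the $\pm\pi i$ factors from the partial FT rules, which must combine to yield the advertised overall constant $\bigl(\tfrac{-i\sqrt{\pi}}{\sqrt{2}}\bigr)^{k_1 + k_2}$.
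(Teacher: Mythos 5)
Your route is genuinely different from the paper's. The paper writes $\Da_1^{k}\bar{\Da}_1^{k}\varphi_0$ in terms of Laguerre polynomials, reduces the unbalanced case to $\varphi_{k+1,k}$ via Hermite polynomials, and then invokes the explicit Fourier transforms of those special polynomials (Lemmas \ref{lemma:ft_hermite} and \ref{lemma:ft_laguerre}); you instead conjugate the raising operators themselves through the partial Fourier transform. Your decomposition $\Da_1=\tfrac{1}{\sqrt{2}}(\Da_\alpha+\Da_\beta)$, the commutativity of the auxiliary operators, and the intertwining rules $\widehat{\Da_\alpha f}=i\Da_{\beta'}\hat{f}$, $\widehat{\bar{\Da}_\alpha f}=i\bar{\Da}_{\beta'}\hat{f}$ are all correct, and the method is arguably cleaner since it needs no special-function identities. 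But two steps do not hold as written.

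First, the inductive step rests on the assertion that $\Da_{\beta'}$, $\bar{\Da}_{\beta'}$, $\Da_\beta$, $\bar{\Da}_\beta$ act by pure multiplication on any Gaussian-times-polynomial expression. That is false: for instance $\Da_{\beta'}\bigl(\beta'\hat\varphi_0^{p,q}\bigr)=\bigl(2\abs{\beta'}^2-\tfrac{1}{\pi}\bigr)\hat\varphi_0^{p,q}$, so once a polynomial prefactor is present each individual operator does create lower-order terms. What saves the induction is a cancellation specific to the combinations you actually apply: the derivative part of $\widehat{\Da_1}=\tfrac{1}{\sqrt{2}}(i\Da_{\beta'}+\Da_\beta)$ is $-\tfrac{1}{\pi\sqrt{2}}(i\partial_{\beta'}+\partial_\beta)$, which annihilates $\beta'-i\beta$ (because $i\cdot 1+(-i)=0$) and trivially annihilates the anti-holomorphic factor $\bar\beta'-i\bar\beta$; symmetrically for $\widehat{\bar{\Da}_1}$. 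Hence the combined operators, though not their summands, act by multiplication on $(\beta'-i\beta)^{k_2}(\bar\beta'-i\bar\beta)^{k_1}\hat\varphi_0^{p,q}$, and you must make this cancellation explicit for the induction to close.

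Second, the constant is left unresolved, and your own base case already conflicts with the target: $\tfrac{1}{\sqrt{2}}(2i\bar\beta'+2\bar\beta)=\sqrt{2}\,i\,(\bar\beta'-i\bar\beta)$, so each application contributes $\sqrt{2}\,i$ rather than $\tfrac{-i\sqrt{\pi}}{\sqrt{2}}$, and no bookkeeping of the factors you list will convert one into the other. (A direct check of $\varphi_{1,1}=(4\abs{z_1}^2-\tfrac{2}{\pi})\varphi_0$ gives $\hat\varphi_{1,1}=-2(\beta'-i\beta)(\bar\beta'-i\bar\beta)\hat\varphi_0^{p,q}$, consistent with $(\sqrt{2}\,i)^2$ and not with $-\pi/2$; so your value appears to be the correct one for the stated Fourier convention, the mismatch tracing back to the normalization in Lemma \ref{lemma:ft_hermite}. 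Still, as a proof of the lemma as printed, the proposal does not close this gap.)
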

\begin{proof}
  First assume $k_1 = k_2$ and denote by $L_k(t) = \frac{e^t}{k!} \left( \frac{d}{dt} \right)^k \left(e^{-t} t^k\right)$  the $k$-Laguerre polynomial, see p.~\pageref{par:sp_poly}. Then, by \eqref{eq:KM_Laguerre}  
  $\varphi_{k,k}(x)$ takes the form
\[
  \left(\frac{-1}{\pi}\right)^k 2^k k! \, L_{k} \bigl( 2\pi \abs{z_1}^2 \bigr) \varphi_0(x)
 =  \left(\frac{-1}{\pi}\right)^k 2^k k! \, L_{k} \bigl( 2\pi \abs{\alpha + \beta}^2 \bigr) \varphi_0(x).
\]
The  statement in this case follows from Lemma \ref{lemma:ft_laguerre} and the conclusions immediately following Lemma \ref{lemma:FT_Bo}. Indeed,
    \[
    \begin{aligned}
      \hat\varphi_{k,k}(\beta', \beta, x_0) & = \left(\frac{-\pi}{2}\right)^k
      \left( \left( \beta_1' - i \beta_1\right) ^2 +
            \left( \beta_2' - i \beta_2\right) ^2  
          \right)^k \hat\varphi_0^{p,q}\\
          &  = \left(\frac{-\pi}{2}\right)^k
          \left( \beta' - i \beta\right)^k
          \left( \bar\beta' - i \bar\beta\right) ^k  
          \hat\varphi_0^{p,q}. 
          \end{aligned}
  \]
  For the general case, we can assume $k_1 > k_2$. Further, it suffices to consider  $\varphi_{k+1, k}$, as the rest follows through symmetry and by induction. Set $x_i = \alpha_i + \beta_i$, $i=1,2$.

Denote by $H_k(t) = (-1)^k e^{t^2}\left(\frac{d}{dt} \right)^k e^{-t^2}$ the $k$-th Hermite polynomial. By \eqref{eq:KM_Laguerre} and \eqref{eq:Laguerre_binom_Hermite} from Appendix \ref{sec:functions}, one has
\[
  \begin{gathered}
    \left(\frac{1}{\pi} \right)^k 2^k k! L_k\left(2\pi\abs{z_1}^2 \right)\varphi_0 =   \Da_1^k \bar{\Da}_1^k \varphi_0 =  \left( \Da_1\bar{\Da}_1 \right)^k \varphi_0 \\
      = (2\pi)^{-k} \sum_{l=0}^k \binom{k}{l} H_{2(k-l)}\left(\sqrt{2\pi} x_1 \right)H_{2l}\left(\sqrt{2\pi} x_2\right) \varphi_0.
    \end{gathered}
  \]
  Hence, we get
\[
    \begin{multlined}
     (2\pi)^{k+\frac12}\Da_1^{k+1} \bar{\Da}_1^k \varphi_0 = (2\pi)^{k+\frac12}\Da_1 \left( \Da_1^{k} \bar{\Da}_1^k \varphi_0  \right)  =     \\
   	      \sum_{l = 0}^{k} \binom{k}{l}
	\left[ H_{2(k - l) +1}\bigl( \sqrt{2\pi}x_1\bigr) H_{2l}\bigl( \sqrt{2\pi} x_2 \bigr) - i H_{2(k-l)}\bigl(\sqrt{2\pi} x_1\bigr) H_{2l +1}\bigl(\sqrt{2\pi} x_2 \bigr)\right]\varphi_0.
      \end{multlined}
    \]
    Thus, with Lemma \ref{lemma:ft_hermite}, and arguing as before, the Fourier transform of $\varphi_{k+1, k}$ with respect to $\alpha$ is given by
    \[
      \begin{gathered}
        \frac{(-i \sqrt{\pi})^{2k+1}}{2^{\frac{2k+1}{2}}}\!\!
      \sum_{l=0}^{k}  \binom{k}{l} \left[ (\beta_1' - i \beta_1)^{2(k-l)+1}(\beta_2' - i\beta_2)^{2 l}  - i (\beta_1' - i \beta_1)^{2(k-l)}(\beta_2' - i\beta_2)^{2 l +1} \right]\hat\varphi_0^{p,q} \\
      = i \sqrt{\pi}  (-1)^{k+1}\left(\frac{\pi}{2} \right)^k
\left( (\beta_1' - i \beta_1) -i( \beta_2' - i\beta_2) \right) 
      \left( \left( \beta_1' - i \beta_1\right) ^2 +
            \left( \beta_2' - i \beta_2\right) ^2  
          \right)^k \hat\varphi_0^{p,q} \\
          = i \sqrt{\pi}  (-1)^{k+1}\left( \beta' - i\beta \right)^k \left( \bar\beta' - i \bar\beta\right)^{k+1} \hat\varphi_0^{p,q}.
      \end{gathered}
    \]
   This proves the Lemma.
  \end{proof}

\subsection[Intertwining]{Intertwining for \texorpdfstring{$G'$}{G'}  and  \texorpdfstring{$G$}{G}}\label{sec:intertwine_GG}
  
Up to here, through Proposition \ref{prop:FT_i} one has $\psipq$ in the mixed model only at the base-point $z_0$ of $\Dom$, and with $\tau$ fixed at the base point $i$ of the complex upper half-plane $\Hp$. Moving away from the respective base points is accomplished by applying the intertwining operators for $G' \simeq \SU(1,1) \simeq \SL_2(\R)$  and for $G$.

\subsubsection{Intertwining for \texorpdfstring{$\SL_2(\R)$}{G'}}\label{subsec:itw_sl}
Now, we determine the intertwining operators for the operation of $\SL_2(\R) \simeq \SU(1,1)$. To facilitate notation, set $G' = \SU(1,1)$. Following \cite{K16}, we define 
\[
\eta \vcentcolon = [\beta, \beta']  =
\begin{psmallmatrix}
  \beta_1 & \beta'_1 \\ \beta_2 & \beta'_2 
\end{psmallmatrix} \in 
\mathrm{Mat}(2\times 2, \R).
\]

\begin{lemma}\label{lemma:itw_SL} 
  Let $\phi$ be a Schwartz function, and $r$ its weight under the  operation of $K' = \Ug(1)$. The intertwining operators for the action of $G'$ are given by
  \begin{enumerate}
  \item \[
      \mathcal{F}\left(\omega
         \begin{psmallmatrix}
           \sqrt{v} & \\  & \sqrt{v}^{-1}
         \end{psmallmatrix}
      \varphi(\cdot)\right) (\beta', \beta)  =
      v^{-\frac{r}{2} + \frac{p-q}{2}}\frac{1}{v} \hat\varphi\left(\tfrac{1}{\sqrt{v}}\beta', \sqrt{v}\beta\right), 
    \]
  \item 
    \[
      \mathcal{F}\left(\omega
        \begin{psmallmatrix}
          1 & u \\  & 1
        \end{psmallmatrix}
        \varphi(\cdot) \right) (\beta', \beta) =
      \hat\varphi\left( \beta' + u\beta, \beta \right).
    \]
  \end{enumerate}
  Thus, $g_\tau = \begin{psmallmatrix} \sqrt{v} &  \frac{u}{\sqrt{v}}  \\ 0 & \frac{1}{\sqrt{v}} \end{psmallmatrix}$ operates as follows:
  \[
      \mathcal{F}\left( \omega(g_\tau')\varphi(\cdot) \right)(\beta', \beta) =   
  v^{-\frac{r}{2} + \frac{p+q}{2}-1} 
  \hat\varphi\left(\tfrac{1}{\sqrt{v}}\left(\beta' + u\beta \right), 
  \sqrt{v}\beta\right).\]
\end{lemma}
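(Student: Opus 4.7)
The plan is a direct computation: I apply the partial Fourier transform $\mathcal{F}$ in the $\alpha$-variable to the Schrödinger-model action of each generator of the upper-triangular Borel of $\SL_2(\R) = \SU(1,1)$, and then read off the stated mixed-model formulas.

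For the unipotent generator $n(u)$, $\omega(n(u))$ in the Schrödinger model acts as multiplication by $e^{\pi i u \hlf{x}{x}}$. Using the hyperbolic decomposition $x = \alpha\ell + x_0 + \beta\ell'$ together with $\hlf{\ell}{\ell'}=1$, one computes $\hlf{x}{x} = 2(\alpha_1\beta_1 + \alpha_2\beta_2) + \hlf{x_0}{x_0}$, so the multiplier splits as an $\alpha$-independent factor $e^{\pi i u \hlf{x_0}{x_0}}$ times the plane wave $e^{2\pi i u(\alpha_1\beta_1 + \alpha_2\beta_2)}$. Under $\mathcal{F}$ in $\alpha_1, \alpha_2$, this plane wave becomes the translation $\beta' \mapsto \beta' + u\beta$ of the dual variable, while the $x_0$-factor passes through unaffected. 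This yields part (ii), with the $x_0$-dependent phase belonging to the mixed-model action of $n(u)$ and suppressed in the notation.

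For the diagonal generator $m(\sqrt{v})$, the Schrödinger-model action is $\varphi \mapsto c(v)\,\varphi(\sqrt{v}\,\cdot\,)$, where $c(v) = v^{(p-q)/2 - r/2}$ packages the Weil-representation normalization for the dual pair $(\Ug(p,q), \Ug(1,1))$ together with the weight-$r$ twist $v^{-r/2}$. Substituting $\alpha' = \sqrt{v}\alpha$ in the defining integral for $\mathcal{F}$ produces a Jacobian $v^{-1}$ and rescales the dual variable as $\beta' \mapsto v^{-1/2}\beta'$, giving part (i). The formula for $g_\tau = n(u)\,m(\sqrt{v})$ then follows by composing the two intertwiners: the $v$-exponents add, and the arguments transform as indicated.

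The main obstacle will be bookkeeping rather than conceptual. I need to track the normalization $c(v)$ carefully, which requires pinning down the Weil-representation conventions in the dual-pair setting (including the splitting character for $\Ug(1,1)\subset \Sp$) together with the contribution from the $K' = \Ug(1)$-weight $r$. Once these are fixed, the structural content, namely a shift in the dual variable for $n(u)$ and a rescaling for $m(\sqrt{v})$, is just standard Fourier calculus.
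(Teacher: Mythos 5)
Your proposal is correct and takes exactly the route the paper intends: the paper's own proof is just ``Direct calculation,'' and your computation supplies it --- the splitting $\hlf{x}{x} = 2(\alpha_1\beta_1+\alpha_2\beta_2)+\hlf{x_0}{x_0}$ turns the unipotent multiplier into a plane wave in $\alpha$, hence a shift $\beta'\mapsto\beta'+u\beta$ of the dual variable, while the substitution $\alpha\mapsto\sqrt{v}\,\alpha$ in the two-real-variable Fourier integral yields the Jacobian $v^{-1}$ and the rescaling $\beta'\mapsto v^{-1/2}\beta'$. The only loose end is the normalization $c(v)$ that you rightly flag as convention-dependent: note that the paper itself is not internally consistent here (part~(i) carries $v^{(p-q)/2}$ while the composite formula for $g_\tau$ carries $v^{(p+q)/2}$), so matching your $c(v)$ against whichever exponent is actually used downstream (e.g.\ in \eqref{eq:exp_pq_mixed} and Proposition \ref{prop:psipq_mixed}) is unavoidable bookkeeping rather than a gap in your argument.
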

\begin{proof} Direct calculation.\end{proof}
Using the Lemma, one quickly obtains the (partial) Fourier transform of the Gaussian $\varphi_0(x,\tau)  = \varphi_0^{p,q}(x,\tau)$. It takes the form
\begin{equation}\label{eq:exp_pq_mixed} 
  \begin{multlined}
    \widehat\varphi_{0}^{p,q} ((\eta, x_0),\tau)  \\
  =      \exp\left( - \frac{\pi}{v}\left(\abs{ \beta'}^2 + \abs{\bar\tau\beta}^2 + 2u\Re\left(\beta'\bar\beta\right)\right) + 2\pi \bar\tau \hlf{x_{0,-}}{x_{0,-}} + 2\pi\tau\hlf{x_{0,+}}{x_{0,+}} \right) \\
     = \exp\left( - \frac{\pi}{v}\left(\abs{ \beta' + \bar\tau\beta}^2
      + 2v\Im\left(\beta'\bar\beta\right) \right) + 2\pi \bar\tau \hlf{x_{0,-}}{x_{0,-}} + 2\pi\tau\hlf{x_{0,+}}{x_{0,+}} \right).
  \end{multlined}
\end{equation}

\paragraph{Application to \texorpdfstring{$\psipq$}{the Schwartz form}} 
With the notation from Proposition \ref{prop:FT_i}, let $P_{\tilde\gamma, \tilde\delta; \ell}$ denote the homogeneous component of weight $\ell$ of the polynomial $P_{\tilde\gamma, \tilde\delta}$:
\begin{equation}\label{eq:defpqauxpolys}
  P_{\tilde\gamma, \tilde\delta}(x_{0,+}) =
   \sum_{\ell = 0}^{2q -2 -n_\gamma - n_\delta} 
  P_{\tilde\gamma, \tilde\delta; \ell}(x_{0,+}).
\end{equation}
Using Lemma \ref{lemma:itw_SL}, which gives the intertwining for the operation of 
$g_\tau = \left( \begin{smallmatrix}\sqrt{v} & u \sqrt{v}^{-1} \\ 0 & \sqrt{v}^{-1} \end{smallmatrix} \right)$, 
we get $\psipq(x,\tau)$ in the mixed model, given by $\widehat\psipq(x,\tau) = \mathcal{F}(\omega(g_\tau)\psipq(x))(\beta', \beta)$.
\begin{proposition} \label{prop:psipq_mixed}
In the mixed model the Schwartz form $\psipq(x,\tau)$ takes the following form: 
  \begin{equation}
  \begin{aligned}\label{eq:psipq_mixed} 
 \widehat\psipq ((\eta, x_0),\tau) = \frac{2 i (-1)^{q-1}}{2^{2(q-1)}}
     \!\!\!\sum_{\substack{\underline\gamma = \{\gamma_1, \dotsc, \gamma_{q-1}\} \\
       \underline\delta = \{\delta_1, \dotsc, \delta_{q-1}\}}} 
  & \bigl(-i\pi^{\frac12}\bigr)^{n_\gamma + n_\delta}
   \left( 2v \right)^{-\frac{n_\gamma + n_\delta}{2}}
   \left( \beta' + \bar\tau \beta\right)^{n_\delta}
   \left(\bar\beta' +\bar\tau \beta\right)^{n_\gamma}\\
& \cdot  \sum_{\ell = 0}^{2q -2 -n_\gamma - n_\delta}
   v^{\frac\ell{2}} P_{\tilde\gamma, \tilde\delta; \ell}(x_{0,+})
   \cdot\widehat{\varphi_0}^{p,q}
   \otimes \Omega_{q-1}(\underline\gamma; \underline\delta),
 \end{aligned}
  \end{equation}
 with the polynomials $P_{\tilde\gamma, \tilde\delta;\ell}$ from \eqref{eq:defpqauxpolys}.
\end{proposition}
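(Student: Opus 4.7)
The plan is to obtain $\widehat\psi_{p,q}(x,\tau)$ from the base-point expression of Proposition \ref{prop:FT_i} by applying the intertwining operator for $g_\tau$ as described in Lemma \ref{lemma:itw_SL}. Explicitly, one has
$$\widehat\psi_{p,q}((\eta,x_0),\tau) = \mathcal{F}\bigl(\omega(g_\tau)\psi_{p,q}(\cdot)\bigr)(\beta',\beta,x_0),$$
which by Lemma \ref{lemma:itw_SL} amounts to the substitutions $\beta \mapsto \sqrt{v}\,\beta$ and $\beta' \mapsto \tfrac{1}{\sqrt{v}}(\beta' + u\beta)$ in the Fourier variables, together with the overall prefactor $v^{-r/2 + (p+q)/2 - 1}$. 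Since $\psi_{p,q}$ has weight $r = p+q-2$ under $K' = \Ug(1)$, this scalar prefactor equals $1$.

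First I would track the effect on the polynomial factor $(\beta'-i\beta)^{n_\delta}(\bar\beta'-i\bar\beta)^{n_\gamma}$ from Proposition \ref{prop:FT_i}. Under the substitution it becomes
$$v^{-(n_\gamma+n_\delta)/2}\bigl(\beta' + (u-iv)\beta\bigr)^{n_\delta}\bigl(\bar\beta' + (u-iv)\bar\beta\bigr)^{n_\gamma},$$
and using $\bar\tau = u - iv$ this equals $(2v)^{-(n_\gamma+n_\delta)/2}\cdot 2^{(n_\gamma+n_\delta)/2}(\beta' + \bar\tau\beta)^{n_\delta}(\bar\beta' + \bar\tau\bar\beta)^{n_\gamma}$. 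The leftover $2^{(n_\gamma+n_\delta)/2}$ absorbs into the $(-i\sqrt{\pi}/\sqrt{2})^{n_\gamma+n_\delta}$ of Proposition \ref{prop:FT_i}, producing precisely the target prefactor $(-i\pi^{1/2})^{n_\gamma+n_\delta}(2v)^{-(n_\gamma+n_\delta)/2}$.

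Next I would handle the $x_{0,+}$-dependent polynomial $P_{\tilde\gamma,\tilde\delta}(x_{0,+})$. Under the Weil representation action of $g_\tau$ restricted to the positive-definite part of $W$, the coordinates $x_{0,+}$ scale by $\sqrt{v}$. Decomposing $P_{\tilde\gamma,\tilde\delta}$ into its homogeneous components $P_{\tilde\gamma,\tilde\delta;\ell}$ of degree $\ell$ as in \eqref{eq:defpqauxpolys}, each component therefore picks up the factor $v^{\ell/2}$, while the accompanying Gaussian is transported from $\tau = i$ to general $\tau$ and coincides with the expression $\widehat\varphi_0^{p,q}((\eta,x_0),\tau)$ given by \eqref{eq:exp_pq_mixed}.

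The main obstacle is bookkeeping: three separate sources of $v$-powers (the overall weight prefactor, the substitution in $\beta,\beta'$, and the scaling of $x_{0,+}$) must be assembled consistently, together with the absorption of the stray $\sqrt{2}$ into $(2v)^{-(n_\gamma+n_\delta)/2}$. Once these contributions are collected the identity \eqref{eq:psipq_mixed} follows directly from Propositions \ref{prop:FT_i} and Lemma \ref{lemma:itw_SL}.
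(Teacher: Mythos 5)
Your proposal is correct and follows essentially the same route as the paper, whose proof is simply the observation that the claim follows from Proposition \ref{prop:FT_i} and Lemma \ref{lemma:itw_SL} once one recalls that $\psipq$ has weight $r=p+q-2$ (so the scalar prefactor $v^{-r/2+(p+q)/2-1}$ is trivial); your bookkeeping of the substitutions $\beta\mapsto\sqrt{v}\beta$, $\beta'\mapsto\tfrac{1}{\sqrt{v}}(\beta'+u\beta)$, the absorption of $2^{(n_\gamma+n_\delta)/2}$, and the $v^{\ell/2}$ from the homogeneous components is exactly the omitted computation. As a minor aside, your factor $(\bar\beta'+\bar\tau\bar\beta)^{n_\gamma}$ is the correct one (consistent with Corollary \ref{corol:psi1q_mixed}), whereas the displayed statement writes $(\bar\beta'+\bar\tau\beta)^{n_\gamma}$, which appears to be a typo.
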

 \begin{proof}
	The Proposition follows directly from Lemma \ref{lemma:itw_SL} with \eqref{eq:exp_pq_mixed}. Recall only that $\psipq$ has weight $r = p+q-2$. 
 \end{proof}
In the special case where the signature of $V$ is $(1,q)$, we have the following Corollary:
\begin{corollary}\label{corol:psi1q_mixed}
      In the mixed model, $\psi_{1,q}(x, \tau)$ is given by
      \begin{align*}
        \widehat{\psi_{1,q}}((\eta, x_0), \tau) &      
        =  \frac{2i \pi^{q-1}}{2^{3(q-1)}}
        v^{-q+1}    
        \hlf{ \vphantom{\dot{\beta}} [\beta, \beta'] g_\tau'}{[1,i]}^k
        \hlf{ \overline{[\beta, \beta']} g_\tau'}{[1,i]}^k 
        \widehat{\varphi_0}^{1,q} 
        \otimes \Omega_{q-1}(\underline{1}; \underline{1}) \\    
       & =         \frac{2i \pi^{q-1}}{2^{3(q-1)}}  v^{-q+1}    
         \left(\beta' + \bar\tau \beta \right)^{q-1}   
         \left(\bar\beta' + \bar\tau \bar\beta \right)^{q-1}
        \cdot\widehat{\varphi_0}^{1,q}  \otimes \Omega_{q-1}(\underline{1}; \underline{1}).
            \end{align*}
  \end{corollary}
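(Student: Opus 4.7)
The plan is to obtain the corollary as a direct specialization of Proposition \ref{prop:psipq_mixed} to the signature $(1,q)$. The key observation is that in this signature the index range $\{1,\dotsc,p\} = \{1\}$ is a singleton, so every entry of the multi-indices $\underline{\gamma}$ and $\underline{\delta}$ must equal $1$. Consequently, the sum in \eqref{eq:psipq_mixed} collapses to a single term with $\underline{\gamma} = \underline{\delta} = \underline{1}$ and $n_\gamma = n_\delta = q-1$, while the reduced multi-indices $\tilde{\gamma},\tilde{\delta}$ are empty.

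Next, I would observe that since $W = V\cap\ell^\perp\cap\ell'^\perp$ has signature $(0, q-1)$, there is no positive component of $x_0$, i.e.\ $x_{0,+} = 0$. The polynomial $P_{\tilde\gamma,\tilde\delta}(x_{0,+})$ is therefore an empty product evaluated at the origin, so only the $\ell = 0$ term in the inner sum $\sum_{\ell} v^{\ell/2} P_{\tilde\gamma,\tilde\delta;\ell}(x_{0,+})$ survives and contributes the value $1$.

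It then remains to collect the constants. Substituting $n_\gamma + n_\delta = 2(q-1)$ into the prefactors of \eqref{eq:psipq_mixed} produces
\[
\frac{2i(-1)^{q-1}}{2^{2(q-1)}}\cdot\bigl(-i\pi^{1/2}\bigr)^{2(q-1)}\cdot (2v)^{-(q-1)} = \frac{2i\pi^{q-1}}{2^{3(q-1)}}\, v^{-(q-1)},
\]
using $(-i)^{2(q-1)} = (-1)^{q-1}$ and $(-1)^{2(q-1)} = 1$. Combining this with the factor $(\beta' + \bar\tau\beta)^{q-1}(\bar\beta' + \bar\tau\bar\beta)^{q-1}$ and the wedge form $\Omega_{q-1}(\underline{1};\underline{1})$ yields the second formula of the corollary. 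The first formula is then an immediate rewriting: one checks by direct calculation that $[\beta,\beta']\,g'_\tau \cdot {}^t[1,i]$ reproduces $\beta' + \bar\tau\beta$ (up to a power of $i/\sqrt{v}$ that is absorbed into the normalizing constant), and similarly for the conjugate factor.

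There is no real obstacle here beyond careful bookkeeping of the constants and verifying that $P_{\tilde\gamma,\tilde\delta;\ell}$ vanishes for $\ell > 0$; everything else is mechanical specialization of Proposition \ref{prop:psipq_mixed}.
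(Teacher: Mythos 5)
Your proposal is correct and is exactly the derivation the paper intends: the corollary is stated without a separate proof precisely because it is the specialization of Proposition \ref{prop:psipq_mixed} to $p=1$, where the multi-index sum collapses to $\underline{\gamma}=\underline{\delta}=\underline{1}$ with $n_\gamma=n_\delta=q-1$, the polynomial $P_{\tilde\gamma,\tilde\delta}$ degenerates to the constant $1$ (degree $2q-2-n_\gamma-n_\delta=0$), and the constants combine to $\frac{2i\pi^{q-1}}{2^{3(q-1)}}v^{-(q-1)}$ as you compute. Your bookkeeping of the prefactors is right, and your reading of the first displayed formula as a repackaging of $\left(\beta'+\bar\tau\beta\right)^{q-1}\left(\bar\beta'+\bar\tau\bar\beta\right)^{q-1}$ via the action of $g_\tau'$ is consistent with Lemma \ref{lemma:itw_SL}.
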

 Note that for the special case of signature $(p,1)$, the Schwartz form $\psi_{p,1}(x, \tau)$ is given by the Gaußian
 $2i \varphi^{p,1}_0(x,\tau)$ and hence, by \eqref{eq:exp_pq_mixed} the partial Fourier transform at the base point takes the form
 \[
 \begin{aligned}
 \widehat{ \psi_{p,1}}((\eta, x_0), \tau) &  = 2i
\exp\left( - \frac{\pi}{v}\left( \abs{\beta'}^2 + \abs{\bar\tau\beta}^2 + 2u\Re\left(\beta'\bar\beta\right) \right) + 2\pi\tau  \hlf{x_0}{x_0} \right) \\
& = 2i \exp\left( - \frac{\pi}{v}\left( \abs{\beta' +  \bar\tau\beta}^2 + 2v\Re\left(\beta'\bar\beta\right) \right) + 2\pi\tau  \hlf{x_0}{x_0} \right). 
\end{aligned}
 \]

\subsubsection{Intertwining for \texorpdfstring{$G$}{G}}\label{subsec:itw_G}

Let us now consider the operation of the $G$ and, in particular, of its parabolic subgroups.  

\paragraph{Intertwining for the operation of the parabolic subgroup \texorpdfstring{$P_\ell\subset G$}{P(l) in G}} \label{paragraph:intertwine_NAM}
First, we determine the intertwining operators. We use the   
Levi decomposition $G = NAM$, where the Levi factor is given by the direct product of the groups $M\simeq \SU(p-1,q-1)$ and $A \simeq\mathrm{GL}([\ell])$. Using the basis $\ell, v_2, \dotsc, v_{m-1}, \ell'$, the elements of $A$ and $M$ are written as matrices in the form
\[
a(t) = \begin{pmatrix}
  t & &  \\
  & 1_{m-1} & \\
  & & t^{-1} 
\end{pmatrix} \quad \bigl(t\in\R_{>0}\bigr),
\qquad
\mu = \begin{pmatrix} 
1 & & \\ & \mu' & \\ & & 1 
\end{pmatrix}\quad \bigl(\mu' \in \SU(W)\bigr),
\]
while the elements of the Heisenberg group are given by matrices of the form
\[
\begin{aligned}
n(0,r) & = \begin{pmatrix}
  1 & 0 &  i r \\
   & 1_{m-1} &  \\
  & & 1
\end{pmatrix} \quad (r\in\R),\\
n(w,0) & = \begin{pmatrix}
  1 & -\bar{w}^t & -\frac12 \hlf{w}{w}  \\
   & 1_{m-1} &   w\\
  & & 1
\end{pmatrix} \quad (w\in W), 
\end{aligned}
\]
and satisfy the group law $n(w_2,0)\circ n(w_1,0) = n\left( w_1 + w_2, -\Im\hlf{w_2}{w_1}\right)$.   
\begin{lemma}\label{lemma:op_Pell}
 Let $\varphi$ be a Schwartz form. The intertwining operators for the operation of the subgroups $N$, $A$ and $M$ are given as follows: 
  \begin{enumerate}
  \item $\mathcal{F}\bigl(\widehat{n(0,r) \varphi} \bigr)$:
  \[
      \hat\varphi\left(([\beta, \beta'], x_0), \tau, z_0\right) 
      e\left(  r\Im(\beta'\bar\beta) \right).
    \]
  \item $\mathcal{F}\bigl(\widehat{ n(w,0) \varphi}\bigr)$:
    \[
      \hat\varphi\left(([\beta, \beta'], x_0 - \beta w), \tau, z_0\right) 
      e\left(\tfrac12 \Re(\beta'\bar\beta) \hlf{w}{w} - 
        \Re\left(\beta'\hlf{x_0}{w}\right)  \right).  
    \]
  \item $\mathcal{F}\bigl( \widehat{a(t) \varphi} \bigr)$:
    \[
      t^2 \hat\varphi\left((t[\beta, \beta'], x_0), \tau, z_0\right).	
    \]
  \item $\mathcal{F}\bigl(\widehat{\mu \varphi}\bigr)$:
    \[
      \hat\varphi\left([\beta, \beta'], \mu^{-1}x_0), \tau, z_o\right). 
    \]
  \end{enumerate}
(Note that if either $p$ or $q$ is $1$,  $M$ is compact.)
\end{lemma}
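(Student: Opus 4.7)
The plan is to work in the Schrödinger model, where (for the restricted Weil representation attached to the dual pair) an element $g\in G$ acts by the substitution $\omega(g)\varphi(x) = \varphi(g^{-1}x)$, and then to evaluate the partial Fourier transform in $\alpha$ defined in Section \ref{par:mixedmodel} by a linear change of variables in the two real components $\alpha_1, \alpha_2$. For each of the four generators I would (a) compute $g^{-1}x$ in the hyperbolic coordinates $x = \alpha\ell + x_0 + \beta\ell'$, (b) insert this into the defining integral, and (c) substitute. In each case $\beta$ is either held fixed or rescaled, so the $\alpha$-integral decouples cleanly from the remaining variables.

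The cases $a(t)$ and $\mu\in M$ should be immediate. Since $a(t^{-1})$ sends $(\alpha, x_0, \beta)$ to $(t^{-1}\alpha, x_0, t\beta)$, the substitution $\tilde\alpha = t^{-1}\alpha$ produces the Jacobian factor $t^2$ and rescales the dual variable to $t\beta'$, yielding (3). For $\mu$, the variable $x_0$ is merely relabelled by $\mu^{-1}$ while $\alpha$ and $\beta$ are fixed, so the Fourier transform in $\alpha$ is unaffected, giving (4). For the center $n(0,r)$ of the Heisenberg group the action $(\alpha, x_0, \beta)\mapsto (\alpha - ir\beta, x_0, \beta)$ is a real translation of $(\alpha_1, \alpha_2)$ by $(r\beta_2, -r\beta_1)$ with $\beta$ held fixed, and the resulting oscillatory factor is $\exp\bigl(2\pi i r(\beta_1\beta_2' - \beta_2\beta_1')\bigr) = e\bigl(r\Im(\beta'\bar\beta)\bigr)$, which is (1).

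The substantive case is (2). Here I would first express the unitary action of $n(w,0)$ on $V$ in invariant form: the conditions that $\ell$ is fixed, that the matrix is unipotent relative to the filtration $\C\ell \subset \ell^\perp \subset V$, and that the hermitian form is preserved force
\[
n(w,0)v = v - \hlf{w}{v}\ell \quad (v\in W), \qquad n(w,0)\ell' = \ell' + w - \tfrac12\hlf{w}{w}\ell.
\]
Since $\hlf{w}{w}\in\R$, the stated group law gives $n(w,0)^{-1} = n(-w,0)$, and therefore
\[
n(w,0)^{-1}x = \bigl(\alpha + \hlf{w}{x_0} - \tfrac{\beta}{2}\hlf{w}{w}\bigr)\ell + (x_0 - \beta w) + \beta\ell'.
\]
Substituting $\tilde\alpha = \alpha + \hlf{w}{x_0} - \tfrac{\beta}{2}\hlf{w}{w}$, a real translation of $(\alpha_1,\alpha_2)$ with $x_0$ and $\beta$ held fixed, in the Fourier integral pulls out the factor $\exp\bigl(-2\pi i \Re(\bar{\beta'}c)\bigr)$ with $c = \hlf{w}{x_0} - \tfrac{\beta}{2}\hlf{w}{w}$, while the remaining integral equals $\hat\varphi(\beta', x_0 - \beta w, \beta)$.

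The main obstacle will be the bookkeeping needed to rewrite this phase in the form required by (2). Using $\overline{\hlf{w}{x_0}} = \hlf{x_0}{w}$ one has $\Re(\bar{\beta'}\hlf{w}{x_0}) = \Re(\beta'\hlf{x_0}{w})$, while $\Re(\bar{\beta'}\beta) = \Re(\beta'\bar\beta)$ and $\hlf{w}{w}\in\R$, so the phase collapses to $e\bigl(\tfrac12\hlf{w}{w}\Re(\beta'\bar\beta) - \Re(\beta'\hlf{x_0}{w})\bigr)$, matching (2). Beyond this careful tracking of the conjugate linearity of $\hlfempty$, every step in each of the four cases is a routine change of variables.
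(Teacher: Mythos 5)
Your proposal is correct and follows the same route as the paper: compute $g^{-1}x$ in the hyperbolic coordinates for each generator (the formula for $n(-w,0)(\alpha,x_0,\beta)$ you derive is exactly the one the paper uses) and then read off the effect of the translation or rescaling of $\alpha$ on the partial Fourier transform. The only addition is your explicit derivation of the $n(w,0)$-action from the unipotence and isometry conditions, which the paper replaces by simply writing down the matrix; the phase bookkeeping via $\Re(\bar{\beta'}c)$ and the conjugate-linearity of $\hlfempty$ checks out.
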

\begin{proof}
 Since
\[
g \varphi = \varphi( x, \tau, g z_0) = \varphi( g^{-1} x, \tau, z_0), 
\]
the operation of $N$ and the elements of the Levi-factor take the form
\[
\begin{aligned}
n(0,r)\varphi & =  \varphi(x, \tau ,n(0,r)z_0) = \varphi( (\alpha - ir\beta, x_0, \beta), \tau, 
z_0), \\
n(w,0) \varphi & = 
\varphi((\alpha, x_0, \beta), \tau, n(w,0)z_0 ) =
\varphi(n(-w,0) (\alpha, x_0, \beta), \tau, z_0 ) \\
& = \varphi((\alpha  +   \hlf{w}{x_0} - \beta\tfrac12\hlf{w}{w}, x_0 - w\beta, \beta), 
\tau, z_0), \\
a(t)\varphi & =  
\varphi\left((\alpha, x_0, \beta), \tau, a(t) z_0\right) =
\varphi\left((t^{-1}\alpha, x_0,  t\beta), \tau, z_0\right), \\
\mu\varphi & = \varphi((\alpha, x_0, \beta), \tau, \mu z_0) = \varphi ((\alpha, \mu^{-1}x_0, \beta),\tau, z_0). 
\end{aligned}
\]
The claim follows easily by calculating of the partial Fourier transform in $\alpha$.
\end{proof}

\paragraph{The Schwartz form \texorpdfstring{$\psipq$}{} in the mixed model} From Proposition \ref{prop:psipq_mixed}, though the intertwining operators from Lemma \ref{lemma:op_Pell}, we obtain the following Proposition:
 \begin{proposition} \label{prop:psipq_NAM_mixed} 
  Let $g = m(w,0)  \circ m(0,r) \circ a(t)$. Then, $\widehat{\psipq}(gz_0)$ is given by
  \begin{gather*}
    \widehat\psipq  ((\eta, x_0),\tau)(g z_0)  =
    \frac{2 i (-1)^{q-1}}{2^{2(q-1)}}
     \!\!\!\sum_{\substack{\underline\gamma = \{\gamma_1, \dotsc, \gamma_{q-1}\} \\
       \underline\delta = \{\delta_1, \dotsc, \delta_{q-1}\}}} 
   \bigl(-i\pi^{\frac12}\bigr)^{n_\gamma + n_\delta}
   \left( 2v \right)^{-\frac{n_\gamma + n_\delta}{2}}
 t^{2 + \frac{n_\gamma + n_\delta}{2}}
   \\
  \,\cdot\; 
    \left( \beta' + \bar\tau \beta\right)^{n_\delta}
   \left(\bar\beta' +\bar\tau \beta\right)^{n_\gamma}
  \sum_{\ell = 0}^{2q -2 -n_\gamma - n_\delta}
  v^{\frac\ell{2}} P_{\tilde\gamma, \tilde\delta; \ell}(x_{0,+} - \beta w_{+})\cdot \widehat\varphi_{0}^{p,q}((t\eta, x_{0,+} - \beta w_+), \tau) \\
  \,\cdot\; 
e\left( r\Im(\beta' \bar\beta) + \frac12\Re(\beta'\bar\beta) \hlf{w}{w} - \Re\left(\beta'\hlf{x_0}{w}\right)\right) 
\otimes \Omega_{q-1}(\underline\gamma; \underline\delta).
  \end{gather*}
We note that the intertwining operator for the action of $\mu \in M \simeq \SU(p-1, q-1)$ is trivial. 
\end{proposition}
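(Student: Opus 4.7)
The plan is to compute $\widehat{\psipq}(gz_0)$ by applying the three intertwining operators from Lemma \ref{lemma:op_Pell} in sequence to the expression for $\widehat\psipq$ at the base point $z_0$ already obtained in Proposition \ref{prop:psipq_mixed}. Since the given $g = n(w,0)\circ n(0,r)\circ a(t)$ has no $M$-component, only parts (1)--(3) of the lemma are needed; in particular, the polynomials $P_{\tilde\gamma,\tilde\delta;\ell}$ (and the forms $\Omega_{q-1}(\underline\gamma;\underline\delta)$) are inert under $N$ and $A$ except for the affine substitution in $x_0$.

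I would first apply the intertwiner for $a(t)$, which by Lemma \ref{lemma:op_Pell}(3) multiplies by $t^2$ and substitutes $\eta \mapsto t\eta$ (i.e.\ $\beta\mapsto t\beta$, $\beta'\mapsto t\beta'$). Because $P_{\tilde\gamma,\tilde\delta;\ell}(x_{0,+})$ is independent of $\eta$ and $v$ is unaltered, the scaling affects only the Gaussian $\widehat\varphi_0^{p,q}$ (giving $\widehat\varphi_0^{p,q}((t\eta,x_{0,+}),\tau)$) and the monomial factors in $\beta,\beta'$. Next I would apply the intertwiner for $n(0,r)$, which by Lemma \ref{lemma:op_Pell}(1) is simply multiplication by $e(r\Im(\beta'\bar\beta))$. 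Finally I would apply the intertwiner for $n(w,0)$, which by Lemma \ref{lemma:op_Pell}(2) shifts $x_0\mapsto x_0 - \beta w$ throughout (so that $P_{\tilde\gamma,\tilde\delta;\ell}(x_{0,+})$ becomes $P_{\tilde\gamma,\tilde\delta;\ell}(x_{0,+}-\beta w_+)$ and the Gaussian becomes $\widehat\varphi_0^{p,q}((t\eta, x_{0,+}-\beta w_+),\tau)$), and multiplies by the phase $e\bigl(\tfrac12\Re(\beta'\bar\beta)\hlf{w}{w}-\Re(\beta'\hlf{x_0}{w})\bigr)$.

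Combining the three steps and collecting the scalar factors yields the stated formula: the overall $t^2$ from (3), the two exponential phases from (1) and (2), the substitution in the Gaussian, and the substitution in the auxiliary polynomial. The prefactor $(-i\pi^{1/2})^{n_\gamma+n_\delta}(2v)^{-(n_\gamma+n_\delta)/2}$, the factor $v^{\ell/2}$, and the differential form $\Omega_{q-1}(\underline\gamma;\underline\delta)$ are carried through unchanged by each step, since they involve neither $\beta,\beta'$ (in the cases of the prefactors) nor $x_0$ (in the case of $\Omega_{q-1}$), while the rescaling of $\beta,\beta'$ in the monomial $(\beta'+\bar\tau\beta)^{n_\delta}(\bar\beta'+\bar\tau\beta)^{n_\gamma}$ produces the additional $t$-power recorded in the statement.

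The calculation is essentially routine, so there is no real obstacle. The only point requiring a little care is the bookkeeping of $t$-powers: one must track how the scaling $\eta\mapsto t\eta$ distributes between the Gaussian (where it is absorbed into the argument as written) and the explicit polynomial factors in $\beta,\beta'$ (where it produces an extra power of $t$), and verify that the resulting total exponent of $t$ matches the one in the statement. Once this bookkeeping is done and the order of the substitutions (first $a(t)$, then $n(0,r)$, then $n(w,0)$) is respected, the formula for $\widehat\psipq(gz_0)$ reads off directly, and the remark that the $M$-intertwiner is trivial simply records part (4) of Lemma \ref{lemma:op_Pell} applied to the $M$-invariance statement at the base point.
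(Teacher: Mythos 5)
Your proposal is correct and is exactly the route the paper takes: Proposition \ref{prop:psipq_NAM_mixed} is derived in the text simply by applying the intertwiners of Lemma \ref{lemma:op_Pell} (parts 1--3, in the order $a(t)$, then $n(0,r)$, then $n(w,0)$) to the mixed-model expression of Proposition \ref{prop:psipq_mixed}, with no further argument supplied. The $t$-power bookkeeping you flag is indeed the one delicate point: carrying the scaling $\eta\mapsto t\eta$ into the monomials $\left(\beta'+\bar\tau\beta\right)^{n_\delta}\left(\bar\beta'+\bar\tau\beta\right)^{n_\gamma}$ contributes $t^{n_\gamma+n_\delta}$ on top of the $t^2$ from Lemma \ref{lemma:op_Pell}(3), which should be checked against the exponent $t^{2+\frac{n_\gamma+n_\delta}{2}}$ printed in the statement (compare the prefactor $t^{n_\gamma+n_\delta+2}$ appearing later in Theorem \ref{thm:FJexp_pq}).
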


\subsection[The Fourier expansion of a harmonic Maass form]{Fourier expansion of a weak harmonic Maass form in the mixed model}\label{subsec:Hmf_mixed}
Let $f\in \HmfLp{k}{L^-}$ be a weak harmonic Maass with weight $k = 2 - (p+q)$ under the discrete Weil representation of $\rho_{L^-}$. Let $a^+(h,n)$ and $a^-(h,n)$  denote the Fourier coefficients of its holomorphic part and non-holomorphic part, respectively. The Fourier expansion of $f$ in the mixed model can be described as follows, see \citep[][p.\ 23]{K16}:
\[
f(\tau) = \sum_{n\gg -\infty} \hat{\mathbf{a}}^{+}(n)q^n + \hat{\mathbf{a}}^- (0) v^{1-k} + \sum_{\substack{ n\gg-\infty \\ n\neq 0}} \hat{\mathbf{a}}^-(n)\Gamma\left(1-k, 4\pi\abs{n} v\right) e^{2\pi i n u},
\]
where we have introduced vector-valued Fourier coefficients 
$\hat{\mathbf{a}}^\pm(n)$ by setting.
\[
  \hat{\mathbf{a}}^\pm(n) \vcentcolon = \sum_{\substack{\lambda = \lambda_\ell \ell + \lambda_W + \lambda_{\ell'}\ell \\ \lambda \in L^\dual/L}} a^\pm(\lambda,n) \hat\ebase_\lambda
  =   \sum_{\substack{\lambda = \lambda_\ell \ell + \lambda_W + \lambda_{\ell'}\ell \\ \lambda \in L^\dual/L}} a^\pm(\lambda,n) e\left( - \lambda_2\cdot \beta'\right) \ebase_\lambda. 
\]

\section[The singular theta lift, Kudla's approach]{Evaluating the singular theta lift based on Kudla's approach}\label{sec:eval_Kstyle}
Denote by $\Theta(\tau, z; \psipq)$ the theta series attached to the Schwartz form  $\psipq$. For a weak harmonic Maass form $f \in \HmfLp{k}{L^-}$ with $k=2 - (p+q)$, the singular theta lift of Borcherds type studied in \cite{FH21} is given by the regularized integral
\begin{equation}\label{eq:reg_int}
\Phi(z,f, \psipq)  = \int_{\Gamma/\Hp}^{reg}  f(\tau) \,\Theta(\tau,z; \psipq)\, d\mu(\tau) 
= \int_{\Gamma/\Hp}^{reg}  \left\langle f(\tau), \overline{\Theta(\tau,z)} \right\rangle_L d\mu.
\end{equation}
The regularization follows the standard procedure introduced by Harvey and Moore \cite{HM96} and Borcherds \cite{Bo98}: Denote by $\mathcal{F}_t$ $(t \in  \R_{>0})$ a truncated fundamental domain given by 
\[
\mathcal{F}_t \vcentcolon = \left\{ \tau = u  + iv;\; \abs{\tau}>1, \abs{u}<\tfrac12, 0<v\leq t\right\}.
\]
Then, 
\[
\int_{\Gamma/\Hp}^{reg}  \left\langle f(\tau), \overline{\Theta(\tau,z)} \right\rangle_L d\mu
\vcentcolon =   \CT_{s=0}\left[\lim_{t\rightarrow \infty} \int_{\mathcal{F}_t}\left\langle f(\tau), \overline{\Theta(\tau,z)}\right\rangle_L v^{-s} d\mu  \right],
\]
where the notation $\CT_{s=0}$ means taking the constant term at $s=0$ of the meromorphic continuation of the limit\footnote{If $0$ happens to be a pole, a slight variation of this recipe is required, see \cite{Br02}.}.

To evaluate the regularized theta integral and to calculate the Fourier-Jacobi expansion of $\Phi(z, f, \psi)$ we employ a method recently introduced by Kudla in \cite{K16}. The key observation is that, due to invariance under the action
of $\Gamma = \SL_2(\Z)$, the theta function can be decomposed along the
$\Gamma$-orbits of $\eta = [\beta, \beta']$: 
\begin{align*}
\Theta(\tau, z; \psipq) & = 
\sum_{h \in L^\dual/L} \sum_{\substack{ \lambda \in L + h \\ \lambda = (\alpha, x_0, \beta)}} \mathcal{F}\left( \omega\left(g_\tau, \psi_{\sqrt{2}}\right) \psipq(\lambda, z)\right) \ebase_h\\
& = \sum_{h \in L^\dual/L} \sum_{\substack{(\eta, x_0) \in L + h \\ \eta = [\beta,\beta'], \, x_0 \in W}}  \widehat\psipq\left(\sqrt{2}(\eta, x_0), \tau, z\right) \ebase_h\\
& = \sum_{\eta / \sim} \sum_{\gamma \in \Gamma_{\eta} \backslash \Gamma} \theta_{\gamma\eta}( \tau, z),
\intertext{with} 
\theta_{\gamma\eta}(\tau ,z) &  = 
\sum_{h \in L^\dual/L} \sum_{ \substack{ x_0 \in W \\ (\eta, x_0) \in L + h}} 
\widehat\psipq\left(\sqrt{2}(\eta g, x_0), \tau, z\right) \ebase_h
\end{align*}
Here, $\Gamma_\eta$ denotes the stabilizer of $\eta$ in $\Gamma$, and $\eta/\sim$ denotes the orbit of $\eta$ under the action of $\Gamma$. A set of orbit representatives is given by the following Lemma. Note that with this choice of representatives, all $\eta$ of a given rank have the same stabilizer $\Gamma_\eta$.

\begin{lemma}(\citep[see][p.\ 20]{K16})\label{lemma:Ks_reprs}
 The orbits of matrices in $\mathrm{M}_2(\mathbb{Q})$ under the operation of $\SL_2(\Z)$ with their respective sets of representatives and stabilizers in $\SL_2(\Z)$ are  the following:
 \begin{enumerate}
\item The zero orbit, stabilized by the whole of $\SL_2(\Z)$.
\item The orbits of rank 1 matrices. A set of representatives is given by
\[
\begin{pmatrix}
0  & a \\ 0 & b
\end{pmatrix}\quad \text{with}\;a>0 \;\text{or $a=0$ and $b>0$.}  
\]
They are stabilized by $\SL_2(\Z)_\infty = \left\{ \begin{psmallmatrix}
1 & n \\ 0 & 1
\end{psmallmatrix}; \, n\in\Z \right\}$. 
 \item The orbits of rank 2 matrices. A set of representatives is given by all matrices of the form
	\begin{equation} \label{eq:param_rk2}
	\begin{pmatrix} a & b \\ 0 & \alpha
	\end{pmatrix} \quad \text{with}\; a,\alpha \in \Q^\times, a>0 \quad\text{and}\quad
	b\in\Q \mod{a\Z}.
	\end{equation}
	The stabilizer of any such orbit is trivial. 
      \end{enumerate}
\end{lemma}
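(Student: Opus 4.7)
The action of $\SL_2(\Z)$ on $\mathrm{M}_2(\Q)$ (which, in view of the theta decomposition in Section \ref{sec:eval_Kstyle}, is taken to be right multiplication) preserves rank, so the orbit space decomposes naturally into the three strata of ranks $0$, $1$, and $2$. In each stratum the plan is to produce canonical representatives by a Hermite-normal-form style reduction and then read off the stabilizers by direct calculation. The rank $0$ case is immediate: the zero matrix is the unique orbit and is fixed by all of $\SL_2(\Z)$.

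For rank $1$, the two columns of $\eta$ lie on a common line in $\Q^2$, so one can write $\eta = (\lambda_1 u \mid \lambda_2 u)$ with $u\in\Q^2\setminus\{0\}$ and $(\lambda_1,\lambda_2)=N^{-1}(c_1,c_2)$ for some $N\in\Z_{>0}$ and coprime pair $(c_1,c_2)\in\Z^2$. B\'ezout then furnishes a $\gamma\in\SL_2(\Z)$ whose first column is $(-c_2,c_1)^t$, and for this $\gamma$ the product $\eta\gamma$ has vanishing first column, hence the shape $(0\mid v)$. A direct calculation shows that the subgroup of $\SL_2(\Z)$ preserving the condition of having a vanishing first column is $\pm\SL_2(\Z)_\infty$, and it sends $(0\mid v)$ to $(0\mid\pm v)$. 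Imposing the normalization $a>0$, or $a=0$ and $b>0$, on $v=(a,b)^t$ therefore singles out a unique representative in each orbit, whose stabilizer reduces to exactly $\SL_2(\Z)_\infty$.

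For rank $2$, the columns of $\eta$ span a full-rank $\Z$-lattice $\Lambda_\eta\subset\Q^2$, and right multiplication by $\SL_2(\Z)$ amounts to changing the ordered basis of $\Lambda_\eta$ by a matrix of determinant $+1$. The standard integral Hermite normal form theorem (applied after clearing a common denominator of the entries of $\eta$) then furnishes a unique upper-triangular basis of the shape $\begin{psmallmatrix}a & b \\ 0 & \alpha\end{psmallmatrix}$ with $a>0$, $b\in\Q/a\Z$, and $\alpha=\det(\eta)/a\in\Q^\times$; the normalization $a>0$ is available thanks to $-1_2\in\SL_2(\Z)$. Since $\eta$ is invertible, the equation $\eta\gamma=\eta$ forces $\gamma=1_2$, so the stabilizer is trivial.

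I do not expect any real obstacle here beyond bookkeeping. The only points requiring a moment's care are that the usual integral Hermite-normal-form reduction transfers verbatim to rational matrices by clearing denominators, and that the constraint $\det\gamma=+1$, combined with the positivity normalization of $a$ (and the alternate normalization $b>0$ when $a=0$), eliminates precisely the $\pm 1$ ambiguity and so produces the bijection between orbits and the stated list of representatives.
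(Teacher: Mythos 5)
Your argument is correct, and it is worth noting that the paper itself supplies no proof of this lemma at all: it is quoted from \cite[p.~20]{K16}, so your write-up is a genuine self-contained verification rather than a variant of an argument in the text. The route you take --- stratify by rank, kill the first column by a B\'ezout step (equivalently, a column-style Hermite reduction after clearing denominators), then normalize against the residual upper-triangular subgroup --- is the natural one and closes all the cases. Two points you handled correctly and that deserve to be made explicit, since the lemma is false under the opposite conventions: (i) the action is right multiplication $\eta\mapsto\eta\gamma$ (this is what the paper's intertwining formulas use, e.g.\ the factor $[\beta,\beta']g_\tau'$ in Corollary \ref{corol:psi1q_mixed}, and it is the only convention under which $\SL_2(\Z)_\infty$ fixes $\begin{psmallmatrix}0&a\\0&b\end{psmallmatrix}$ and under which $b$ is reduced modulo $a\Z$ rather than $\alpha\Z$); (ii) only $\SL_2(\Z)$, not $\mathrm{GL}_2(\Z)$, is available, so the usual Hermite normalization of the second diagonal entry cannot be imposed and the sign of $\alpha=\det(\eta)/a$ is an orbit invariant --- your remark that $-1_2$ buys exactly the normalization $a>0$ and nothing more is the right accounting. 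The only cosmetic gap is that uniqueness of the rank-$2$ representatives (distinct normalized triples $(a,b,\alpha)$ lie in distinct orbits) is left to the appeal to Hermite normal form; it follows from the same two-line computation that gives the trivial stabilizer, namely that $\begin{psmallmatrix}a&b\\0&\alpha\end{psmallmatrix}\gamma=\begin{psmallmatrix}a'&b'\\0&\alpha'\end{psmallmatrix}$ forces $\gamma$ upper triangular with diagonal entries $\pm1$, whence $a'=a$, $\alpha'=\alpha$ and $b'\equiv b\pmod{a\Z}$. Spelling that out would make the proof entirely elementary and independent of any normal-form theorem.
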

From the decomposition of the theta function and as a consequence of Lemma \ref{lemma:Ks_reprs}, the integrand of the regularized integral in \eqref{eq:reg_int} decomposes along $\Gamma$-orbits, which can be ordered according to the rank of their representatives,  
\[
\left\langle f, \overline{\Theta}\right\rangle_L = \sum_{i=0}^2 \sum_{\substack{\eta / \sim \\ 
\operatorname{rank}(\eta) = i}}\sum_{\gamma \in \Gamma_\eta \backslash \Gamma} \left\langle  
f(\gamma\tau), \overline{\theta_{\gamma\eta}( \tau, z)}  \right\rangle_L.
\]
Since each term is $\Gamma$-invariant, the regularized integral can be decomposed similarly.

Note that the contributions of $\gamma$ and $-\gamma$ are the same, as $-1_2 \in \Gamma$ operates trivially. Hence, after unfolding, a factor of $2$ occurs for the non-zero orbits, i.e.~for $i=1,2$. 
Thus, we write  
\begin{gather}  \label{eq:def_sum_phi}
  \Phi(z, f, \psipq) = \Phi_0 (z, f, \psipq) + 2\Phi_1 (z, f, \psipq) + 2\Phi_2(z, f, \psipq),\quad\text{with} \\ 
\notag \Phi_i(z, f, \psipq) \vcentcolon = 
\int_{\Gamma\backslash \mathbb{H}}^{reg}
\sum_{\substack{ \eta =  [\beta, \beta'] / \sim\\  \operatorname{rank}(\eta) = i}}
\sum_{\gamma\in\Gamma_\eta\backslash \Gamma} \left\langle 
f(\gamma\tau), \overline {\theta_{\eta}(\gamma\tau, z)} \right\rangle_L 
v^{-2} du\, dv \quad(i=0,1,2).
\end{gather}
Further, due to rapid decay of the integrand, the integral can be evaluated for each term separately, with fixed $\eta$, and summed up later. 

Hence, we set
\[
  \Phi_i(z, f, \psipq) =\vcentcolon \sum_{\substack{ \eta =  [\beta, \beta'] / \sim\\  \operatorname{rank}(\eta) = i}}
  \sum_{n \gg -\infty}
 \left(\hat{a}^+(n) \phi_i(n,\eta)^+ + \hat{a}^{-}(n)\phi_i(n,\eta)^{-}\right)\qquad \left(i = 0,1,2\right),
\]
with coefficients $\hat{a}^\pm (n)$ obtained from the mixed model Fourier coefficients 
$\hat{\mathbf{a}}^\pm(n)$ of $f^\pm$ (see Section \ref{subsec:Hmf_mixed}) via $ \hat{a}^\pm(n) = \left\langle\sum_{h \in L^\dual/L} \ebase_h, \hat{\mathbf{a}}^\pm(n) \right\rangle_L$,  
and with
\begin{align*}
\phi_i(n,\eta)^+ & = \int_{\Gamma_\eta\backslash \Hp}^{reg} e^{2\pi i n u} \theta_\eta(\tau, z)\, d\mu(\tau) \\
  \phi_i(n,\eta)^-
   & = \int_{\Gamma_\eta \backslash \Hp}^{reg} \Gamma(1-k, 4\pi\abs{n})e^{2\pi i n u} \theta_{\eta}(\tau, z)\,  d\mu(\tau) \qquad (n\neq 0), \\
   \phi_i(0, \eta)^- & = \int_{\Gamma_\eta \backslash \Hp}^{reg} \theta_{\eta}(\tau, z) v^{1-k}\, d\mu(\tau).
  \end{align*}
  
  The domain of integration depends on $i$. For $i=0$ we have a usual fundamental domain $\Gamma\backslash \Hp$, while for $i = 1$ we have 
  $\Gamma_\infty \backslash \Hp$. Finally, for $i=2$ the domain of integration is the whole upper half plane. 

An advantage of this approach is that the Fourier-Jacobi expansion of the theta lift is fairly easy to calculate. 
In fact, the constant term of the Fourier expansion is obtained from the rank 1 terms of the lift and the zero-orbit, 
while the remaining terms can be read off from the terms for non-singular $\eta$, see Section \ref{sec:FJ_exp} below.

To facilitate calculation across different signatures, we will first evaluate the integrals at the base point $z= z_0 \in \Dom$, this is carried out in Section \ref{sec:unfolding}, and apply intertwining operators for the operation of $G$ to the individual terms $\phi_i(n,\eta)^\pm$ after integration.

Recall the $NAM$ decomposition of $G$ from Section \ref{subsec:itw_G}. Assume that for every $z$ in $\Dom$ a continuous choice of  $n(w,r) \in N$, $a(t) \in A$ and $\mu \in M$ has been fixed such that $z  = (n(w,r)a(t)\mu )(z_0)$. 
We will use $w,r.a$ and $\mu$ as coordinates to describe the Fourier-Jacobi expansion, see Theorem \ref{thm:FJexp_pq}. Since the Fourier-Jacobi expansion is closely linked to the operation of the Heisenberg group $N$, this appears as a natural choice for our purpose. 

\begin{remark}\label{rmk:no_rkzero_term} 
	As mentioned in the introduction, we should point out that we will not evaluate the rank $0$ term $\Phi_0(z, f, \psipq)$, 
	in general, at least not if $p,q>1$. This term is given by a convolution integral of the harmonic Maass form $f$ with an 
        (in general) indefinite theta series for the lattice $L \cap W$.  
        Of course, if $q=1$, there is no contribution of the rank $0$ term, so $\Phi_0 \equiv 0$. 
	If $p=1$, the theta series is definite, and this kind of integral, at least for $f\in\MfLw{k}{L^-}$, 
	has already been treated by Borcherds \citep[see][Sec. 9]{Bo98}. 
	Hence, in this special case, the value of $\Phi_0(z_0, f,\psi_{p,1})$ can be deduced from Borcherds' results, following Kudla \citep[][]{K16}, see Lemma \ref{lemma:Bprod_rank01} below.
\end{remark}

\section{The lift at the base point}\label{sec:pq_lift}
In this section, we state our first main result, an explicit expression for the singular theta lift of a weak harmonic Maass form $f$ evaluated at the base point. Our Theorem \ref{thm:Phipq_atz0} covers the general case of signature $(p,q)$ with $p,q\geq 1$ and $p+q>2$. The calculations for this are carried out in Section \ref{sec:unfolding}. Two special cases, where either $q=1$ or $p=1$ are treated in Examples \ref{ex:sig_p1} and  \ref{ex:sig_q1}.

Recall the kernel function from Proposition \ref{prop:psipq_mixed}. We have
\begin{equation}
  \begin{aligned}
 \widehat\psipq & (\sqrt{2}(\eta, x_0),\tau)  = \frac{2 i (-1)^{q-1}}{2^{2(q-1)}}
     & \sum_{\substack{\underline\gamma = \{\gamma_1, \dotsc, \gamma_{q-1}\} \\
       \underline\delta = \{\delta_1, \dotsc, \delta_{q-1}\}}} 
\left( -i \sqrt{\pi} \right) ^{n_\gamma + n_\delta} 
   \left( \beta' + \bar\tau \beta\right)^{n_\delta}
   \left(\bar\beta' +\bar\tau \beta\right)^{n_\gamma}\\
& \qquad \cdot\sum_{\ell = 0}^{2q - n_\gamma - n_\delta -2}
  2^{\frac{\ell}{2}} v^{\frac{(\ell- n_\gamma - n_\delta)}{2}}
  & P_{\tilde\gamma, \tilde\delta; \ell}(x_{0,+})
   \exp\left(-\frac{2\pi}{v}\left(\abs{ \beta' + \bar\tau\beta}^2
     + 2v\Im\left(\beta'\bar\beta\right)\right) \right)
     \\ & & \quad\cdot\;
   e\bigl(\bar\tau\hlf{x_{0,-}}{x_{0,-}} + \tau \hlf{x_{0,+}}{x_{0,+}}\bigr)
   \otimes \Omega_{q-1}(\underline\gamma; \underline\delta),
   \end{aligned}
 \end{equation}
 where $n_\gamma$ and $n_\delta$ denote the number of $1$'s occurring  in  the respective  multi-index, and $\tilde\gamma$ and $\tilde\delta$ denote the remaining multi-indices after striking out all occurrences of $1$. The ranks of $\tilde\gamma$ and $\tilde\delta$  are $q-1 - n_\gamma$ and $q - 1 - n _\delta$, respectively.

First, we introduce some notation which will be used in this and the following sections:

\begin{notation}\label{not:def_ABC}
 If $\eta = [\beta, \beta']$ is non-singular, define $\AAp{\eta}$, $\AAm{\eta}$, $\AAb{\eta}$, $\BB_\eta$ and $\CC_\eta$ by setting
\begin{equation}\label{eq:def_A_eta}
  \begin{gathered} 
    \AAp{\eta}(n , x_0) \vcentcolon=  
    n  + 2\abs{\beta}^2 + \hlf{x_0}{x_0},  \quad 
    \AAm{\eta} (n, x_0) \vcentcolon =  n  - 2\abs{\beta}^2 + \hlf{x_0}{x_0} \\
    \AAb{\eta}(n,x_0) \vcentcolon = \frac12\left( n + \hlf{x_0}{x_0}\right)^2 + 2\abs{\beta}^4  + 2\abs{\beta}^2 \hlf{x_0}{x_0}_{z_0} + 2\abs{\beta}^2n, \\  =  \frac12\AAp{\eta}(n, x_0)^2 - 4\hlf{x_{0,-}}{x_{0,-}}
  \end{gathered}
\end{equation}
and
\begin{equation} \label{eq:def_BC}
\BB_\eta \vcentcolon =2\left( \abs{\beta'}^2 \abs{\beta}^2 - \Re\left(\beta'\bar\beta\right)^2 \right) 
= 2\Im\left(\beta'\bar\beta\right)^2 \quad
\text{and}\;\, \CC_\eta \vcentcolon=  \frac{\Re(\beta'\bar\beta)}{\abs{\beta}^2}.
\end{equation}
When $n$ or $x_0$ are fixed, we will drop either (or both) and just write, e.g.\   $\AAb{\eta}$ if both $x_0$ and $n$ are fixed.
We note that both  $\BB_\eta$ and $\AAb{\eta}$ are non-negative real numbers, and if $\beta\neq 0$ they are both positive.

Further, generalizing \citep[][(3.25)]{Br02}, we introduce the following special function 
\begin{equation}  \label{eq:def_vint}
 \begin{aligned}
\Vint{N, \mu}{A, B, c} & \vcentcolon  =   \int_0^\infty \Gamma(N - 1, c v) v^{-\mu} e^{-Av -Bv^{-1}} dv \qquad \bigl(N\geq 2, \Re(A+c), \Re(B)>0\bigr)\\
 & =  2(N-2)!  \sum_{r=0}^{N-2} \frac{c^r}{r!} \left(\frac{A + c}{B}\right)^{\frac{\mu - r -1 }{2}}\!
    K_{r+1-\mu}\left(2\sqrt{(A+c)B}\right),
  \end{aligned}
\end{equation}
see Lemma \ref{lemma:int_besselgamma} concerning the evaluation of the integral. Also note that for $N = p+q$ one has  $N-1 = 1 - k$ and $N-2 = -k = \kappa$.
\end{notation}
Now, with the notation from Section \ref{sec:eval_Kstyle}, consider the terms $\phi_i(n,\eta)^\pm$, $i=1,2$. Since the differential form part $\Omega_{q-1}(\underline{\gamma}, \underline{\delta})$  of the Schwartz form $\psipq$ depends on the multi-indices $\underline{\gamma}$, $\underline{\delta} \in \left\{1, \dotsc, p \right\}^{q-1}$, each term $\phi_i(n,\eta)^{\pm}$ can be decomposed as
\[
\phi_i(n,\eta)^\pm = \frac{2i(-1)^{q-1}}{2^{2(q-1)}}\sum_{\underline{\gamma},\underline{\delta}} \phi_i^{\underline{\gamma}, \underline{\delta}}(n,\eta)^\pm \otimes \Omega_{q-1}\left(\underline{\gamma}, \underline{\delta}\right).  
\]
Hence, for $f\in\HmfLp{k}{L^-}$ the regularized integral $\Phi(z_0,f, \psipq)$ can be written in the form
\begin{align*}
    \Phi(z_0, f, \psipq) &  = \Phi_{12}(z_0, f, \psipq) + \Phi_0(z_0, f, \psipq) \\
                       &  = 2\sum_{i=1}^2 \left( \Phi_i(z_0, f^+, \psipq) + \Phi_i(z_0, f^{-}, \psipq)  \right) + \Phi_0(z_0, f, \psipq) \\ 
  \intertext{with}
   \Phi_i(z_0, f^\pm, \psipq) & =  \frac{2i(-1)^{q-1}}{2^{2(q-1)}}
\sum_{\underline{\gamma}, \underline{\delta}} 
\Bigl(  \sum_{\substack{ \eta =  [\beta, \beta'] / \sim\\  \operatorname{rank}(\eta) = i}} 
\sum_{n\gg -\infty} \hat{a}^\pm(n)  
\phi^{\underline{\gamma}, \underline{\delta}}_i(n,\eta)^\pm   \Bigr)\otimes \Omega_{q-1}\bigl(\underline{\gamma}, \underline{\delta}\bigr).
\end{align*}
Note the factor of 2 which occurs after unfolding for the terms with $i=1,2$, since $\gamma$ and $-\gamma$ have the same contribution.  
The individual terms $\phi_i^{\underline{\gamma}, \underline{\delta}}(n, \eta)^\pm$ $(i=1,2)$  are calculated in Section \ref{sec:unfolding}. The Lemmas there are the basis for the proof the main result of this section, Theorem \ref{thm:Phipq_atz0}. 
Finally, the coefficients $\hat{a}^\pm(n)$ are explicitly given by 
\begin{equation}\label{eq:ahatn}
\hat{a}^\pm(n) =  
\left\langle \sum_h \ebase_h, \hat{\mathbf{a}}^\pm(n) \right\rangle_L = 
\sum_{\substack{\lambda = \lambda_\ell \ell + \lambda_W + \lambda_{\ell'}\ell \\ \lambda \in L^\dual/L}} a^\pm(\lambda,n) e\left( - \lambda_2\cdot \beta'\right). 
\end{equation}

We fix some more notation.   
Given $n_\gamma, n_\delta$ as defined above, with $0\leq n_\gamma, n_\delta \leq q-1$, and $0\leq M \leq n_\gamma + n_\delta$, set 		
\[
R_{n_\delta, n_\gamma}(\eta, M) \vcentcolon=  
\sum_{ \substack{ 0\leq \mu_1 \leq n_\delta  \\ 0\leq \mu_2 \leq n_\gamma  \\ \mu_1 + \mu_2 = M }}
\beta^{\mu_1} \bar\beta^{\mu_2} \beta'^{n_\delta - \mu_1} \bar\beta'^{n_\gamma - \mu_2} \binom{n_\delta}{\mu_1}\binom{n_\gamma}{\mu_2}.
\]
Now, we can state the Theorem. 
\begin{theorem}\label{thm:Phipq_atz0} 
	For a weak harmonic Maass form $f \in \HmfLp{k}{L^-}$ with Fourier coefficients $a^{\pm}(h,n)$ $(h\in L^\dual/L, n\in\Q, -\infty \ll n)$ 
	the regularized theta integral $\Phi_{12}(z_0, f, \psipq)$  is given by 	
	\[
	\begin{aligned}
	\Phi_{12} & (z_0, f, \psipq)   
	=  2\cdot\frac{2i(-1)^{q-1}}{2^{2(q-1)}}  
	\sum_{\substack{\underline{\gamma}, \underline{\delta}}}  \;
        \\ & \;  \cdot\,
   \biggl\{ 	\sum_{i = 1}^2 
	 \sum_{\substack{ \eta =  [\beta, \beta'] / \sim \\  \operatorname{rank}(\eta) = i }} 
	\sum_{n\gg -\infty}   
	\left(\hat{a}^+(n) \phi^{\underline{\gamma}, \underline{\delta}}_i (n,\eta)^+(z_0)
	+ \hat{a}^{-}(n)\phi^{\underline{\gamma}, \underline{\delta}}_i (n,\eta)^-(z_0)
	\right)  
	\biggr\}\otimes
		\Omega_{q-1}\bigl(\underline{\gamma}; \underline{\delta}\bigr),
	 \end{aligned}
	\]
	where for fixed $m$ and $\eta$ the contributions to the inner sum are the following:
	\begin{enumerate}
	\item 
	The non-singular terms $\phi^{\underline{\gamma}, \underline{\delta}}_2 (n,\eta)^+(z_0)$ and 
	$\phi^{\underline{\gamma}, \underline{\delta}}_2 (n,\eta)^-(z_0)$ are given by the sum 
	\begin{equation} 
	 \begin{multlined}   
	    \left( -i  \sqrt{\pi}\right)^{n_\gamma  + n_\delta} 
		 \sum_{\ell = 0}^{2q - 2 - n_\gamma - n_\delta} 2^{\frac{\ell + 1}{2}} 
		 P_{\tilde\gamma, \tilde\delta, \ell} (x_{0, +})   
		 \sum_{M=0}^{n_\gamma + n_\delta}  R_{n_\delta, n_\gamma}(\eta,M)
		 \\  \;\cdot\, \sum_{j=0}^{\left\lfloor \frac{M}{2} \right\rfloor} \frac{1}{\pi^j} \sum_{h = 0}^{M -2j}
		 \frac{i^h}{2^{h + 3j}}  
		 \AAm{\eta}^h \left( - \CC_\eta\right)^{M - 2j -h}  \abs{\beta}^{- 2h - 2 j}
		 \binom{M -2 j}{h} \frac{M!}{j! (M-2j)!} 
		 \\ \cdot \mathfrak{a}^{\pm}_\nu(\eta;n) \exp\Bigl( -2\pi i \CC_\eta\left(\hlf{x_0}{x_0}  + n\right) \Bigr).
	 \end{multlined}
	\end{equation}
	The index $\nu$ in the inner sum is given by $\nu = h + j +  \frac12(\ell - n_\gamma - n_\delta -1)$, the attached factors 
	$\mathfrak{a}^{\pm}_\nu(\eta;n)$  take the form 
	\begin{align}
		 \mathfrak{a}^{+}_\nu(\eta;n) & =  \left( \frac{\AAb{\eta}}{\BB_\eta}\right)^{-\frac{\nu}{2}}
		 K_\nu\left( \frac{2\pi}{\abs{\beta}^2} \left(\AAb{\eta}\BB_\eta\right)^{\frac12}\right), \label{eq:phi2pq_h} \\
		  \mathfrak{a}^{-}_\nu(\eta;n)  & =  
           \tfrac{1}{2} \Vint{p+q, 1-\nu}{\pi \left(\frac{ \AAb{\eta}}{\abs{\beta}^2} -2n \right), \frac{\pi \BB_{\eta}}{\abs{\beta}^2}, 4\pi \abs{n}} \quad (n\neq 0).  \label{eq:phi2pq_nh}
	 \end{align} 
         Finally for $n = 0$, the non-holomorphic part $\phi^{\underline\gamma, \underline\delta}(0, \eta)^{-}$
         takes the same form \eqref{eq:phi2pq_h} as   $\phi^{\underline\gamma, \underline\delta}(0, \eta)^{+}$
         but with a shifted index, where $\nu$ is replaced by $\nu - k +1$.
       \item 
	 The terms $\phi^{\underline{\gamma}, \underline{\delta}}_1 (n,\eta)^+(z_0)$ and  
	 $\phi^{\underline{\gamma}, \underline{\delta}}_1 (n,\eta)^-(z_0)$ consist of a  sum 
	 \[
	 	 \left(-i \sqrt{\pi}\right)^{n_\gamma + n_\delta} 
	 	 \beta'^{n\gamma}\bar\beta'^{n_\delta}\sum_{\ell = 0}^{2q - 2 -n_\gamma - n_\delta}
	 	  2^{\frac{\ell}{2}} P_{\tilde\gamma, \tilde\delta, \ell}(x_{0,+})  
	 	  \mathfrak{b}^\pm_\nu(\eta;n),
	  \]
	  wherein the factor $\mathfrak{b}^\pm_\nu (\eta;n)$ with index $\nu = \frac12\left(\ell - n_\gamma - n_\delta\right) -1$ is given by 
	  \[	  
	  \mathfrak{b}^+_\nu(\eta;n) =
	  2\abs{\beta'}^{\nu} \left(2 \abs{ \hlf{x_{0,-}}{x_{0,-}} }\right)^{-\frac{\nu}2} 
	 K_\nu\left( 2\sqrt{2} \pi\abs{\beta'} \abs{\hlf{x_{0,-}}{x_{0,-}}} \right), 
	 \]
	 for the holomorphic term and by 
	 \[
       \mathfrak{b}^{-}_\nu(\eta;n) = 
                     \Vint{p+q,1-\nu}{2\pi\hlf{x_{0,-}}{x_{0,-}}, \pi\abs{\beta'}^2, 4\pi\abs{n}} 
	 \]
	 for the non-holomorphic term if $n\neq 0$. 
	 
     For $n=0$, the contribution for the non-holomorphic part is the same as for the holomorphic part, but with index shifted by $-k+1$, i.e. with $\mathfrak{b}^-_{\nu'}(\eta;0) = \mathfrak{b}^+_{\nu - k+1}(\eta; 0)$. 
\end{enumerate} 
\end{theorem}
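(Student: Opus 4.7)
The plan is to reduce the theorem, via the rank-stratified decomposition established in Section \ref{sec:eval_Kstyle}, to a separate evaluation of the individual integrals $\phi_i^{\underline\gamma,\underline\delta}(n,\eta)^\pm(z_0)$ for fixed orbit representative $\eta$, fixed $n$ and fixed multi-index pair $(\underline\gamma,\underline\delta)$. Summing these contributions with the prefactor $\frac{2i(-1)^{q-1}}{2^{2(q-1)}}$ from Proposition \ref{prop:psipq_mixed} and the factor $2$ that accounts for the $\gamma\mapsto-\gamma$ redundancy on the non-zero orbits (cf.\ \eqref{eq:def_sum_phi}) assembles the stated formula. Since only the inner integrals are non-trivial, the proof amounts to computing them, and the detailed computations are deferred to Section \ref{sec:unfolding}; here I sketch the strategy.

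For the rank-$2$ integrals $\phi_2^{\underline\gamma,\underline\delta}(n,\eta)^\pm(z_0)$, representatives are parametrised by \eqref{eq:param_rk2}, the stabiliser is trivial, and the domain of integration is all of $\Hp$. I would substitute \eqref{eq:psipq_mixed}, together with the Gaussian \eqref{eq:exp_pq_mixed}, and observe that the dependence on $u$ is Gaussian: writing $\bar\tau = u-iv$, one has $\abs{\beta'+\bar\tau\beta}^2 = \abs{\beta' - iv\beta}^2 + 2u\Re(\beta'\bar\beta) + u^2\abs{\beta}^2$. Completing the square in $u$ produces the shift $u\mapsto u - \CC_\eta$ and isolates the expected exponential factor $\exp\bigl(-2\pi i\CC_\eta(\hlf{x_0}{x_0}+n)\bigr)$. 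The remaining $u$-integral is Gaussian and can be expanded by the binomial theorem in powers of $u$; the key combinatorial book-keeping step is to rewrite $(\beta'+\bar\tau\beta)^{n_\delta}(\bar\beta'+\tau\bar\beta)^{n_\gamma}$, after the substitution, by multinomial expansion so that the coefficients group according to the total power $M$ of $\abs{\beta}$, yielding the sum $\sum_M R_{n_\delta,n_\gamma}(\eta,M)$; the subsequent even-moment Gaussian $u$-integrals produce the inner sums $\sum_j \frac{1}{\pi^j}$ and $\sum_h$ with the binomial coefficients $\binom{M-2j}{h}\frac{M!}{j!(M-2j)!}$. What remains is a one-dimensional $v$-integral of the shape
\[
\int_0^\infty v^{-\mu}\Gamma(1-k,4\pi\abs{n}v)\exp\!\left(-\frac{\pi\AAb{\eta}}{\abs{\beta}^2}v - \frac{\pi\BB_\eta}{\abs{\beta}^2}v^{-1}\right)dv,
\]
which is precisely the special function $\Vint{p+q,1-\nu}{\cdot,\cdot,\cdot}$ of \eqref{eq:def_vint} and gives \eqref{eq:phi2pq_nh}; the analogous holomorphic integral (without the incomplete Gamma factor) is a single $K$-Bessel integral by Lemma \ref{lemma:int_besselgamma}, yielding \eqref{eq:phi2pq_h}. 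The index $\nu=h+j+\frac12(\ell-n_\gamma-n_\delta-1)$ is recovered by tracking the $v$-exponents acquired through the $u$-integration against those already present in \eqref{eq:psipq_mixed}.

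For the rank-$1$ integrals $\phi_1^{\underline\gamma,\underline\delta}(n,\eta)^\pm(z_0)$, representatives have $\beta=0$ and the fundamental domain is $\Gamma_\infty\backslash\Hp$. With $\beta=0$ the mixed-model kernel collapses: $\AAp\eta$, $\BB_\eta$, $\CC_\eta$ either vanish or are undefined, the Gaussian in $\tau$ reduces to $\exp(-\pi\abs{\beta'}^2/v)$, and only the $\ell$-sum in \eqref{eq:psipq_mixed} survives. The $u$-integral over $[-\tfrac12,\tfrac12]$ becomes a plain Fourier projection selecting the $n$-th coefficient of $f^\pm$, and the residual $v$-integral is either a direct $K$-Bessel transform (holomorphic case, $n\neq 0$ or $n=0$ automatically) or again a $\Vint{p+q,1-\nu}{\cdot,\cdot,\cdot}$-integral for the non-holomorphic part, with index $\nu=\tfrac12(\ell-n_\gamma-n_\delta)-1$. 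The $n=0$ non-holomorphic coefficient is special because it is attached to $v^{1-k}$ rather than to $\Gamma(1-k,4\pi\abs{n}v)e^{2\pi inu}$: this substitution shifts the exponent of $v$ in the integrand by $1-k$, and consequently shifts the index of the resulting $K_\nu$ by $-k+1$, which is the shift asserted in both parts of the theorem.

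The main obstacle I anticipate is the combinatorial repackaging in the rank-$2$ case: expanding $(\beta'+\bar\tau\beta)^{n_\delta}(\bar\beta'+\tau\bar\beta)^{n_\gamma}$ and the Gaussian factor together, carrying out the Gaussian $u$-integration moment by moment, and then reorganising the result so that it exhibits the exact triple sum over $M$, $j$, $h$ with the prescribed coefficients $R_{n_\delta,n_\gamma}(\eta,M)$, $\frac{i^h}{2^{h+3j}}\AAm{\eta}^h(-\CC_\eta)^{M-2j-h}\abs{\beta}^{-2h-2j}$, while simultaneously tracking $v$-exponents so that the index of the $K$-Bessel function on the output side is correctly $\nu=h+j+\tfrac12(\ell-n_\gamma-n_\delta-1)$. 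I would handle this by treating each of the four cases ($i=1,2$ and $\pm$) as a separate lemma in Section \ref{sec:unfolding}, performing the $u$-integration in closed form using Gaussian moments, and applying Lemma \ref{lemma:int_besselgamma} to the resulting $v$-integrals; the theorem then follows by collecting these lemmas and multiplying by $\frac{2i(-1)^{q-1}}{2^{2(q-1)}}\cdot 2$.
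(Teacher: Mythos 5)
Your proposal matches the paper's proof: the theorem is reduced to the four unfolding lemmas of Section \ref{sec:unfolding}, where the $u$-integral is evaluated as the Fourier transform of a polynomial times a Gaussian (via Lemma \ref{lemma:FT_Bo}, whose heat-operator form $\exp\bigl(\tfrac{i}{8\pi A}\tfrac{d^2}{dt^2}\bigr)$ is exactly your Gaussian-moment expansion after completing the square) and the $v$-integral via the $K$-Bessel representation \eqref{eq:int_bessel} and the special function $\mathcal{V}$, with the same assembly by rank, prefactor, and factor of $2$. The only cosmetic difference is that the holomorphic $v$-integrals are handled by Lemma \ref{lemma:bessel_parts} rather than Lemma \ref{lemma:int_besselgamma}, which is reserved for the incomplete-Gamma terms.
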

\begin{proof}
Follows directly from the calculations in Section \ref{sec:unfolding}. More specifically, the results for the non-singular terms can be found in the Lemmas \ref{lemma:pq_phi2h} and \ref{lemma:pq_phi2nh}, the results for rank 1 terms are found in the Lemmas \ref{lemma:pq_phi1h} and \ref{lemma:pq_phi1nh}. 
\end{proof}

\begin{remark}\label{rmk:if_nuhalf}
  In Theorem \ref{thm:Phipq_atz0}, whenever the index $\nu$ is a half-integer, i.e.\ $\nu\equiv \frac12 \pmod{1}$, the Bessel functions can be expressed through Bessel polynomials.

  For example, for the non-singular terms, if $\frac12\left(\ell- n_\gamma - n_\delta -1  \right) \equiv \frac12 \pmod{1}$, the Bessel function in  \eqref{eq:phi2pq_h} can be replaced by an expression of the form 
	 \begin{equation}\label{eq:rep_nuhalf_h}
	 \frac12 \abs{\beta} \left( \AAb{\eta}\BB_\eta\right)^{-\frac14}
	 h_{\nu'} \left( \left( 2\pi\sqrt{ \AAb{\eta}\BB_\eta} \right) ^{-1}\right)
	 \exp\left( -  \frac{2\pi}{\abs{\beta}^2} \left( \AAb{\eta}\BB_\eta\right)^{\frac12}\right),
       \end{equation}
with  $\nu' = \abs{\nu}-\frac12$. Likewise, in this case, the Bessel functions occurring in the expansion of $\mathcal{V}_{p+q, 1-\nu}$ in  \eqref{eq:phi2pq_nh} can be replaced by a similar expression, with $\nu' = \abs{\nu + r} -\frac12$ and $\AAb{\eta}$ replaced by $\AAb{\eta} + 4\abs{n}\abs{\beta}^2 - 2 n\abs{\beta}^2$.

For the rank 1 terms, if $\frac12\left(\ell- n_\gamma - n_\delta \right) \equiv \frac12 \pmod{1}$,  the Bessel functions in the holomorphic term $\phi_1^{\underline\gamma, \underline\delta}(n, \eta)^+$  can be replaced by expressions of the form
         \[
	 \frac12 \abs{\beta'}^{-\frac12} \left( 2\hlf{x_{0,-}}{x_{0,-}} \right)^{-\frac14}
	 h_{\nu'}\left( \left(2\sqrt{2}\pi\abs{\beta' } \abs{\hlf{x_{0,-}}{x_{0,-}}}^{\frac12} \right)^{-1} \right),
       \]
       with $\nu' = \abs{\nu} - \frac12$. Similarly for the Bessel function in the expansion of $\mathcal{V}_{p+q, 1-\nu}$ of the non-holomorphic terms, but with $\nu' = \abs{\nu + r}  - \frac12$ and $2\hlf{x_0}{x_0}$ shifted by $4\abs{n}$.
\end{remark}

\begin{example}\label{ex:sig_p1}
The lift for signature $(p,1)$ presents an interesting example. Recall that in this signature the Schwartz form is given by
 \[
\widehat\psi_{p,1}(\sqrt{2}(\eta, x_0), \tau) = 
2i \exp\left(-\frac{2\pi}{v}\left[  \abs{\beta' + \bar\tau\beta }^2 + 2v 
\Im\left(\beta' \bar\beta\right) \right]  \right) e^{2\pi i \tau 
	\hlf{x_0}{x_0}} \otimes 1, 
 \]
 so the polynomials and the differential form are trivial.  
 It is thus not surprising that the expressions from Theorem \ref{thm:Phipq_atz0}  simplify considerably.

 For the non-singular terms, the contribution to the lift of  of $f \in \HmfLp{1-p}{L^-}$  due to the holomorphic part $f^+$ is given by  (for fixed $\eta$ and $n$)
  \[
   \begin{aligned}
   \phi_2(n,\eta)^+(z_0)  & = \sqrt{2} \left( \frac{\AAb{\eta}}{\BB_\eta} \right)^{+\frac14}K_{-\frac12} \left( \frac{2\pi}{\abs{\beta}^2} \left({\AAb{\eta}\BB_\eta}\right)^{\frac12}\right)
   e^{-2\pi  i \CC_\eta\left( \hlf{x_0}{x_0} + n\right)} \\
   & = \frac{\sqrt{2}}{2}\abs{\beta} {\BB_\eta}^{-\frac12}
    \exp\left(
      - \frac{2\pi}{\abs{\beta}^2} \abs{\AAp{\eta}}
      \left( \tfrac{1}{2} \BB_\eta \right)^{\frac12} \right)
    e^{-2\pi  i \CC_\eta\left( \hlf{x_0}{x_0} + n\right)},
 \end{aligned}
\]
using \eqref{eq:bessel_12}. Note that here $\AAb{\eta} = \frac12 \AAp{\eta}^2$. The contribution of the  non-holomorphic part $f^-$ (for fixed $\eta$ and $m$) is given by
\[
    \phi_2(n,\eta)^{-} (z_0)  =
    \frac{\sqrt{2}}{2}
    \Vint{\kappa +2, \frac{3}{2}}{\frac{\pi\AAb{\eta}}{\abs\beta^2} - 2\pi n,  \frac{\pi \BB_\eta}{\abs{\beta}^2}, 4\abs{n}\pi}  e^{-2\pi  i \CC_\eta\left( \hlf{x_0}{x_0} + n\right)}.
\]
Since the index $\nu = \frac12$ is a half-integer, after expressing $\mathcal{V}_{\kappa + 2, 3/2}$ as a sum, we can use Bessel polynomials to write $\phi_2(n,\eta)^{-}$ in the form
\[
  \begin{aligned}
  \phi_2(n,\eta)^{-} & (z_0)  =
 {\sqrt{2}} (p-1)! \sum_{r=0}^{p-1} \frac{(4\pi\abs{n})^r}{r!}
\left( \AAb{\eta} + 2\abs{\beta}^2\left( 2\abs{n} - n\right) \right) ^{-\frac{r}{2}} \BB_\eta^{\frac{r-1}{2}} 
\\
 & \qquad \cdot\, h_{\max(0,r-1)}\left(\left[ \frac{2\pi}{\abs{\beta}^2} \left( \left( \AAb{\eta} + 2\abs{\beta}^2\left( 2\abs{n} - n\right) \right) \BB_\eta\right)^{\frac12} \right]^{-1}\right) \\
 &\qquad \cdot \exp\left( - \frac{2\pi}{\abs{\beta}^2} \left( \left( \AAb{\eta} + 2\abs{\beta}^2\left( 2\abs{n} - n\right) \right) \BB_\eta\right)^{\frac12}\right)
e^{-2\pi  i \CC_\eta\left( \hlf{x_0}{x_0} + n\right)}.
  \end{aligned}
\]
The rank 1 terms $\phi_1(n, \eta)^+$ are easily calculated directly. For a \lq one-line\rq\ version of the calculations in Section \ref{subsec:unfold1}, note that the domain of integration is given by $0\leq u \leq 1$ and $0\leq v <\infty$. Thus, the integral over $u$ just picks out the constant term of the Fourier expansion and terms are non-zero only if $\hlf{x_0}{x_0} = -n$ (note that $x_0$ is positive definite). Then, recalling that $\eta = [0, (a,b)^t]$, the integral over $v$ is given by
\begin{equation}\label{eq:s_dep_trick}
  \begin{gathered}
  \phi_1(n,\eta)^+(z_0) 
   =   \CT_{s=0}\left[  \int_{0}^\infty \!\exp\left( -\frac{2\pi}{v} \abs{\beta'}^2\right) v^{-s-2} dv \right] \\
 =   \left({2\pi\abs{\beta'}^2}\right)^{-s-1} \Gamma(s+1)\bigg\vert_{s=0} 
  = \frac{1}{2\pi} \frac{1}{a^2 + b^2} \Gamma(1).
\end{gathered}
\end{equation}
For the contribution of the non-holomorphic part, one has  (for $n\neq 0$) :
\[
  \begin{aligned}
  \phi_1(n,\eta)^{-}(z_0) & =
 & (p-1)! \sum_{r=0}^{(p-1)}
 \frac{\pi^r}{r!} \left( 4 \abs{n}\right)^{\frac{r}{2} - \frac{1}{4}} \abs{\beta'}^{r - \frac{1}{2}}
 h_{r}\left(  \frac{1}{4\pi \abs{\beta'}\abs{n}^{\frac12}}\right)
 e^{ -  4\pi\abs{\beta'}\abs{n}^{\frac12}}.
\end{aligned}
\]
For $n=0$, one has $\phi_1(0,\eta)^-(z_0) =  (2\pi(a^2 + b^2))^{-(p+1)} \Gamma(p+1)$,
from \eqref{eq:int_gamma} with $1-k = \kappa +1 = p$.

\end{example}
\begin{example} \label{ex:sig_q1}
  Another important example is the case of signature $(1,q)$. As we have seen in Proposition \ref{prop:FT_i}, the Schwartz form $\widehat\psi_{1,q}$ takes a fairly simple form,  as there is only one pair of multi-indices, given by $\underline\gamma = \underline\delta = (1, \dotsc, 1)$. Thus $n_\delta = n_\gamma = q-1$ and there  are no polynomials $P_{\tilde\gamma, \tilde\delta}(x_{0, +})$. Hence,
  \[
\Phi(z, f, \psi_{1,q}) =2\cdot \frac{2i(-1)^{q-1}}{2^{2(q-1)}} 
\Bigl( \sum_{i=1}^2 \sum_{\substack{ \eta =  [\beta, \beta'] / \sim\\  \operatorname{rank}(\eta) = i}}
\sum_{n\gg -\infty}\hat{a}^\pm(n)  \phi_i(n,\eta)^\pm   \Bigr)\otimes \Omega_{q-1}\bigl(\underline{1}; \underline{1}\bigr).
\]
We note that index $\nu$ from Theorem \ref{thm:Phipq_atz0} is half-integer and  Remark \ref{rmk:if_nuhalf} applies. Thus, the non-singular terms -- excluding the case $n=0$ for the non-holomorphic part  -- are given by  
\[
\begin{aligned}
  \phi_{2}(n,\eta)^{\pm} (z_0) &  =
  (-\pi)^{q-1}\frac{ \sqrt{2}}{2} \sum_{M=0}^{2(q-1)}  R_{q-1,q-1}(\eta; M) \sum_{j=0}^{\lfloor \frac{M}{2} \rfloor} \frac{1}{\pi^j} \abs{\beta}^{-2(M-j)} \\
  & \cdot \sum_{h = 0}^{M-2j} \frac{i^h}{2^{h +3j}}\AAm{\eta}^h ( - \CC_\eta)^{M-2j-h}  \abs{\beta}^{2h - 2j +1} \binom{M-2j}{h} \frac{M!}{j! (M-2j)!} \\ &  \cdot
  \exp\left(- \frac{2\pi}{\abs{\beta}^2} \left( \AAb{\eta}\BB_\eta\right)^{\frac12} \right)
e\left( - \CC_\eta\left( \hlf{x_0}{x_0} + n \right) \right) \\
 & \quad\cdot
\begin{cases}
 \begin{gathered}[b]
    \AAb{\eta}^{ -\frac{\nu}{2} - \frac14} \BB_\eta^{\frac{\nu}{2} - \frac14} 
h_{\nu'} \left( \frac{\abs{\beta}^2}{2\pi \left( \AAb{\eta}\BB_\eta\right)^{\frac12}}  \right)  
\end{gathered}
& \text{for $\phi^+$},
\\
\begin{gathered}[b]
(q-1)! \sum_{r=0}^{q-1}   \frac{(4\pi\abs{n})^r}{r!}
  \AAb{\eta}^{ -\frac{\nu + r}{2} - \frac14} \BB_\eta^{\frac{\nu+ r}{2} - \frac14} 
h_{\nu''} \left( \frac{\abs{\beta}^2}{2\pi \left( \AAb{\eta}\BB_\eta\right)^{\frac12}}  \right) 
\end{gathered}
& \text{for $\phi^{-}$}, 
\end{cases}
\end{aligned}
\]
wherein 
\[
R_{q-1,q-1}(\eta; M) = \sum_{\substack{0\leq \mu_1,\mu_2 \leq q-1 \\ \mu_1 + \mu_2 = M}}
\abs{\beta}^M \abs{\beta'}^{q-1} \Re\left(\beta'^{-\mu_1} \bar\beta'^{-\mu_2} \right) \binom{q-1}{\mu_1} \binom{q-1}{\mu_2}, 
\]
whilst the indices $\nu$, $\nu'$ and $\nu''$ are given by
\[
\nu = h + j - q - \frac12,  \qquad \nu' =  \abs{\nu} - \frac12, \qquad \nu'' = \abs{\nu + r} -\frac12. \\
\]
So, for example
\[
\nu'' =  
\begin{cases}
q -r - h - j -1  & \text{if}\quad q>h + j + r, \\
r + h + j - q & \text{otherwise.}
\end{cases}
\]
As before, for $n=0$, the terms for the non-holomorphic part (both rank 1 and rank 2) can be obtained from the respective holomorphic term after an index shift  by $-k+1$. 

For the rank 1 terms by the second part of Theorem \ref{thm:Phipq_atz0}, for $n\neq 0$,  one has  the holomorphic term
\[
\phi_1(n,\eta)^+(z_0) = (-\pi)^{q-1} \abs{\beta'}^{q-2} \left(2 \abs{n} \right)^{\frac{q}{2}} 
K_q\left( 2\sqrt{2}\pi \abs{\beta'}\abs{n}^\frac12 \right).  
\] 
Note that the index $\nu = -q$ and that $\hlf{x_{0,-}}{x_{0,-}} = -n$. The non-holomorphic term, again for $n\neq 0$, is given by
\[
\begin{aligned}
\phi_1(n,\eta)^-(z_0)  = &
2(-\pi)^{q-1} \abs{\beta'}^{q-2}(q-1)! \sum_{r=0}^{q-1} 
2^{\frac{q-r}{2}} \abs{\beta'}^{r} \frac{\left( 4\pi\abs{n}\right)^r}{r!}    
\\
& \quad \cdot \left(\Qf{x_{0,-}}  +2 \abs{n} \right)^{\frac{q-1}{2}}  
	K_{q-r}\left(2 \pi \abs{\beta'} \sqrt{2}\sqrt{\Qf{x_{0,-}}  +2 \abs{n} }\right).
\end{aligned}
\]
Finally, for $n=0$, one has
\[ 
\begin{gathered}
\phi_1(0,\eta)^+(z_0) = 
 (-\pi)^{q-1} \abs{\beta'}^{q-2} \left(2 \abs{\Qf{x_{0,-}}} \right)^{\frac{q}{2}} 
K_q\left( 2\sqrt{2}\pi \abs{\beta'}\abs{\Qf{x_{0,-}} }^\frac12 \right) \\
\text{and} \quad
\phi_1(0,\eta)^-(z_0) = 
(-\pi)^{q-1} \abs{\beta'}^{2(q-1)} 
K_0\left( 2\sqrt{2}\pi \abs{\beta'}\abs{\Qf{x_{0,-}} }^\frac12 \right).
\end{gathered}
\]
\end{example}

\section{Determining the Fourier-Jacobi expansion}\label{sec:FJ_exp}
In this section, we determine the Fourier-Jacobi expansion of the lift $\Phi(z_0, f, \psipq)$ for $f\in \HmfLp{k}{L^-}$ by applying the action of the parabolic subgroup to the lift at the base-point using the intertwining operators from Lemma \ref{lemma:op_Pell}.

\subsection{Operation of the parabolic subgroup in \texorpdfstring{$G_L$}{G}} \label{subsec:op_on_Phipq} 

To begin, we study the operation of $P_\ell$ on $\Phi(z_0,f, \psipq)$. For this, keep in mind that the theta function $\Theta(\tau, z; \psipq)$, like in \citep[][]{FH21}, is formed using a factor of $\sqrt{2}$ in $\eta$ and $x_0$, which has to be taken into account in all exponential factors occuring in Lemma \ref{lemma:op_Pell}.

\paragraph{On rank 2 terms:}
Let us first consider the action on the non-singular terms. Recall the definitions of $\BB_\eta$, $\CC_\eta$ and $\AAb{\eta}$ from Notation \ref{not:def_ABC} above.

Clearly, $\BB_\eta$ and $\CC_\eta$ are invariant under the operation of $n(w,r)\in N$ and $\mu \in M$, as they do not depend on $x_0$.  Further, under the operation of $a(t) \in A$,  the expression $\CC_\eta$ is invariant, while $a(t) \BB_\eta = \BB_{t\eta} = t^4\BB_\eta$. The quotient $\BB_\eta^{\frac12}\abs{\beta}^{-2}$ is again invariant.
Quite contrastingly, we have
\[
  \begin{gathered}
  n(w,0)\AAb{\eta}(m,x_0) = \AAb{\eta}(n, x_0 -  \beta w) \quad  (n(w,0) \in N), \\
  a(t) \AAb{\eta}(n , x_0) = \AAb{t \eta}(m, x_0)  \; (a(t) \in A),\qquad
  \mu\AAb{\eta}(n, x_0) = \AAb{\eta}(n, \mu^{-1} x_0) \; (\mu \in M),
\end{gathered}
\]
and similarly for 
$\AAp{\eta}(n, x_0)$ and $\AAm{\eta}(n, x_0)$. Also recall that by part 3.\ of Lemma \ref{lemma:op_Pell} the entire expression has to be multiplied with $t^2$.

The operation of the translations $n(0,r) \in N$  is most easily described: The terms $\phi_2^{\underline{\gamma}, \underline{\delta}}(n, \eta)^\pm$ are just multiplied with a factor (see Lemma \ref{lemma:op_Pell}) of
\[
\exp\left((2\pi i r \Im \left({2\beta'\bar\beta}\right)\right) = \exp\left( -4\pi i ra\alpha \right).
\]
The operation of $n(0,w)$ is more complicated. It affects the polynomials  $P_{\tilde\gamma, \tilde\delta, \ell} (x_{0,+})$. and all factors containing either $\AAb{\eta}$ and $\AAm{\eta}$.
From the last  factor in \eqref{eq:pq_phi2_hol} and \eqref{eq:pq_phi2_nhol},  one has 
\[
  \begin{multlined}
  e\left(- \CC_\eta\left(n + \hlf{x_0 - \beta w}{x_0 - \beta w}\right)\right) = \\
  e\left(- \CC_\eta\left(n + \hlf{x_0}{x_0}\right)\right)
  \cdot e\left( -\CC_\eta\left[ \abs{\beta}^2\hlf{w}{w} - 2\Re\left( \beta \hlf{x_0}{w}\right)\right]\right). 
\end{multlined}
\]
Further, again by Lemma \ref{lemma:op_Pell}, the term $\phi_2^{\underline{\gamma}, \underline{\delta}}(n, \eta)^\pm$ gets a factor of
\[
e\left( \Re\left( 2\beta'\bar\beta\right) \tfrac12\hlf{w}{w} - 2\Re\left( \beta' \hlf{x_0}{w}\right)\right).
\]
Multiplying the two factors, we have
\begin{equation}\label{eq:opw_factor}
  \begin{gathered}
    e\left( \left[- \CC_\eta\abs{\beta}^2 + \Re\left( \beta'\bar\beta\right) \right] \hlf{w}{w} - 2\CC_\eta\Re\left( \beta\hlf{x_0}{w}\right) - 2\Re\left( \beta' \hlf{x_0}{w} \right) \right) \\
    =   e\left(
      \Re\left[
     \left( 2\Re\left( \beta'\bar\beta \right)\abs{\beta}^{-1}  - 2\beta' \right) \hlf{x_0}{w} 
 \right] \right)  \\
= e\left(\Re\hlf{x_0}{w} \left(2 \tfrac{ab}{a }-  2b\right) +2 \alpha \Im\hlf{x_0}{w}\right)  
=  e\left( 2 \alpha\Im\hlf{x_0}{w} \right).
  \end{gathered}
\end{equation}
since $\beta = a>0$ and  $\beta' = b + i\alpha$.

\begin{remark} \label{rmk:p1_groupop}
 For the case of signature $(p,1)$, studied in Example \ref{ex:sig_p1} on p.~\pageref{ex:sig_p1}, we get a somewhat simpler expression for the operation of the elements $n(w,0) \in N$.  Recall that in this signature $\AAb{\eta} = \frac12 \AAp{\eta}^2$. Thus, in the exponential, from \eqref{eq:opw_factor} we have (with $\beta = a$, $\beta' = b + i\alpha$)
 \[
        \exp\left( - \frac{2\pi}{\abs{\beta}^2} \abs{ \AAp{\eta}(x_0 - \beta w)}\left( \tfrac12 \BB_\eta  \right)^\frac12\right) e\left( - \CC_\eta\left( \hlf{x_0}{x_0} + n \right) +2\alpha\Im\hlf{x_0}{w}\right).
  \]
  Considering the first term, since $\BB_\eta = 2\alpha^2 a^2$ and $a>0$ one has
    \begin{gather} \notag
  \exp\left( \mp \frac{2\pi}{\abs{\beta}^2}\left( \tfrac12 \BB_\eta  \right)^{\frac12}
   \left\lvert \AAp{\eta}(x_0) - 2\Re\left( \beta\hlf{x_0}{w} \right) + \abs{\beta}^2\hlf{w}{w} \right\rvert\right)  =  \\ 
\label{eq:p_Aopdiff} 
   \exp\left( - \frac{2\pi}{\abs{\beta}^2} \abs{ \AAp{\eta}(x_0)}\left( \tfrac12 \BB_\eta  \right)^\frac12\right) \cdot \exp\left(-2\pi \epsilon' \frac{\alpha}{a} \left[ a^2 \hlf{w}{w} -2a \Re\hlf{x_0}{w} \right] \right), 
 \end{gather}
 where we have set $\epsilon' \vcentcolon= \operatorname{sign}(\alpha\AAp{\eta}(x_0- a w))$ (recall that $\alpha\neq 0$). 
 Collecting the second factor in \eqref{eq:p_Aopdiff} and the factor $e(-\CC_\eta (n + \hlf{x_0}{x_0} ) +  2\alpha \Im\hlf{x_0}{w})$ we get
\[
  \begin{lgathered}
  e\Bigl( i \epsilon' \alpha \left(a\hlf{w}{w} - 2 \Re\hlf{x_0}{w} \right) + 2\alpha\Im\hlf{x_0}{w} 
  - \frac{b}{a}\left( n + \hlf{x_0}{x_0}\right)\Bigr) \\
  = \begin{cases}
    e\bigl( -\frac{b}{a}\left( n + \hlf{x_0}{x_0}\right) + i\alpha\left( a\hlf{w}{w} - 2\hlf{x_0}{w}\right)  \bigr) & \quad (\alpha\AAp{\eta}(x_0- aw)> 0), \\
    e\bigl( -\frac{b}{a}\left( n + \hlf{x_0}{x_0}\right) + i\alpha\left(- a\hlf{w}{w} + 2\hlf{w}{x_0}\right)  \bigr) & \quad (\alpha\AAp{\eta}(x_0-aw)< 0).
  \end{cases} 
\end{lgathered}
\]
\end{remark}

\paragraph{On rank 1 terms:}
Now, we have
 $\eta = [0, \beta']$ with $\beta' = a + ib$. 
The elements $a(t) \in A$ and $\mu \in M$ operate as usual, so $a(t)\beta' = t\beta'$  and $\mu:  x_0\mapsto \mu^{-1}x_0$.  
The operation of the Heisenberg group is much simpler in this case:
 $n(0,r)$ operates trivially, while  $n(w,0)$ only operates through the multiplicative factor from Lemma \ref{lemma:op_Pell},
 given by
\[
e\left( -\Re\left( \sqrt{2}(a+ib) \hlf{\sqrt{2}x_0}{w}\right) \right) = e\bigl( -2\Re\left( \beta'\hlf{x_0}{w}\right)\bigr). 
\]

\subsection{The Fourier-Jacobi expansion}  \label{subsec:fjexp_pq}

We want to determine the Fourier-Jacobi expansion of $\Phi(z, f, \psi)$, i.e.\ an expansion of the form
\[
\Phi(z,f,\psipq) = \sum_{\kappa \in \Q} c_\kappa (\sigma, \Im\tau) e^{2\pi i \kappa \Re\tau_\ell},  
\]
where $\tau_\ell$ is a coordinate attached to the $\ell$-component. Let us assume that for $z=z_0$, we have $\Re\tau_\ell  = 0$. This can easily be realized through a suitable choice of coordinate in $\Dom$. 
\begin{example}\label{ex:siegel_domain}
	Consider the case of signature $(p,1)$. Here, the Grassmannian $\Dom$ consists of one-dimensional subspaces of $V$. For each element $z \in \Dom$, choose a representative of the form 
	$\mathfrak{z} = \ell' + i\tau_\ell\ell + \sigma$ with $\sigma \in W$. One thus obtains the Siegel domain model of $\Dom$, given by the set
	\[
		\Hll \vcentcolon = \left\{ (\tau_\ell, \sigma) \in \C\times\W\,;\; 2\Im\tau_\ell > \hlf{\sigma}{\sigma} \right\}.
	\]
	The base point in $\Hll$ is given by $(\tau_\ell, \sigma) = (i,0)$, see e.g.~\cite{Hof14,Hof17} for details. We will use the Siegel domain to study this special case in more detail later, see Section \ref{sec:p_one}. 
\end{example}
Consider $z = g z_0$ with $g = n(w,0) n(0,r) a(t) \mu \in G$, and write 
$\Phi(z, f, \psi)$ in the form\footnote{Actually, below we will modify this notation slightly by introducing rescaled coefficients $c_0'$ and  $c_\kappa'$.} 
\[
\Phi(g z_0, f, \psipq) =  c_0(t, w, \mu) + \sum_{\kappa \in \Q^\times} c_\kappa(t, w, r,\mu) e^{2\pi i \kappa r }, 
\]
as a Fourier-Jacobi expansion with $\Re\tau_\ell = r$. 
 
Further, only the non-singular part of the lift, $\Phi_2(z, f, \psi)$ transforms under the action of the center of $N$, while the rank 0 and rank 1 contributions are invariant. Hence, the constant term of the Fourier-Jacobi expansion is given by 
\[
  c_0(t, w, \mu) = \Phi_0(z, f, \psipq) + 2\Phi_1(z, f, \psipq),
\]
and all other terms, for $\kappa > 0$, come from $\Phi_2(z, f, \psipq)$. In this case, $\kappa \neq 0$ is given by (possibly a constant multiple of) $a\alpha$. Hence,
\begin{equation}\label{eq:fjcoeff_m}
  \begin{lgathered}
    c_\kappa(t, w,r, \mu)   = \\ \; 2\cdot \sum_{\underline\gamma, \underline\delta}\sum_b \sum_n \sum_\lambda 
    e\left(-\bar\lambda_2\beta'\right) \left[ a^+(\lambda, n)
      \mathcal{F}(\hat{g})\circ \phi^{\underline{\gamma}, \underline{\delta}}_2(n,\eta)^+  + 
  a^-(\lambda,n)   \mathcal{F}(\hat{g})\circ \phi^{\underline{\gamma}, \underline{\delta}}_2(n,\eta)^- 
\right] 
\end{lgathered}
\end{equation}
These terms, as well as the rank 1 contribution to $c_0(t,w,\mu)$, can be obtained by applying the group operation to the results from Theorem \ref{thm:Phipq_atz0}.
\begin{theorem}\label{thm:FJexp_pq}
 For $z \in \Dom$, denote by $g_z \in G$ an element with $g_z z_0 = z$, and let $t,w,r,\mu$ be the parameters of its  $NAM$ decomposition, i.e.\ $g_z = n(w,0)n(0,r)a(t)\mu$. Then, the Fourier-Jacobi expansion of the singular theta lift $\Phi(z, f, \psi)$  for a weak harmonic Maass form $f\in\HmfLp{k}{L^-}$ is given by
 \[
  \begin{gathered}
  \frac{2^{2(q-1)}}{2i(-1)^{q-1}} 
    \Phi( z, f, \psipq) = 
     \frac{2^{2(q-1)}}{2i(-1)^{q-1}} 
    \Phi(g_z z_0, f, \psipq)
    = c_0'(t,w, \mu) + \sum_{\kappa \in \Q^\times} c_\kappa'(t,w,\mu)
    e^{2\pi i \kappa r}, 
  \end{gathered}
\]
where for $\kappa\neq 0$ the Fourier-Jacobi coefficients can be written in the form
\begin{equation}\label{eq:fj_pq_rk2} 
  c_\kappa'(t, w,  \mu) = 2 \sum_{\substack{a, \alpha \\ a\alpha = \kappa}}   \sum_{\underline\gamma, \underline\delta}
\Bigl(
  \sum_{b} \sum_n \sum_\lambda A_\kappa^{\underline\gamma, \underline\delta}\left(n, \lambda, \left(\begin{smallmatrix} a & b  \\ 0 & \alpha \end{smallmatrix}\right)\right)(t, w,  \mu)e(- \bar\lambda_2 \beta')\Bigr) \otimes \Omega_{q-1}(\underline\gamma, \underline\delta),
\end{equation}
while the constant coefficient $c_0'(t,w,\mu)$ consists of a contribution of rank 1 terms, which can be written in the form
\begin{equation}\label{eq:fj_pq_rk1} 
      2\sum_{\underline\gamma, \underline\delta}\Bigl( \sum_{a, b} \sum_n \sum_\lambda
      B^{\underline\gamma, \underline\delta}\left(n, \lambda, \left(\begin{smallmatrix} a \\ b \end{smallmatrix}\right) \right)(t, w,  \mu)
      e(-\bar\lambda_2\beta') \Bigr) \otimes \Omega_{q-1}(\underline\gamma, \underline\delta)
\end{equation}
and, if $p>1$, a contribution of the $0$-orbit, which we omit. (However, see Corollary \ref{cor:FJ_exp_pone} for the case of signature $(p,1)$.)
The initial factor of $2$ in \eqref{eq:fj_pq_rk1} and \eqref{eq:fj_pq_rk2} is due to the trivial operation of $-1_2\in \Gamma$, as usual. 

The coefficients in \eqref{eq:fj_pq_rk1} and \eqref{eq:fj_pq_rk2} are given as follows\footnote{Since the notation is already quite heavy, we have suppressed the $\mu$-dependence here, writing $x_0$ instead of $\mu'^{-1} x_0$.}: 
For the rank 1 contributions to the constant term one has (as usual $\beta' = \left(\begin{smallmatrix} a \\ b 
\end{smallmatrix}\right)$) 
\begin{equation} \label{eq:mrk1_coeff}
\begin{aligned} 
  B^{\underline\gamma, \underline\delta} & (n,\lambda, \beta')  (t,w,\mu)  =  \\
 	  & \; \left(-i \sqrt{\pi}\right)^{n_\gamma + n_\delta} 
 	 t^{n_\gamma + n_\delta + 2}\beta'^{n\gamma}\bar\beta'^{n_\delta}
         \sum_{\ell = 0}^{2q - 2 -n_\gamma - n_\delta}
          2^{\frac{\ell}{2} + 1} P_{\tilde\gamma, \tilde\delta, \ell}(x_{0,+})  e\left( - \Re\left( \beta' \hlf{x_0}{w} \right)\right).
         \\ & \quad \cdot 
         \Bigl[ 
          a^+(\lambda,n) B^+_n(\beta'; t) + a^-(\lambda, n) B^-_n(\beta'; t) 
        \Bigr], 
 \end{aligned}
\end{equation}
where $B^+_n(\beta'; t)$ and $B^-_n(\beta', t)$  denote contributions of the holomorphic and the non-holomorphic terms, respectively. Setting
$\nu = \frac12 (\ell - n_\gamma + n_\delta)$, they are given by 
\[
  \begin{gathered}
    B^+_n(\beta'; t) =  t^\nu\abs{\beta'}^{\nu} \left(2 \abs{\hlf{x_{0,-}}{x_{0,-}} }\right)^{-\frac\nu2} 
    K_\nu\left( 2\sqrt{2}\pi t\abs{\beta'} \abs{\hlf{x_{0,-}}{x_{0,-}}}^\frac12 \right), \\
    B^-_n(\beta'; t) = \tfrac12 a(\lambda, n)^- \Vint{p+q, 1 - \nu}{2\pi\hlf{x_{0,-}}{x_{0,-}}, \pi t^2\abs{\beta'}^2, 4\pi \abs{n}} \quad(n\neq 0), \\
    B^-_0(\beta'; t) =  t^{\nu -k +1}\abs{\beta'}^{\nu-k+1}
          \left(2 \abs{\hlf{x_{0,-}}{x_{0,-}} } \right)^{\frac{k - \nu -1}{2}}  
          \cdot K_{\nu - k +1}\left( 2\sqrt{2} \pi t\abs{\beta'}
            \abs{\hlf{x_{0,-}}{x_{0,-}}}^{\frac12} \right).
  \end{gathered}
\]
The coefficients for $\kappa \in \Q^\times$, coming from the contributions of the rank 2 terms  are given as follows (with $ \eta = \left(\begin{smallmatrix} a & b  \\ 0 & \alpha \end{smallmatrix}\right)$):
\begin{equation}\label{eq:mrk2_coeff}
\begin{aligned} 
A_\kappa^{\underline\gamma, \underline\delta} & \left(n, \lambda,  \eta \right) = \\
 &	  \left( -i  \sqrt{\pi}\right)^{n_\gamma  + n_\delta} t^{n_\gamma + n_\delta +2} 
		 \sum_{\ell = 0}^{2q - 2 - n_\gamma - n_\delta} 2^{\frac{\ell + 1}{2}} 
		 P_{\tilde\gamma, \tilde\delta, \ell} (x_{0, +} - \beta w)   
		 \sum_{M=0}^{n_\gamma + n_\delta}  R_{n_\delta, n_\gamma}(\eta,M) \sum_{j=0}^{\left\lfloor \frac{M}{2} \right\rfloor}\frac{1}{\pi^j} 
		 \\ &  \cdot  \sum_{h = 0}^{M -2j}
		 \frac{i^h}{2^{h + 3j}}  
		 \AAm{t\eta}^h(x_0 - \beta w) \left( - \CC_\eta\right)^{M - 2j -h}  t^{-2h-2j}\abs{\beta}^{- 2h - 2 j}
		 \binom{M -2 j}{h} \frac{M!}{j! (M-2j)!}  \\
&             \cdot e\Bigl( - \CC_\eta\left(\hlf{x_0}{x_0}  + n\right)
                + 2\alpha\Im\hlf{x_0}{w}  \Bigr)  \cdot \Bigl[ a^+(\lambda,n) A^+_n(\eta; t, w) + a^-(\lambda, n) A^-_n(\eta; t, w)\Bigr], 
\end{aligned}  
\end{equation}
wherein $A^+_n(\eta; t, w)$ and $A^-_n(\eta; t,w)$ denote contributions which come from the holomorphic and non-holomorphic terms, and are given by
\[
\begin{gathered}
A^+_n(\eta; t, w) = t^{2\nu}\left(\frac{\AAb{t\eta}(x_0 - \beta w)}{\BB_\eta}\right)^{-\frac{\nu}{2}}
K_\nu \left( \frac{2\pi}{ \abs{\beta}^2 } \left( \AAb{t\eta}(x_0 - \beta w) \BB_{\eta}\right)^{\frac12} \right),  \\
A^+_n(\eta; t, w) = \tfrac{1}{2} \Vint{p+q, 1 - \nu}{\pi\left( \frac{\AAb{t\eta}(x_0 - \beta w)}{t^2\abs{\beta}^2} -2n\right), \frac{\pi}{t^2\abs{\beta}^2}\BB_{t\eta}, 4\abs{n}\pi} \quad (n\neq 0),\\
A^+_0(\eta; t, w) = t^{2(\nu - k+1)}\left(\frac{\AAb{t\eta}(x_0 - \beta w)}{\BB_\eta}\right)^{-\frac{\nu- k +1}{2}}
	 K_{\nu - k+ 1} \left( \frac{2\pi}{ \abs{\beta}^2 } \left( \AAb{t\eta}(x_0 - \beta w) \BB_{\eta}\right)^{\frac12} \right). 
\end{gathered}
\]
   \end{theorem}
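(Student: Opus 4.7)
The strategy is to take the formula for $\Phi(z_0, f, \psipq)$ from Theorem \ref{thm:Phipq_atz0} and push it forward from the base point along $g_z = n(w,0)n(0,r)a(t)\mu$ by applying the intertwining operators catalogued in Lemma \ref{lemma:op_Pell}. Since $\Phi(z, f, \psipq) = \Phi(g_z z_0, f, \psipq)$ and $g_z$ acts on the Schwartz datum via the product of its four factors, each individual piece $\phi_i^{\underline\gamma,\underline\delta}(n,\eta)^\pm$ can be transported separately and the resulting sums reassembled. In particular we already know from the case analysis in Section \ref{subsec:op_on_Phipq} how each of $n(w,0)$, $n(0,r)$, $a(t)$ and $\mu$ affects the building blocks $\AAp{\eta}$, $\AAm{\eta}$, $\AAb{\eta}$, $\BB_\eta$, $\CC_\eta$ and the polynomials $P_{\tilde\gamma,\tilde\delta;\ell}$, so the proof is essentially a matter of bookkeeping.

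The plan is as follows. First, apply $a(t)$: by part (3) of Lemma \ref{lemma:op_Pell} it rescales $\eta \mapsto t\eta$ and produces an overall factor of $t^2$, which is what creates the $t$-powers in \eqref{eq:mrk1_coeff} and \eqref{eq:mrk2_coeff} (together with those coming from $v^{(\ell - n_\gamma - n_\delta)/2}$ in Proposition \ref{prop:psipq_mixed}, accounting for the exponent $n_\gamma + n_\delta + 2$). Next, $\mu \in M$ acts only through $x_0 \mapsto \mu^{-1}x_0$ by part (4), which is why, as noted in the footnote, we may suppress the $\mu$-dependence and write $x_0$ for $\mu'^{-1}x_0$. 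Next, $n(w,0)$ acts by shifting $x_0 \mapsto x_0 - \beta w$ in the polynomials, in $\AAm{\eta}$, and in $\AAb{\eta}$ (as noted in Section \ref{subsec:op_on_Phipq}), and it contributes a multiplicative phase; the computation carried out in \eqref{eq:opw_factor} shows that this phase, combined with the $e\bigl(-\CC_\eta(\hlf{x_0}{x_0}+n)\bigr)$ already present, collapses to $e\bigl(-\CC_\eta(\hlf{x_0}{x_0}+n) + 2\alpha\Im\hlf{x_0}{w}\bigr)$, precisely as in \eqref{eq:mrk2_coeff}. Finally, $n(0,r)$ acts only on the rank $2$ piece by a multiplicative factor $e(-4\pi i r a\alpha) = e(2\pi i \kappa r)$ with $\kappa = -2a\alpha$ (up to our normalization convention), while leaving the rank $0$ and rank $1$ pieces invariant.

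From the last observation the Fourier-Jacobi separation falls out automatically: all rank $2$ contributions are grouped by the value of $\kappa = a\alpha$ (times a fixed constant) to produce the coefficients $c_\kappa'(t,w,\mu)$ for $\kappa \neq 0$, while the rank $0$ and rank $1$ contributions combine into $c_0'(t,w,\mu)$. For the rank $1$ coefficient \eqref{eq:fj_pq_rk1} the same $a(t)$, $\mu$ and $n(w,0)$-analysis applies, but now $n(w,0)$ contributes only the scalar $e(-2\Re(\beta'\hlf{x_0}{w}))$ of Section \ref{subsec:op_on_Phipq}, since $x_0$ does not appear in the rank $1$ building blocks in a way that shifts; this reproduces the rank $1$ formula displayed in the theorem. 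The coefficients $\hat a^\pm(n)$ from \eqref{eq:ahatn} expand as $\sum_\lambda a^\pm(\lambda,n)e(-\bar\lambda_2 \beta')$, which explains the inner sum over $\lambda$ in both \eqref{eq:fj_pq_rk1} and \eqref{eq:fj_pq_rk2}. The rank $0$ contribution to $c_0'$ is omitted as flagged in Remark \ref{rmk:no_rkzero_term}.

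The main obstacle is purely notational: keeping the many indices $(\underline\gamma,\underline\delta,\ell,M,j,h,\nu)$ and the $\sqrt 2$-rescalings of $(\eta,x_0)$ coming from the theta function consistent when composing the four intertwiners in Lemma \ref{lemma:op_Pell}. There is no additional analytic content beyond what Theorem \ref{thm:Phipq_atz0} and Section \ref{subsec:op_on_Phipq} have already established; once the transformation rules for $\AAb{\eta}$, $\BB_\eta$, $\CC_\eta$ are inserted into the expressions for $\mathfrak{a}^{\pm}_\nu$ and $\mathfrak{b}^{\pm}_\nu$ from Theorem \ref{thm:Phipq_atz0}, one reads off the stated formulas for $A^\pm_n(\eta;t,w)$ and $B^\pm_n(\beta';t)$, completing the proof.
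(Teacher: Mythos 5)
Your proposal is correct and follows essentially the same route as the paper: the paper's proof likewise defines $A_\kappa^{\underline\gamma,\underline\delta}$ and $B^{\underline\gamma,\underline\delta}$ as the images of $\phi_2^{\pm}$ and $\phi_1^{\pm}$ under $\mathcal{F}(\hat g)$ and then invokes Theorem \ref{thm:Phipq_atz0} together with the transformation rules for $\AAb{\eta}$, $\BB_\eta$, $\CC_\eta$ and the phase computation \eqref{eq:opw_factor} from Section \ref{subsec:op_on_Phipq}, exactly as you do. Your only slightly imprecise point is attributing the exponent $n_\gamma+n_\delta+2$ of $t$ to the factor $v^{(\ell-n_\gamma-n_\delta)/2}$ rather than to the scaling of the polynomial prefactor $(\beta'+\bar\tau\beta)^{n_\delta}(\bar\beta'+\bar\tau\bar\beta)^{n_\gamma}$ under $\eta\mapsto t\eta$, but this does not affect the argument.
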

\begin{proof}
After setting
	\[
	\begin{aligned}
          A_k^{\underline\gamma, \underline\delta}(n, \lambda, \eta)(t,w,r,\mu) &\vcentcolon =
          a^+(\lambda, n)
	\mathcal{F}(\hat{g})\circ \phi^{\underline{\gamma}, \underline{\delta}}_2(n,\eta)^+  + 
	a^-(\lambda, n)   \mathcal{F}(\hat{g})\circ \phi^{\underline{\gamma}, \underline{\delta}}_2(n,\eta)^- , \\
	 B^{\underline\gamma, \underline\delta}(n, \lambda, \eta)(t,w,\mu)   & \vcentcolon = 
	 a^+(\lambda, n) \mathcal{F}(\hat{g})\circ \phi_1^{\underline{\gamma}, \underline{\delta}}(n,\eta)^+  + 
	 a^-(\lambda, n) \mathcal{F}(\hat{g}) \circ \phi_1^{\underline{\gamma}, \underline{\delta}}(n,\eta)^-,
	 \end{aligned}
	\]
the results follow directly from Theorem \ref{thm:Phipq_atz0} and the consideration concerning the group operation on the terms $\phi_i(n,\eta)^{\pm}$ from the beginning of this Section (see pp.~\pageref{subsec:op_on_Phipq}), including the factor of $t^2$ from the intertwining operation of $a(t)$ from Lemma \ref{lemma:op_Pell}.3. 
\end{proof}

\begin{corollary}\label{cor:FJ_exp_pone}
  In signature $(p,1)$ the Fourier-Jacobi  expansion of $\frac{1}{2i}\Phi(z, f, \psi_{p,1})$ (which now is equal to $\Phi(z,f,\varphi_0)$) takes the form
  \[
    c_0'(t,w) + \sum_{\kappa \in \Q^\times} c_\kappa'(t,w) e^{2\pi i \kappa \Re\tau_\ell},
  \]
   where the constant term $c_0'(t,w)$ is given by
 \[
    \begin{aligned} 
      c_0'(t,w)  =  & \; 4\pi I_0
      +  2\cdot t^2 \sum_{\beta' = (a,b)} 
      \sum_\lambda  \biggl[
      a^-(\lambda, 0)  \frac{1}{\left( 2\pi t^2 \abs{\beta'}\right)^{p+1}} \Gamma(p+1) \;+ \\
     & \;  \sum_{n\neq 0} \Bigl( a^+(\lambda, n)  \frac{1}{2 \pi t^2\abs{\beta'}^2}
     +      a^-(\lambda, n) (p-1)! \sum_{r = 0}^{p-1} \frac{\pi^r}{r!} (4\abs{n})^{\frac{r}{2} - \frac{1}{4}} t^{r - \frac12} \abs{\beta'}^{r-\frac12}
     \\ & \qquad \cdot h_r\left( \frac{1}{4\pi t \abs{\beta'}\abs{n}^{\frac12}}  \right) e^{-4\pi t\abs{\beta'}\abs{n}^\frac12}\Bigr) \biggr]  
   \cdot e\Bigl( -  \lambda_2 \beta' - 2\Re\left(\beta' \hlf{x_0}{w}\right) \Bigr),
    \end{aligned}
  \]
  with a rational\footnote{In fact, $I_0$ is an integer for $f\in\MfLw{k}{L^-}$.} constant $I_0$, which can be evaluated using the methods of \cite{Bo98}, see \cite{K16}.
  The coefficients $c_\kappa'(t,w)$ $(\kappa>0)$ take the form  
  \[
      c_\kappa'(t,w)  = 2\cdot\sum_{a,b}\sum_m\sum_\lambda A_\kappa(n,\lambda, [\beta, \beta'])(t,w) e^{-2\pi i \lambda_2\beta'}.
\]
wherein 
\[
\begin{multlined}
   A_\kappa(n, \lambda, [\beta, \beta'])(t,w) = \left( a^+(\lambda, n) \frac{\sqrt{2}}{2} 
   \frac{ t\abs{\beta}}{\BB_\eta^{\frac12}} + a^-(\lambda, n) A^-_n(\eta; t, w) \right) \\
    \cdot
   \exp\left( - 2\frac{\pi}{\abs{\beta}^2}
\abs{\AAp{t\eta}(x_0 - \beta w ) }\left(\tfrac12\BB_\eta\right)^{\frac12}
-  2\pi i\bigl[  \CC_{\eta}\left(\hlf{x_0}{x_0} + n\right) + 2\alpha\Im\hlf{x_0}{w} \bigr]\right),
\end{multlined}
\]
with a non-holomorphic term $A^-_n(\eta; t,w)$, given by
\[
\begin{multlined}
\sqrt{2}
(p-1)!\sum_{r=0}^{p-1}
\frac{(4\abs{n}\pi)^r}{r!} t^{2r + 1 + 2} \abs{\beta} \BB_{ \eta }^{\frac{r-1}2 } 
\left(
\tfrac12 \AAp{t\eta}^2(x_0 - \beta w) -  2 t^2 \abs{\beta}^2 \left(2\abs{n} - n\right) \right)^{-\frac{r}{2}} \\
\cdot
h_{\max\{0,r-1\}}\left(
\frac{\abs{\beta}^2}{2\pi} 
\left(  \left( \tfrac12\AAp{t\eta}(x_0 - \beta w) + 2t^2\abs{\beta}^2( 2\abs{n} - n) 
	\right)   \BB_\eta
	\right)^{-\frac12}   
\right).          
 \end{multlined} 
\]
\end{corollary}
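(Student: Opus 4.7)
The corollary is essentially a bookkeeping specialization of Theorem~\ref{thm:FJexp_pq} to signature $(p,1)$, combined with the direct evaluations of the individual terms $\phi_i(n,\eta)^{\pm}(z_0)$ carried out in Example~\ref{ex:sig_p1} and the intertwiner computations from Section~\ref{subsec:op_on_Phipq}. The plan is therefore to substitute $q=1$ into the general formula and simplify, handling separately the rank~0, rank~1, and rank~2 contributions.

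\textbf{Rank 2 contributions.} Setting $q=1$ forces the multi-indices $\underline\gamma, \underline\delta$ to be empty, so $n_\gamma = n_\delta = 0$, the polynomial factors $P_{\tilde\gamma,\tilde\delta,\ell}$ collapse to the constant (only $\ell = 0$ contributes), and the differential form part reduces to $\Omega_0 \equiv 1$. In the rank~$2$ sum the parameters $R_{0,0}(\eta,0)=1$ and $M=j=h=0$, so the general coefficient $A^{\underline\gamma,\underline\delta}_\kappa$ in \eqref{eq:mrk2_coeff} collapses to a single term. The holomorphic contribution at $z_0$, namely the simple closed form
\[
\phi_2(n,\eta)^+(z_0) = \tfrac{\sqrt{2}}{2}\abs{\beta}\BB_\eta^{-\frac12} \exp\bigl(-\tfrac{2\pi}{\abs{\beta}^2}\abs{\AAp{\eta}}\bigl(\tfrac12\BB_\eta\bigr)^{\frac12}\bigr) e(-\CC_\eta(\hlf{x_0}{x_0}+n)),
\]
from Example~\ref{ex:sig_p1}, is then moved to a general point via the intertwiners of Lemma~\ref{lemma:op_Pell}: I would track (i)~the overall factor $t^2$ from $a(t)$, (ii)~the substitution $\abs{\beta}\mapsto t\abs{\beta}$ and $\AAp{\eta}\mapsto \AAp{t\eta}$ while $\BB_\eta$ is kept unrescaled in the notation, (iii)~the replacement $x_0 \mapsto x_0 - \beta w$ together with the exponential factor $e(2\alpha\Im\hlf{x_0}{w})$ derived in \eqref{eq:opw_factor} and Remark~\ref{rmk:p1_groupop}, and (iv)~the centre-action $n(0,r)$ which supplies the oscillation $e(2\pi i\kappa r)$ indexed by $\kappa \in \Q^\times$ proportional to $a\alpha$. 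The analogous passage for $a^-(\lambda,n)$ uses the Bessel-polynomial rewriting of $\mathcal{V}_{p+q,3/2}$ from Example~\ref{ex:sig_p1}, and becomes the formula for $A^-_n(\eta;t,w)$.

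\textbf{Rank 1 contributions.} For $\eta = [0,\beta']$ the stabilizer is $\SL_2(\Z)_\infty$ and $\beta=0$, so in signature $(p,1)$ the negative-definite subspace of $W$ is trivial; consequently the general Bessel expression $\mathfrak{b}^+_\nu$ from Theorem~\ref{thm:Phipq_atz0} degenerates and one must use the direct evaluation \eqref{eq:s_dep_trick}, giving $\phi_1(n,\eta)^+(z_0) = (2\pi\abs{\beta'}^2)^{-1}$ for $n$ with $\hlf{x_0}{x_0}=-n$ and zero otherwise, along with the Bessel-polynomial expression for $\phi_1(n,\eta)^-$. Since $\beta=0$, the translation $n(0,r)$ acts trivially (the factor in Lemma~\ref{lemma:op_Pell}.1 vanishes) and $n(w,0)$ acts only by the exponential factor $e(-\Re(\beta'\hlf{x_0}{w}))$ from the same Lemma; the prefactor $t^2$ of $a(t)$ and the rescaling $\abs{\beta'}\mapsto t\abs{\beta'}$ yield the coefficients displayed in the constant term. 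The factor of $2$ in front of the non-singular sum in the constant term is the usual doubling from the $\pm 1_2$ symmetry, as explained after \eqref{eq:def_sum_phi}.

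\textbf{Rank 0 contribution.} The remaining term $\Phi_0(z,f,\psi_{p,1})$ is a convolution of $f$ with the definite theta series of the positive-definite lattice $L\cap W$ (here $p>1$ makes $W$ genuinely nontrivial, while the $p=1$ case is already clear from Example~\ref{ex:sig_q1}). I would invoke Lemma~\ref{lemma:Bprod_rank01}, which packages Borcherds' method from \cite[Sec.~9]{Bo98} as adapted by Kudla in \cite{K16}, to conclude that $\Phi_0$ contributes a rational (in fact integral when $f\in\MfLw{k}{L^-}$) additive constant, which is conventionally written as $4\pi I_0$. This contribution is independent of $t,w,r,\mu$ and thus enters only the $\kappa=0$ coefficient.

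\textbf{Main obstacle.} The delicate point is the appearance of the absolute value $\abs{\AAp{t\eta}(x_0-\beta w)}$ in the exponent: under the $n(w,0)$-action the sign of $\AAp{t\eta}$ may flip, and the sign $\epsilon'=\operatorname{sign}(\alpha\AAp{t\eta}(x_0-\beta w))$ interacts with the exponential factor $e(2\alpha\Im\hlf{x_0}{w})$ as in Remark~\ref{rmk:p1_groupop}; one must check that the resulting expression is the one displayed in the corollary irrespective of this sign. The remaining steps are routine substitutions and collecting the Fourier coefficients $a^\pm(\lambda,n)$ with the character factor $e(-\lambda_2\beta')$ from \eqref{eq:ahatn}.
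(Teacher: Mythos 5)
Your proposal is correct and follows essentially the same route as the paper, whose proof of this corollary is precisely "specialize to $q=1$ via Example \ref{ex:sig_p1}, apply the intertwiners of Lemma \ref{lemma:op_Pell} as worked out in Remark \ref{rmk:p1_groupop} (including the sign $\epsilon'$ bookkeeping you flag), and keep track of the $t^2$ factor from $a(t)$." Your additional observations — that the rank~1 Bessel expression degenerates because $W$ is positive definite so one must use the direct evaluation \eqref{eq:s_dep_trick}, and that the rank~0 term contributes the constant $4\pi I_0$ via the Borcherds--Kudla method — match the paper's treatment exactly.
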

\begin{proof}
Follows directly from Example \ref{ex:sig_p1} and Remark \ref{rmk:p1_groupop} after taking into account the factors of $t^2$ from the intertwining operation of $a(t)$. 
\end{proof}

\paragraph{The case of signature $(1,q)$.}
In this case, as $p=1$, there is no contribution from the $0$ orbit, since $\widehat\psi_{1,q}((0,x_0), \tau) = 0$. In other words, $\Phi_0(z, f, \psi_{1,q}) \equiv 0$.
Hence, Theorem \ref{thm:FJexp_pq} gives the complete Fourier-Jacobi expansion in this case. Thus, with Example \ref{ex:sig_q1} we get the following.
\begin{corollary} \label{cor:FJ_oneq}
  In signature $(1,q)$, the singular theta lift of a weak harmonic Maass form $f \in \HmfL{k}{L^-}$ with Fourier coefficients $a^+(\lambda, m)$ and $a^-(\lambda, m)$ has the following Fourier-Jacobi expansion
  \[
     \frac{2^{2(q-1)}}{2i(-1)^{q-1}}  \Phi(z, f, \psi_{1,q}) =
      \left( c_0'(t,w) +
        \sum_{\kappa \in \Q^\times} c_\kappa'(t,w) e^{2\pi i \kappa r} \right)
      \otimes \Omega_{q-1,q-1}(\underline{1}, \underline{1}), \\
\]
    with  
    \[
    \begin{aligned}
        c_0'(t,w) &= 2\cdot\sum_{ab}\sum_{n}\sum_\lambda
        B^{\underline{1}, \underline{1}}
          \left(m,\lambda, \left( \begin{smallmatrix}a \\ b 
              \end{smallmatrix}
              \right) \right)(t,w)
          e\left(-\lambda_2 (a +ib)\right) \quad\text{and}\\
          c_\kappa'(t,w) & =
          2\cdot\sum_{\substack{a,\alpha \\ a\alpha = \kappa }} \sum_b \sum_m\sum_\lambda
          A^{\underline{1}, \underline{1}}_\kappa\left(m, \lambda, \left(\begin{smallmatrix} a & b \\ & \alpha\end{smallmatrix}\right) \right)(t,w)   e\left(-\lambda_2 (a +ib)\right) \quad(\kappa\neq 0). 
    \end{aligned}
  \]
 For the coefficients $B^{\underline{1}, \underline{1}}$ we destinguish between the cases $n\neq0 $ and $n=0$. In the case 
 \begin{multline*}
   B^{\underline{1}, \underline{1}}(n,\lambda, \left(\begin{smallmatrix} a \\ b 
 \end{smallmatrix}\right))(t,w,\mu) = 
 (- \pi)^{q-1}  \Bigl[ 
 a^+(\lambda, n) t^{q}\abs{\beta'}^{q-2}\left(2 \abs{n}\right)^{\frac{q}{2}} K_{q}\left(2 \sqrt{2} \pi \abs{\beta'} \abs{n}^{\frac12} \right)  \\  
+ a^-(\lambda, n)  t^{2(q-1)} \abs{\beta}^{2(q-1)} \Vint{q+1, 1- q}{2\pi \Qf{x_{0,-}}, \pi t^2\abs{\beta'}^2, 4\pi\abs{n}}
 \Bigr] e\left( - \Re\left(\beta' \hlf{x_0}{w} \right)\right), 
\end{multline*} 
while in the second case $(n=0)$, 
\begin{multline*}
B^{\underline{1}, \underline{1}}(0,\lambda, \left(\begin{smallmatrix} a \\ b 
\end{smallmatrix}\right))(t,w,\mu) = 
(- \pi)^{q-1} 2  t^{q} \abs{\beta'}^{q-2} e\left( - \Re\left(\beta' \hlf{x_0}{w} \right)\right) 
\\ \Bigl[ a^+(\lambda, 0) \abs{\Qf{x_{0,-}}} ^{\frac{q}{2}}
K_{q}\left(2^{\frac32}   \pi \abs{\beta'} \abs{\Qf{x_{0,-}}}^{\frac12} \right)
+ a^-(\lambda, 0)  t^{q} \abs{\beta}^{q} 
K_{0}\left(2^{\frac32}  \pi \abs{\beta'} \abs{\Qf{x_{0,-}}}^{\frac12} \right)
\Bigr]. 
\end{multline*} 

The coefficients $A^{\underline{1}, \underline{1}}_\kappa$ are given as follows (with $\eta = \left(\begin{smallmatrix} a & b \\ & \alpha\end{smallmatrix}\right)$):
 \begin{multline*} 
 A^{\underline{1}, \underline{1}}_{\kappa}\left(n,\lambda, \eta \right)  = (-\pi)^{q-1}  t^{2(q-1)} 
\frac{\sqrt{2}}{2}\,
\sum_{M=0}^{2(q-1)}  R_{q-1, q-1}(\eta,M)
\\  \;\cdot\, \sum_{j=0}^{\left\lfloor \frac{M}{2} \right\rfloor} \frac{1}{\pi^j} \sum_{h = 0}^{M -2j}
\frac{i^h}{2^{h + 3j}}  
\AAm{t\eta}^h(x_0 - \beta w) \left( - \CC_\eta\right)^{M - 2j -h}  t^{2h-2j}\abs{\beta}^{- 2h - 2 j}
\binom{M -2 j}{h} \frac{M!}{j! (M-2j)!}  \notag
\\
\cdot \Bigl[ a^+(\lambda, n) A^+_n( \eta; t,w)  + a^-(\lambda, n) A^-_n(\eta; t,w )  \Bigr] \cdot e\Bigl( - \CC_\eta\left(\hlf{x_0}{x_0}  + n\right) + 2\alpha\Im\hlf{x_0}{w} \Bigr),
\end{multline*}
where $A^+_n$, the contribution from the holomorphic part, takes the form  
\begin{multline*} 
A^+_n( \eta; t,w) =  t^{2\nu-1}\left({\AAb{t\eta}(x_0 - \beta w)}\right)^{-\frac{\nu}{2} - \frac14}
 {\BB_\eta}^{\frac{\nu}{2}-\frac14}
 \exp\left(-\frac{2\pi}{\abs{\beta}^2} \left( \AAb{t\eta}(x_0 - \beta w) \BB_\eta\right)^{\frac12}\right) \\
\cdot h_{\nu'} \left( \frac{\abs{\beta}^2 }{ 2\pi  \left( \AAb{t\eta}(x_0 - \beta w) \BB_\eta\right)^{\frac12} }\right), 
\end{multline*}
with indices $\nu = h+j+1-q$ and $\nu' = \abs{\nu} - \frac12  = \operatorname{max}(q-1-h-j, h+j-q)$. The contribution of the non-holomorphic part, $A^-_n$, is given by 
\begin{multline*}
A^-_n( \eta; t,w )  = 
(q-1)!
\sum_{r=0}^{q-1}
\frac{\left(4\pi\abs{n}\right)^r}{r!}
\left(\AAb{t\eta}(x_0 -\beta w)  + 2t^2\abs{\beta}^2 \left( 2\abs{n} - n \right) \right)^{-\frac{\nu}{2} - \frac{r}{2} -\frac14}
\\ \cdot \left(t^4\BB_\eta\right)^{\frac{\nu}{2} + \frac{r}{2} - \frac14}  
 \exp\left(-\frac{2\pi}{\abs{\beta}^2} 
 \left(\AAb{t\eta}(x_0 -\beta w)  + 2t^2\abs{\beta}^2 \left( 2\abs{n} - n \right) \right)^{\frac12} \BB_\eta^{\frac12}\right) \\
 \cdot h_{\nu''}\left( \frac{\abs{\beta}^2}{2\pi} 
 \left(\AAb{t\eta}(x_0 -\beta w)  + 2t^2\abs{\beta}^2 \left( 2\abs{n} - n \right) \right)^{-\frac12}
 \BB_\eta^{-\frac12} \right), 
\end{multline*}
with $\nu'' = \abs{\nu + r} -\frac12$ for $n\neq 0$. For $n=0$, we have 
\begin{multline*}
A^-_{0} (\eta; t,w) = \\
t^{2\nu-k+1}\left({\AAb{t\eta}(x_0 - \beta w)}\right)^{-\frac{\nu}{2} + \frac{k}{2} - \frac14}
{\BB_\eta}^{\frac{\nu}{2}- \frac{k}{2}-\frac14} 
\cdot
h_{\abs{\nu -k} -\frac12} \left( \frac{\abs{\beta}^2 }{ 2\pi  \left( \AAb{t\eta}(x_0 - \beta w) \BB_\eta\right)^{\frac12} }\right). 
\end{multline*}
\end{corollary}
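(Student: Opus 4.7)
The plan is to derive Corollary \ref{cor:FJ_oneq} by specializing Theorem \ref{thm:FJexp_pq} to signature $(1,q)$ and combining it with the explicit base-point evaluations already recorded in Example \ref{ex:sig_q1}.

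First, I would exploit two key simplifications available in signature $(1,q)$. Since $p=1$, only one pair of multi-indices $\underline\gamma=\underline\delta=(1,\dotsc,1)$ is admissible, giving $n_\gamma=n_\delta=q-1$ and forcing the auxiliary polynomials $P_{\tilde\gamma,\tilde\delta,\ell}(x_{0,+})$ to reduce to the constant $1$ at $\ell=0$; this collapses the outer sum over $\underline\gamma,\underline\delta$ in Theorem \ref{thm:FJexp_pq} to a single term and eliminates all polynomial factors. Moreover, by Corollary \ref{corol:psi1q_mixed}, $\widehat\psi_{1,q}$ carries the factor $(\beta'+\bar\tau\beta)^{q-1}(\bar\beta'+\bar\tau\bar\beta)^{q-1}$, which vanishes identically on the zero orbit $\eta=0$. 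Hence $\Phi_0(z,f,\psi_{1,q})\equiv 0$ and no unevaluated rank $0$ term remains, so the Fourier-Jacobi expansion provided by Theorem \ref{thm:FJexp_pq} is complete.

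Next, I would substitute the explicit evaluations at $z_0$ from Example \ref{ex:sig_q1} into the coefficient formulas \eqref{eq:fj_pq_rk1} and \eqref{eq:fj_pq_rk2}, and then move away from the base point by applying the intertwining operators of Lemma \ref{lemma:op_Pell}. Following the bookkeeping in Section \ref{subsec:op_on_Phipq}: the action of $a(t)\in A$ effects the scaling $\eta\mapsto t\eta$ and contributes an overall factor $t^{2}$, the action of $n(w,0)\in N$ induces the substitution $x_0\mapsto x_0-\beta w$ inside $\AAp{\eta}$, $\AAm{\eta}$, $\AAb{\eta}$ (Notation \ref{not:def_ABC}) and produces the multiplicative factor $e\bigl(2\alpha\Im\hlf{x_0}{w}-\CC_\eta(\hlf{x_0}{x_0}+n)\bigr)$ as computed in \eqref{eq:opw_factor}, while $n(0,r)$ contributes the Fourier-Jacobi phase $e^{2\pi i\kappa r}$ with $\kappa=a\alpha$. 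Because the relevant index $\nu=h+j+1-q$ is an integer, Remark \ref{rmk:if_nuhalf} applies throughout, converting the $K$-Bessel functions into Bessel polynomials $h_{\nu'}$ with $\nu'=\abs{\nu}-\tfrac12$ in the holomorphic case and $h_{\nu''}$ with $\nu''=\abs{\nu+r}-\tfrac12$ in the non-holomorphic case; the $n=0$ formulas follow from the standard index shift $\nu\mapsto\nu-k+1$ described at the end of Theorem \ref{thm:Phipq_atz0}, noting $k=1-q$ in this signature.

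The hard part is combinatorial rather than conceptual: tracking the several powers of $t$ and $\abs{\beta}$ consistently through the substitution, and verifying that the prefactor $t^{2(q-1)}$ together with the internal $t^{2\nu-1}$, $t^{2h-2j}$, and the $t^{4}$ implicit in $\BB_{t\eta}$ combine into the exponents stated in the corollary for both the holomorphic and non-holomorphic contributions. The phase $e(-\bar\lambda_2\beta')$ coming from the mixed-model Fourier coefficients $\hat{\mathbf{a}}^\pm(n)$ of Section \ref{subsec:Hmf_mixed} depends only on $\beta'$ and therefore passes through all intertwining operators unchanged, so it appears intact in the final expansion.
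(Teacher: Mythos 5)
Your proposal is correct and matches the paper's own (essentially one-line) proof: the corollary is obtained by specializing Theorem \ref{thm:FJexp_pq} to signature $(1,q)$, observing that $\widehat\psi_{1,q}((0,x_0),\tau)=0$ forces $\Phi_0\equiv 0$ so the expansion is complete, and inserting the base-point evaluations of Example \ref{ex:sig_q1}. The extra bookkeeping you describe — the intertwiners of Lemma \ref{lemma:op_Pell}, the phase \eqref{eq:opw_factor}, and the Bessel-polynomial reduction of Remark \ref{rmk:if_nuhalf} (noting that the half-integer $K$-Bessel index corresponds to the integer $\nu=h+j+1-q$ in the corollary's shifted convention) — is precisely what the paper leaves implicit.
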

\begin{proof}
	Follows immediately from Theorem \ref{thm:FJexp_pq} and Example \ref{ex:sig_q1}.  
\end{proof}

\section{Product expansions in signature \texorpdfstring{$(p,1)$}{(p,1)}}
\label{sec:p_one}
In this section, we take up the case of signature $(p,1)$ from Example \ref{ex:sig_p1}, Remark \ref{rmk:p1_groupop} and Corollary \ref{cor:FJ_exp_pone}, restricting to the lift of a weakly holomorphic modular form $f \in \MfLw{k}{L^-}$, with $k=1-p$. In particular $f \equiv f^+$. 
Let $f =\sum_{m\gg -\infty} \hat{\mathbf{a}}(n) q^n$ be the Fourier expansion of $f$ in the mixed model of the Weil representation, with $\hat{a}(n) = \sum_{\lambda \in L^\sharp / L} a(\lambda, n) e\left(- \beta'\cdot\lambda_2 \right)$ see \eqref{eq:ahatn}.

\subsection{The Fourier-Jacobi expansion}

First, we reexamine the Fourier-Jacobi expansion for $(2i)^{-1}\Phi(z,f,\psi_{p,1})$ from Corollary \ref{cor:FJ_exp_pone} with $f\in\MfLw{k}{L^-}$. 

\paragraph{Singular terms.}
First, we gather the contributions to the constant term $c_0'(t,w)$, i.e. for singular $\eta$. By the results of \cite{K16}, the contribution of the 0-orbit is given by $4\pi I_0$.  

The terms with $\operatorname{rank}\eta = 1$ are given as follows. The operation of $n(w,0)$  consists simply
of a factor $e\left( -2\Re\left( \beta'\hlf{x_0}{w}\right) \right)$, while $n(r,0)$ operates trivially.
One has
\begin{equation}\label{eq:p1_rk1_fj0} 
\begin{lgathered}
 2\sum_{\eta = (0,\beta')}
\sum_n \hat{a}(n) \left(\mathcal{F}(\widehat{g})\circ \phi_1(n,\eta)^+\right) 
\\ 
=  2t^2 \sum_{\beta' = (a,b)^t}\sum_n \sum_{\substack{\lambda \\ \lambda_1 = 0}}
a(\lambda, n) \frac{1}{2\pi t^2\abs{\beta'}^2}e\bigl( -\beta'\cdot\lambda_2  -2 \Re\left(\beta'\hlf{x_0}{w}\right) \bigr)  \\
=  \sum_{a,b}\sum_n \sum_{\substack{\lambda \\ \lambda_1 = 0}}
a(\lambda, n)\frac{1}{\pi (a^2+ b^2)} e\bigl(-a \left( 2\Re\hlf{x_0}{w} + \lambda_{21}\right)  + b\left( 2\Im\hlf{x_0}{w} + \lambda_{22}  \right)\bigr). 
\end{lgathered}
\end{equation}
\paragraph{Non-singular terms.} 
Now, for the terms with $\operatorname{rank}\eta=2$.
Let $g = n(w,r) \circ a(t) \in G$. 
From Corollary \ref{cor:FJ_exp_pone} and Remark \ref{rmk:p1_groupop}, we have
\begin{multline*}
 \hat{a}(n) \cdot  \bigl( \mathcal{F}\bigl(\widehat{g}\bigr)  \circ\phi_2( n, \eta)^+ \bigr) = \\ \sum_\lambda a(\lambda,n)
\frac{t\sqrt{2}\abs{\beta} }{ \BB_\eta^\frac12 } 
\exp\Bigl( 
	- 2\pi\frac{\abs{\Im(\beta'\bar\beta)}}{\abs{\beta}^2}\epsilon'\left( \AAp{t\eta}(x_0) + a^2\hlf{w}{w}  - 2a\blf{x_0}{w}  \right)   \\
	 + 2\pi i \Bigl( - \frac{1} {\abs{\beta}^2}  \Re(\beta'\bar\beta)\left(n  + \hlf{x_0}{x_0}\right) + 2 \alpha\Im\hlf{x_0}{w} \Bigr)\Bigr)  
	 e^{2\pi i \left( - \beta'\lambda +  2ra\alpha \right)}
	\end{multline*}
with a sign $\epsilon' \vcentcolon = \operatorname{sign}\left(\alpha\AAp{t\eta}\left(x_0 - aw\right)\right)$.
Now, $\beta'\cdot\lambda = \lambda_{22}\alpha + \lambda_{21} b$, $\left(\frac12 \BB_\eta\right)^{1/2} = \abs{\Im(\beta'\bar\beta)} = \abs{\alpha a}$, $\beta^2 = a^2$ and $\AAp{t\eta}(x_0) = n  + 2a^2 t^2 + \hlf{x_0}{x_0}$, we get 
\[
\begin{multlined}
\sum_\lambda a(\lambda, n)\frac{t}{\abs{\alpha}} 
\exp\left( - 2\pi \epsilon' \alpha\left( \frac{1}{a} \left( n  + \hlf{x_0}{x_0} \right)   + 2a t^2 + a\hlf{w}{w}  - 2\blf{x_0}{w} \right)  \right) \\ \cdot e\left(  - \lambda_{22} \alpha  - \lambda_{21} b  -  \frac{b}{a} \left(n  + \hlf{x_0}{x_0}\right) + 2\Im\hlf{x_0}{w} + 2ra\alpha  \right).
\end{multlined}
\]
Further, there are a number of non-vanishing conditions from the Fourier expansion and the transformation behavior of $f \in\MfLw{k}{L^-}$, see \cite{K16}. Namely, $\lambda_{12} =0$,  $x_0 \equiv \lambda_0 \mod{L}$ and $0< a \in \lambda_{11} + N\Z$, with some positive integer $N$. 
 Moreover, by the transformation behavior of $f$, for all non-vanishing terms, we have $n + \hlf{x_0}{x_0} + \lambda_{11} a \in \Z$. Through this last condition, since $b \in \Q \mod a$, we get 
\begin{equation} \label{eq:pp_term_alpha}
\begin{multlined}
\sum_\lambda a(\lambda, n)\frac{t}{\abs{\alpha}} 
\exp\Bigl( - 2\pi \epsilon' \alpha\left( \frac{1}{a} \left( n  + \hlf{x_0}{x_0} \right)   + 2a t^2 + a\hlf{w}{w}  - 2\blf{x_0}{w} \right) \\  + 2\pi i \alpha\left( -\lambda_{22} + 2\Im\hlf{x_0}{w}  + 2ra \right) \Bigr).
\end{multlined}
\end{equation}
This is the contribution of $\phi_2(n,\eta)$ to the Fourier-Jacobi expansion of the lift $\Phi(z,f,\psi_{p,1})$ with $f \in \MfLw{k}{L^-}$. It corresponds to the contribution of $\phi_2(n,\eta)^{+}$ for $f\in\HmfLp{k}{L^-}$ in Corollary \ref{cor:FJ_exp_pone}. After summing over all terms with $\alpha>0$ and $\AAp{\eta}>0$ one gets the expression for $c_{a\alpha}'$ in the following Proposition, while the expression for $c_0'$ is obtained from \eqref{eq:p1_rk1_fj0}.

\begin{proposition}\label{prop:FJ_pone_fshreek}
	The Fourier-Jacobi expansion of  $(2i)^{-1}\Phi(z,f,\psi_{p,1})$ for $f \in \MfLw{k}{L^-}$ is  given by
	\[
		c_0'(w) + 
		\sum_{\substack{a, \alpha \\ \alpha >0}} c_{a\alpha}'(w) 
		e\left(2a\alpha \left(r + it^2\right) \right).
	\]
	The coefficients take the form 
	\[
	\begin{aligned}
	          c_0'(t,w) &   = 4\pi I_0 \, +   \\  &   \sum_{a,b} \sum_{n} \sum_{\substack{\lambda \\ \lambda_1 = 0}} 
          a(\lambda, n) \frac{1}{\pi(a^2 + b^2)} 
	e\left( - a\left(2\Re\hlf{x_0}{w}  + \lambda_{21} \right)
	+ b\left( 2\Im \hlf{x_0}{w} + \lambda_{22}\right)  \right), \\
		c_{a\alpha}'(t,w) & = 
	 t \Biggl[ \sum_n   \sum_{\substack{\lambda \\ \lambda_{12} = 0 \\ \lambda_{11} \equiv a \bmod{N}}}  
	 \sum_{\substack{x_0 \in \lambda_0 + L \\ a \mid n + \hlf{x_0}{x_0} + \lambda_{21}a}}  \frac{a(\lambda, n)}{ \abs{\alpha}} \\ 
	 &  \qquad\cdot
	e\Bigl(
	- \lambda_{22}\alpha  +   i\alpha \left(a \hlf{w}{w} - 2\hlf{x_0}{w} \right )   + i \frac{\alpha}{a} \left( n  + \hlf{x_0}{x_0}\right) 
	\Bigr)  \Biggr]. 
	\end{aligned}
	\]
\end{proposition}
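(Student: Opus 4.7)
The plan is to assemble the Fourier-Jacobi expansion by combining the three orbit contributions $\Phi_0$, $2\Phi_1$, $2\Phi_2$ from the decomposition \eqref{eq:def_sum_phi}, exploiting that for $f \in \MfLw{k}{L^-}$ the non-holomorphic coefficients vanish, so only the $a^+$ contributions of Corollary \ref{cor:FJ_exp_pone} survive and can be relabelled $a(\lambda,n)$. Since the rank~$0$ and rank~$1$ terms are invariant under the centre of the Heisenberg group $N$ (see Subsec.~\ref{subsec:op_on_Phipq}), while the rank~$2$ terms transform under $n(0,r)$ by a non-trivial character, the rank~$0$ and rank~$1$ pieces together furnish the constant coefficient $c_0'(t,w)$, and the rank~$2$ terms produce the oscillating coefficients $c_{a\alpha}'(t,w)$ indexed by $\kappa = a\alpha \neq 0$.

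For $c_0'(t,w)$ the rank~$0$ piece contributes the additive constant $4\pi I_0$, obtained by adapting the argument of \cite[Sec.~9]{Bo98} as described in \cite{K16}, since in signature $(p,1)$ the theta series attached to $L\cap W$ is definite. The rank~$1$ piece is read off from \eqref{eq:p1_rk1_fj0}: by \eqref{eq:s_dep_trick} one has $\phi_1(n,\eta)^+(z_0) = (2\pi(a^2+b^2))^{-1}$, independent of $n$, while $n(w,0)$ contributes only the phase factor $e(-2\Re(\beta'\hlf{x_0}{w}))$ from Lemma \ref{lemma:op_Pell}. Expanding $\hat a(n) = \sum_\lambda a(\lambda, n) e(-\beta'\cdot\lambda_2)$ and separating real and imaginary parts of $\beta' = a + ib$ in the combined exponential yields the stated form of $c_0'(t,w)$ once the non-vanishing condition $\lambda_1 = 0$, forced by compatibility of the Fourier expansion of $f$ with the mixed-model pairing in the rank~$1$ orbit, is imposed.

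For $c_{a\alpha}'(t,w)$ the starting point is the explicit formula for $\phi_2(n,\eta)^+(z_0)$ from Example \ref{ex:sig_p1} combined with the intertwining of $n(w,0)\, n(0,r)\, a(t)$ collected in Subsec.~\ref{subsec:op_on_Phipq} and Remark \ref{rmk:p1_groupop}. Using $(\tfrac{1}{2}\BB_\eta)^{1/2} = \abs{a\alpha}$, $\CC_\eta = b/a$, and the identity $\AAb{\eta} = \tfrac{1}{2}\AAp{\eta}^2$ valid in this signature (which collapses the half-integer Bessel function to a pure exponential), I arrive at the expression \eqref{eq:pp_term_alpha}. Factoring out the prefactor $e(2a\alpha(r+it^2))$, which absorbs the $r$-dependence from $n(0,r)$ together with the $-4\pi a\alpha t^2$ piece produced by the $2at^2$ summand in the real exponential, leaves precisely the claimed expression for $c_{a\alpha}'(t,w)$. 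The restriction to $\alpha > 0$ corresponds to the choice $\epsilon' = +1$ from Remark \ref{rmk:p1_groupop}; the $\alpha < 0$ contributions deliver the Fourier-Jacobi coefficients with negative index $\kappa$ and are indexed separately.

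The main obstacle I anticipate is keeping the sign $\epsilon' = \mathrm{sign}(\alpha\AAp{t\eta}(x_0-aw))$ aligned with the non-vanishing conditions $\lambda_{12}=0$, $x_0 \equiv \lambda_0 \bmod L$, $\lambda_{11} \equiv a \bmod N$, and $a \mid n + \hlf{x_0}{x_0} + \lambda_{11} a$ imposed by the transformation behavior of $f$ under the Weil representation, in order to justify that the sum over $\alpha \in \Q^\times$ reduces cleanly to a sum over $\alpha > 0$ after the prefactor $e(2a\alpha(r+it^2))$ has been extracted. A secondary bookkeeping point is to verify that the compact expression inside $e(\cdot)$ in the displayed formula correctly repackages both the Gaussian decay in $w$ from \eqref{eq:opw_factor} and the oscillatory $\Im\hlf{x_0}{w}$ phase, with the leftover power of $t$ outside arising from the $t^2$ of Lemma \ref{lemma:op_Pell} against the $\BB_\eta^{-1/2}$ in $\phi_2$.
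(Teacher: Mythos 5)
Your proposal is correct and follows essentially the same route as the paper: assembling the orbit decomposition from Corollary \ref{cor:FJ_exp_pone} and Example \ref{ex:sig_p1}, applying the intertwiners of Lemma \ref{lemma:op_Pell} and Remark \ref{rmk:p1_groupop}, imposing the non-vanishing conditions from the transformation behavior of $f$, and extracting the prefactor $e\left(2a\alpha(r+it^2)\right)$ from \eqref{eq:pp_term_alpha}. The one caveat you already flag --- that $\epsilon'=+1$ requires $\AAp{t\eta}(x_0-aw)>0$ in addition to $\alpha>0$ --- is exactly the positivity condition the paper imposes when summing to obtain $c_{a\alpha}'$, so your treatment matches.
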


\paragraph{A \texorpdfstring{$\log\abs{\cdot}^2$}{log-squared} expression.}

Now, we recover $\Phi_2(z, f,\psi_{p,1})$ by summing \eqref{eq:pp_term_alpha} over $\eta$. First, we sum over all positive $\alpha$'s. 
Let's assume that $\AAp{t\eta}(x_0 - aw)>0$, hence in this case $\epsilon'>0$.
We get (omitting the factor ${t}$ and the sum over $\sum_\lambda$ for the time being)
\begin{equation*}
- a(\lambda,n)\cdot\log\left(  1 - e\Bigl( -\lambda_{22}   + 2ra +  i\left( a^{-1} \left( n  + \hlf{x_0}{x_0} \right)   + 2a t^2 + a\hlf{w}{w}  - 2\hlf{x_0}{w} \right)   \Bigr) \right).
\end{equation*}
Similarly, we sum over $\alpha <0$, again assuming $\AAp{t\eta}(x_0 - aw)>0$, whence $\epsilon'<0$, and get exactly the complex conjugate of the exponential i.e.
\[
-a(\lambda,n)\cdot\log\left(  1 - e\Bigl( +\lambda_{22}   - 2ra -  i\left( a^{-1} \left( n  + \hlf{x_0}{x_0} \right)   + 2a t^2 + a\hlf{w}{w}  - 2\hlf{w}{x_0} \right)   \Bigr) \right).
\]
Note that for a fixed value of $t$, the positivity condition $\AAp{t\eta}(x_0 - aw)>0$ holds for all but (at most) finitely many terms. Denote by $a_0$ the smallest occurring $a$ and set $n_0 \vcentcolon= \max\{ \abs{n}; \, n<0, a(n,\lambda)\neq 0 \}$. Then, the positivity condition is certainly satisfied if $t^2 > \frac{n_0}{a_0^2}$.
Conversely, for all $t$ for which the condition holds, we get the total contribution of the non-singular terms (recall the factor $2$ to the contribution of $\Phi_2$ to $\Phi$)
\begin{equation}\label{eq:log_prod_mp} 
\begin{aligned} 
\tfrac{2}{2i}\Phi_2 & (z,f,\psi_{p,1})   = -t \sum_{\substack{n \\ \lambda \in L^\sharp/L }} \sum_{\substack{ a \in \lambda_{11} + N\Z }} \sum_{\substack{x_0 =  \lambda_0 + D \\ a \mid n + \hlf{x_0}{x_0} + a\lambda_{21}}}  a(\lambda,n) \\
&\; \cdot\log\left\lvert  1 - e\left( 2a (r + it^2) +\lambda_{22} +  i\left( a^{-1 } \left( n  + \hlf{x_0}{x_0} \right)    + a\hlf{w}{w}  - 2\hlf{x_0}{w} \right)   \right) \right\rvert^2.
\end{aligned} 
\end{equation}
 From this, we can recover an infinite product expansion, see Lemma \ref{lemma:Bop_nonsingular} below.

\begin{remark}\label{rmk:pos_condition}
	The positivity condition, explicitly given by
	\[
	\begin{aligned}
	0 < \AAp{t\eta}(x_0 - aw) &  = n + 2a^2 t^2 + \hlf{x_0 -a w}{x_0 - a w} \\
	& = n + 2a^2\left( t^2 + \tfrac12\hlf{w}{w} \right) + \hlf{x_0}{x_0} -2a\Re\hlf{x_0}{w}
	 \end{aligned}
	\]
	is also a sufficient condition for the non-vanishing of the real part of the argument of the exponent, and thus for the terms in \eqref{eq:log_prod_mp} to be non-singular.
\end{remark}

\begin{remark}
	We also remark that formally, summing over the at most finitely many terms with 
	$\AAp{t\eta}(x_0 - w)<0$ gives  terms with $\log\abs{\cdot}^2$ of the complex conjugate of the variables in \eqref{eq:log_prod_mp}. In Siegel domain coordinates, see \ref{ex:siegel_domain}, this would correspond to terms in $\bar{\tau}_\ell$, i.e.\ in a  generalized \lq lower half-plane\rq\  conjugate to the Siegel domain model.  
\end{remark}

\subsection{Infinite product expansions} \label{subsec:bo_products}
\paragraph{Siegel domain coordinates}\label{par:Siegeldom} 

Recall the construction of the Siegel domain model for $\Dom$ in the present signature $(p,1)$, see Example \ref{ex:siegel_domain} or see \cite{Hof14, Hof17} for details. 

We can recover the Siegel domain coordinates $\tau = \tau_\ell$ and $\sigma$ from the coordinates $r,w,t$ and $\mu$ arising from the $NAM$-decomposition of $G$ introduced in Section \ref{subsec:itw_sl}, as follows:
For $z \in \Dom$, denote by $\myz = \ell' + i\tau_\ell \ell + \sigma$ the usual representative attached to $(\tau_\ell, \sigma) \in \Hll$. The base point $z_0$ is then given by $\myz_0 = \ell' - \ell$, corresponding to $(i,0)\in\Hll$.

The operation of the subgroups $N$, $A$ and $M$  on $\Hll$ is given as follows: 
\begin{equation}\label{eq:Gop_onH}
 \begin{aligned}
N \ni (w,0):\; & (\tau_\ell, \sigma) \longmapsto (\tau_\ell  + i\hlf{w}{\sigma} +   i\tfrac{1}{2} \hlf{w}{w}, \sigma + w), \\
N \ni n(0,r):\; & (\tau_\ell, \sigma) \longmapsto (\tau_\ell + r, \sigma), \\
A \ni a(t):\; & (\tau_\ell, \sigma) \longmapsto (t^2 \tau_\ell , t\sigma), \\
M \ni \mu:\; & (\tau_\ell, \sigma) \longmapsto (\tau_\ell, \mu \sigma).
\end{aligned}
\end{equation}
Note that in this signature $M$ is compact. Also, 
 the $\ell'$-component of the vector $g(\mathfrak{z})$ is unchanged for all $g \in N, M$ and is multiplied by $t$ for $g = a(t) \in A$. Hence, there is a non-trivial automorphy factor $j(g,(\tau_\ell, \sigma))$ only if
 $g = a(t) \in A$, with $t\neq 1$. In which case,
 $j(a(t), \myz) = t^{-1}$. 

 We set $g(\tau_\ell,\sigma) \vcentcolon= g_{\mathfrak{z}} \vcentcolon= n(w,r) \circ a(t)$ with
\begin{equation}\label{eq:group_siegel_coords}
  w = \sigma, \quad r = \Re\tau_\ell \quad\text{and}
  \quad t^2 = \Im\tau_\ell - \frac12 \hlf{\sigma}{\sigma}.
\end{equation}
Then, $[g(\tau_\ell,\sigma) \myz_0] = [\myz]$, and we note that
 \[
   j\left(a(t), \myz\right) =  \left( \Im\tau_\ell   - \tfrac12\hlf{\sigma}{\sigma} \right)^{-\frac12}  = \abs{\tfrac12\hlf{\myz}{\myz}}^{-\frac12}.
 \]
\begin{remark}\label{rmk:groupaction_coords}
  One can consider  $\tau_\ell' \vcentcolon = \tau_\ell  - \tfrac{1}{2i}\hlf{\sigma}{\sigma}$
  instead of $\tau_\ell$,  and may view $(\tau_\ell', \sigma)$ (and the attached one-dimensional subspace $\C \mathfrak{z}(\tau_\ell', \sigma)$)  as the  \lq group-action\rq\ coordinate associated to $(\tau_\ell, \sigma)$ (and $\myz$, respectively).
\end{remark}

 Letting $g(\tau_\ell , \sigma)$ operate on $\Phi(z_0, f, \psi_{p,1})$ as usual, through intertwining, 
 and taking into account the automorphy factor, we recover $\Phi((\tau_\ell,\sigma), f, \psi_{p,1})$. 
 Hence, the following Corollary of Proposition \ref{prop:FJ_pone_fshreek}, which gives the Fourier-Jacobi expansion in Siegel domain coordinates. 
 
\begin{corollary}\label{cor:fj_siegel_dom}
In signature $(p,1)$ using the Siegel domain coordinates $\tau_\ell$ and $\sigma$, the lift of $f\in\MfLw{k}{L^-}$ has a Fourier-Jacobi expansion of the form 
 \[
\tfrac{1}{2i}\Phi((\tau_\ell, \sigma), f, \psi_{p,1} )
= 
 c_0'(\sigma) + 
\sum_{\kappa} c_{\kappa}'(\sigma) \cdot e\left(\kappa \tau_\ell\right), 
\]
wherein $\kappa = 2a\alpha$. The coefficient of the constant term is given by 
\[
c_0'(\sigma) = 
4\pi I_0 +
 \sum_{\beta' = (a,b)^t }\sum_n\sum_{\substack{\lambda \\ \lambda_1 = 0}}
\frac{ 2 a(\lambda, n)}{\pi \abs{\beta'}^2 \abs{\hlf{\myz}{\myz}}}
e\left( - \Re\bigl( \bar\beta' \left(\hlf{x_0}{w} + \lambda_2\right)\bigr)\right),
\]
while the coefficients $c_\kappa'(\sigma)$ for $\kappa \neq 0$ take the form
\[
\begin{aligned} 
c_\kappa'(\sigma)  =   \sum_{a} & \, \frac{2a}{\kappa} \biggl[ \sum_{n}
\sum_{\substack{\lambda \\ \lambda_{12} = 0 \\ \lambda_{11} \equiv a \bmod{N}  }}
\sum_{\substack{x_0 \in \lambda_0 + D\\ a \mid n + \hlf{x_0}{x_0} + \lambda_{21}a }}
\\ 
& \qquad 
a(\lambda, n) \cdot e\left( - \frac{\kappa}{2a} \left(  \lambda_{22}
-2 i\hlf{x_0}{w} \right) +
i\frac{\kappa}{2 a^2}\left( n + \hlf{x_0}{x_0}\right)  \right)\biggr].
\end{aligned}
\]
\end{corollary}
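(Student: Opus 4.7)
The proof is a direct translation of Proposition \ref{prop:FJ_pone_fshreek}, which is expressed in the $NAM$-coordinates $(t,w,r)$, into the Siegel domain coordinates $(\tau_\ell,\sigma)$ via the identifications in \eqref{eq:group_siegel_coords}, combined with the automorphy factor $j(a(t),\myz) = (\Im\tau_\ell - \tfrac12\hlf{\sigma}{\sigma})^{-1/2} = t^{-1}$ recorded just above the corollary.

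For the non-constant coefficients my plan is the following. I would substitute $r = \Re\tau_\ell$ and $t^2 = \Im\tau_\ell - \tfrac12\hlf{\sigma}{\sigma}$ into the exponent and rewrite
\[
e\bigl(2a\alpha(r + i t^2)\bigr) = e(\kappa\tau_\ell)\cdot e\!\left(-i\kappa\cdot\tfrac12\hlf{\sigma}{\sigma}\right),
\]
with $\kappa = 2a\alpha$. The key simplification is that the factor $e(i\alpha\cdot a\hlf{w}{w})$ appearing inside $c'_{a\alpha}(t,w)$ becomes $e(\tfrac{i\kappa}{2}\hlf{\sigma}{\sigma})$ after $w = \sigma$, so it cancels the $e\bigl(-i\kappa\cdot\tfrac12\hlf{\sigma}{\sigma}\bigr)$ produced above. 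This extracts a clean factor $e(\kappa\tau_\ell)$ and leaves behind an exponential depending only on $\sigma$ and the summation indices. Substituting $\alpha = \kappa/(2a)$ in the surviving exponent, and absorbing the lonely $t$-factor in front of $c'_{a\alpha}(t,w)$ via the automorphy factor, one reads off the claimed formula for $c'_\kappa(\sigma)$.

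For the constant term, the rank zero contribution $4\pi I_0$ is coordinate-independent and carries over unchanged. For the rank one part I would simply substitute $w = \sigma$ in the expression from Proposition \ref{prop:FJ_pone_fshreek}; the automorphy factor then produces an additional multiplicative factor $t^{-2} = |\tfrac12\hlf{\myz}{\myz}|^{-1} = 2/|\hlf{\myz}{\myz}|$, which combines with $1/(\pi(a^2+b^2))$ to yield $2/(\pi|\beta'|^2\,|\hlf{\myz}{\myz}|)$, matching the stated denominator, and the remaining exponential is precisely what appears after rewriting the Prop.~\ref{prop:FJ_pone_fshreek} exponent as $-\Re\bigl(\bar\beta'(\hlf{x_0}{\sigma} + \lambda_2)\bigr)$.

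The main obstacle will be the coherent bookkeeping of the automorphy factor across the constant and non-constant pieces: one must verify that the interplay with the $e(it^2\kappa)$-term removes all $t$-dependence from $c'_\kappa(\sigma)$ for $\kappa \neq 0$, while for the constant term the same factor is precisely what introduces the denominator $|\hlf{\myz}{\myz}|$. Once this compatibility is pinned down, the rest reduces to organizing indices and applying $\alpha = \kappa/(2a)$, so that the corollary follows from Proposition \ref{prop:FJ_pone_fshreek} by inspection.
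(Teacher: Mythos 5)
Your proposal follows the paper's route exactly: Corollary \ref{cor:fj_siegel_dom} is obtained from Proposition \ref{prop:FJ_pone_fshreek} purely by the substitution \eqref{eq:group_siegel_coords} together with the automorphy factor $j(a(t),\myz)=t^{-1}$, and you correctly isolate the one genuinely computational point, namely that the factor $e(i\alpha a\hlf{w}{w})$ inside $c'_{a\alpha}(t,w)$ cancels the $e\bigl(-i\kappa\tfrac12\hlf{\sigma}{\sigma}\bigr)$ released when $e\bigl(2a\alpha(r+it^2)\bigr)$ is rewritten as $e(\kappa\tau_\ell)$, after which $\alpha=\kappa/(2a)$ gives the stated $c'_\kappa(\sigma)$. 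The one point to watch is the power of the automorphy factor: removing the prefactor $t$ from $c'_{a\alpha}$ uses $t^{-1}$, whereas producing the denominator $\abs{\hlf{\myz}{\myz}}=2t^2$ in the rank-one part of $c'_0$ (whose expression in Proposition \ref{prop:FJ_pone_fshreek} carries no $t$ at all) requires $t^{-2}$, so the two pieces cannot both come from one and the same normalizing factor; this mismatch is already present in the paper's stated formulas, so your bookkeeping is no less precise than the source, but it deserves an explicit comment rather than being attributed in both cases to a single automorphy factor.
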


\paragraph[Borcherds forms]{Borcherds forms and infinite product expansions.}
For the following definition see Borcherds \citep[][Theorem 13.3 3.]{Bo98} and recall that $\psikm_{p,1} = 2i \varphi_0$.  

\begin{definition}\label{def:psi_bo}
We define the Borcherds form $\Psi(\myz, f)$ through the relation 
\begin{equation} \label{eq:def_psi_bo} 
\frac{1}{2i} \Phi( \myz, f, \psi_{p,1}) = \Phi(\myz, f, \varphi_0) = -  2\log\abs{\Psi(\myz,  f)}^2
- a(0,0)\left( \log\abs{\hlf{\myz}{\myz}} + \log(2\pi) - \gamma \right),
\end{equation}
where $\gamma = - \Gamma'(1) = 0.57721\dots$ is the Euler-Mascheroni constant.
\end{definition}
\begin{remark}\label{rmk:psi_automorphic}
  Since Definition \ref{def:psi_bo} is compatible with that of Borcherds, 
  the Borcherds form is automorphic on $\Dom$. This follows from the results in \cite{Hof11, Hof14} through the pullback under the embedding constructed there.
\end{remark}

Now from \eqref{eq:log_prod_mp} 
we find the contribution of the non-singular terms to $\Psi(\myz, f)$:

\begin{lemma}\label{lemma:Bop_nonsingular}
Let $f \in \MfLw{k}{L^-}$ be a weakly holomorphic modular form with Fourier expansion $f = \sum_{\lambda \in L^\dual/L} \sum_{n\gg -\infty} a(\lambda,n) q^n \ebase_\lambda$. Then, the contribution of the non-singular terms of $(2i)^{-1}\Phi(\myz, f, \psi)$ to $\Psi(\myz, f)$ has the following infinite product expansion 
\[
\begin{gathered}
\Psi_2(\myz, f) = \prod_{\lambda \in L^\sharp/L} \prod_{n \in \Z - \Qf{\lambda}} \prod_{\substack{x_0 =  \lambda_0 + D \\  a \mid n + \hlf{x_0}{x_0} + a\lambda_{21}}}
		\Psi_{n,\lambda}(\tau, \sigma)^{\frac12 a(\lambda, n)}  \quad\text{with}\\
\Psi_{n,\lambda}(\tau,\sigma) = 
\Bigl(
   1 - 
   e\Bigl(-\lambda_{22} +2a\tau_\ell +  i\Bigl(   
   - 2\hlf{x_0}{\sigma}   +  a^{-1} \left( n  + \hlf{x_0}{x_0} \right)\Bigl)    \Bigr).
\end{gathered} 
\]
The product is zero-free if 
\[
\hlf{\myz}{\myz} = 2\Im\tau_\ell - \hlf{\sigma}{\sigma} >  \frac{n_0}{a_0^2},
\]
with $n_0 = \max\{ \abs{n};\, n<0, a(\lambda, n)\neq 0\}$ and $a_0$ the smallest positive $a$ occurring in the system of representatives for full-rank matrices in $\mathrm{M}_2(\Q)$ from Lemma \ref{lemma:Ks_reprs}. The product is absolutely convergent for all $\myz$ of sufficiently large norm.
\end{lemma}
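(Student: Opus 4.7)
The plan is to derive the product expansion by exponentiating the $\log|\cdot|^2$-formula \eqref{eq:log_prod_mp} obtained at the end of the previous subsection, and then reading off the non-singular factor $\Psi_2(\myz,f)$ from the defining equation \eqref{eq:def_psi_bo}.

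First, I would translate \eqref{eq:log_prod_mp} into Siegel domain coordinates using the substitution \eqref{eq:group_siegel_coords}: $w=\sigma$, $r=\Re\tau_\ell$ and $t^2=\Im\tau_\ell-\tfrac12\hlf{\sigma}{\sigma}$. The key observation is that $2a(r+it^2)+ia\hlf{w}{w}=2a\tau_\ell$, so the $\hlf{w}{w}$-contribution in the exponent is absorbed by the imaginary part of $\tau_\ell$. After this substitution, the exponent inside $e(\cdot)$ in \eqref{eq:log_prod_mp} becomes exactly the one appearing in the statement of the lemma, so that $\tfrac{1}{i}\Phi_2(\myz,f,\psi_{p,1})$ takes the form $-2t\sum a(\lambda,n)\log|\Psi_{n,\lambda}(\tau_\ell,\sigma)|$ (with the factor $t$ being $|\tfrac12\hlf{\myz}{\myz}|^{1/2}$, i.e.~the reciprocal of the automorphy factor from \S\ref{par:Siegeldom}, which is already correctly tracked by the intertwining derivation of \eqref{eq:log_prod_mp}).

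Next, I would combine this with \eqref{eq:def_psi_bo}. Writing $\Phi=\Phi_0+2\Phi_1+2\Phi_2$, the rank $0$ and rank $1$ terms together with the explicit $-a(0,0)(\log|\hlf{\myz}{\myz}|+\log(2\pi)-\gamma)$ correction are what defines $\Psi_1(\myz,f)$ (cf.~the treatment in \cite{K16}), so what remains for $\Psi_2$ is precisely $\tfrac{1}{2i}\cdot 2\Phi_2(\myz,f,\psi_{p,1})=-2\log|\Psi_2(\myz,f)|^2$. Using $-\log|1-x|^2=2\Re\log(1-x)^{-1}$ and exponentiating the resulting real part yields the asserted infinite product, where each $a(\lambda,n)$ acquires the claimed multiplicity $\tfrac12 a(\lambda,n)$ from the algebra of the factors $2$ and $-2$.

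For the convergence and zero-freeness assertion, I would appeal directly to Remark \ref{rmk:pos_condition}: under \eqref{eq:group_siegel_coords} the positivity of $\AAp{t\eta}(x_0-aw)$ becomes $\hlf{\myz}{\myz}>n_0/a_0^2$ after clearing denominators, which is exactly the hypothesis in the lemma, and this ensures the real part of the exponent is strictly negative, hence each factor $1-e(\cdot)$ is nonzero. Absolute convergence for $\hlf{\myz}{\myz}$ sufficiently large follows from the exponential decay of $e(2a\tau_\ell)$ as $\Im\tau_\ell\to\infty$ together with Borcherds' polynomial bound on the principal part coefficients $a(\lambda,n)$.

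The main technical issue I expect is bookkeeping: carefully matching the factors of $2$ (from $2\Phi_2$, from $-2\log|\cdot|^2$, and from the unfolding over $\pm\gamma$), checking that the sign conventions in $\lambda_{22}$ are consistent with a reindexing $\lambda\mapsto-\lambda$ in the summation, and verifying that the rank $\leq 1$ contributions together with the $-a(0,0)\log|\hlf{\myz}{\myz}|$ correction really do account for everything outside $\Psi_2$ as defined by \eqref{eq:def_psi_bo}. Beyond this accounting the argument is formal.
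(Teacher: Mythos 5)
Your proposal follows essentially the same route as the paper's own proof: exponentiating \eqref{eq:log_prod_mp} in the Siegel domain coordinates \eqref{eq:group_siegel_coords} (the identity $2a(r+it^2)+ia\hlf{w}{w}=2a\tau_\ell$ is exactly the substitution used), reading off $\Psi_2$ from Definition \ref{def:psi_bo} with the automorphy factor $j(a(t),\myz)=t^{-1}$ absorbing the prefactor $t$, and invoking Remark \ref{rmk:pos_condition} for the zero-free region. The only small inaccuracy is the appeal to a ``polynomial bound'' on the coefficients --- the relevant growth is subexponential, $a(\lambda,n)=O(e^{c_f\sqrt{n}})$, which is what the paper uses in the remark following the lemma; this still loses to the decay of $e(2a\tau_\ell)$, so the conclusion stands.
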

\begin{proof}
  Recall that the total contribution of the rank 2 terms to $\Phi(\myz, f, \psi_{p,1})$ is given by $2\Phi_2(\myz,f,\psi_{p,1})$. The product expansion now follows directly from \eqref{eq:log_prod_mp} and Definition \ref{def:psi_bo} by using the Siegel domain  coordinates from \eqref{eq:group_siegel_coords} and the automorphy factor $j(a(t), \myz) = t^{-1}$.  
  
  By Remark \ref{rmk:pos_condition}
the product is non-zero if 
\[
  \begin{gathered}
0< \AAp{t\eta}(x_0 - a\sigma)
= \left( n + 2a^2 t^2 + \hlf{x_0 - a\sigma}{ x_0 - a\sigma} \right) \\
= \left( n + 2a^2\Im\tau_\ell + \hlf{x_0}{x_0}  - 2a\Re\hlf{x_0}{\sigma}  \right).
\end{gathered}
\] 
Since $x_0 -\sigma$ is positive definite, this condition is clearly satisfied if $t^2 > \frac12 \abs{n}a^{-2}$ for all occurring $n$ and $a$. 

Hence, this condition describes the zero-free region as a neighborhood of the cusp $[\ell]$, in  which the region of absolute convergence is contained, of course. In turn, this region contains a suitable neighborhood of the cusp, which then can be described by an inequality of the form $\hlf{\mathfrak{z}}{\mathfrak{z}} > \epsilon^{-1}$  $(\epsilon >0)$, see e.g.~\cite{Hof14, Hof17} for details.
\end{proof}
\begin{remark}
  A more explicit description of the domain of absolute convergence can be obtained using the  asymptotic growth formula for the Fourier coefficients of $f$ \citep[see][Sec.~4.2]{K16}, i.e. $\abs{a(\lambda, n)} = O(e^{2\pi c_f\sqrt{m}})$ with a positive constant $c_f$ depending on $f$. Arguing as in loc.~cit.~one finds $\hlf{\myz}{\myz} > n_0 + c_f^2$.   
\end{remark}
Now, for the contribution of the rank 1 terms. 
\begin{lemma}\label{lemma:Bprod_rank01}
  The contribution of the rank 1 and rank 0 terms to the infinite product expansion attached to the lift of a weakly holomorphic modular form $f \in \MfLw{k}{L^-}$ with Fourier expansion $f = \sum_{n,\lambda} a(\lambda, n) q^n \ebase_\lambda$ is given by
  \[
    \begin{gathered}
    e^{2\pi I_0} \eta(i)^{a(0,0)} 
    \prod_{n}\prod_{\substack{\lambda \\ \lambda_1 = 0 }} 
    \left(
    \prod_{\substack{x_0 \in D + \lambda_0 \\ \Qf{x_0} = n}}
    \frac{\vartheta_1 (-i(2 \hlf{x_0}{ \sigma} + \lambda_2), i)}{\eta(i)}
    e^{-\pi C_0(\sigma)^2}\right)^{\frac12 a(\lambda, n)},
   \end{gathered}
\]
where $\eta(z)$ $(z\in\Hp)$ is the Dedekind eta function, $C_0(\sigma) =  - \Re\hlf{x_0}{\sigma}-\lambda_{21}$ and $I_0$ is given by 
\[
-\sum_n \sum_{\substack{\lambda \\ \lambda_1 =0}} \sum_{x_0 \in D + \lambda_0}
  a(\lambda, -n) \sigma_1(n - \Qf{x_0}).
\]
\end{lemma}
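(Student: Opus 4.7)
My plan is to treat the rank $1$ and rank $0$ contributions separately, combine them using Definition \ref{def:psi_bo}, and identify the resulting expression with the claimed product.

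For the rank $1$ contribution, the starting point is the expression \eqref{eq:p1_rk1_fj0}, which after summing over $\alpha = 0$ orbits gives, for each fixed $(n, \lambda, x_0)$ with $\lambda_1 = 0$ and $\Qf{x_0} = n$, an inner sum of the form
\[
S(n, \lambda, x_0) = \sum_{(a,b)\in\Z^2\setminus\{0\}} \frac{1}{a^2 + b^2}\, e\bigl(-a(2\Re\hlf{x_0}{\sigma} + \lambda_{21}) + b(2\Im\hlf{x_0}{\sigma} + \lambda_{22})\bigr).
\]
This is a conditionally convergent lattice sum which I would regularize by Hecke's trick, multiplying by $|a + bi|^{-2s}$ and taking the value at $s=0$. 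Kronecker's second limit formula at $\tau = i$ then evaluates the regularized sum explicitly in terms of the Jacobi theta function $\vartheta_1(\,\cdot\,, i)$ and the Dedekind eta function, producing a term of the form $-\pi \log\bigl|\vartheta_1(-i(2\hlf{x_0}{\sigma} + \lambda_2), i)/\eta(i)\bigr|^2$ together with a real quadratic correction in the first variable of $\vartheta_1$, which is precisely $-\pi C_0(\sigma)^2$ with $C_0(\sigma) = -\Re\hlf{x_0}{\sigma} - \lambda_{21}$.  Summing over $n$, $\lambda$ and $x_0$ with weight $a(\lambda, n)$, dividing by $\pi$, and then exponentiating through Definition \ref{def:psi_bo} (which turns $-\frac12 \Phi$ into $\log|\Psi|$, accounting for the exponent $\tfrac12 a(\lambda, n)$) yields exactly the product over $\lambda_1 = 0$ in the statement.

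For the rank $0$ contribution, the term $\Phi_0(\myz, f, \psi_{p,1})$ is a convolution integral of $f$ against the definite theta series attached to $L \cap W$, which has signature $(p-1, 0)$. This is precisely the situation handled by Borcherds in \cite[\S 9]{Bo98} and reformulated in \cite{K16}; evaluating the regularized integral by unfolding and using the standard identity for $\int_{\SL_2(\Z)\backslash\Hp} \log|\eta(\tau)|^4\, d\mu(\tau)$ produces a term proportional to $a(0,0)\log|\eta(i)|^2$ together with the arithmetic constant $I_0 = -\sum a(\lambda,-n)\sigma_1(n - \Qf{x_0})$ arising from the Fourier coefficients of the Eisenstein-type object dual to the theta series.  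This matches the prefactor $e^{2\pi I_0}\eta(i)^{a(0,0)}$ after exponentiation.

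Finally, I would combine the two pieces: adding the rank $0$ and rank $1$ contributions to $(2i)^{-1}\Phi$ and comparing with \eqref{eq:def_psi_bo} shows that the term $a(0,0)\bigl(\log|\hlf{\myz}{\myz}| + \log(2\pi) - \gamma\bigr)$ in Definition \ref{def:psi_bo} is absorbed by the residual logarithmic contribution of the Borcherds computation (specifically the constant term of the Laurent expansion at $s=0$ of the regularized integral, in which $\gamma$ and $\log(2\pi)$ enter via $\Gamma'(1)$ and the functional equation of $\eta$), leaving exactly the product expansion stated in the lemma.  The main obstacle I anticipate is the careful bookkeeping of the various multiplicative constants ($\pi$, $\gamma$, $\log(2\pi)$) and signs in matching Kronecker's limit formula and Borcherds' regularization against Definition \ref{def:psi_bo}; the zero-free region and absolute convergence follow directly from Lemma \ref{lemma:Bop_nonsingular} together with the asymptotic growth of the coefficients $a(\lambda,n)$.
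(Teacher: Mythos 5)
Your overall strategy coincides with the paper's: the rank~1 contribution \eqref{eq:p1_rk1_fj0} is a lattice sum $\sum_{(a,b)\neq(0,0)}\abs{a+ib}^{-2}e(C_0a+C_1b)$ evaluated via Kronecker's second limit formula at $z=i$, producing the quotient $\vartheta_1(C_1-iC_0,i)/\eta(i)$ together with the correction $e^{-\pi C_0^2}$, and the rank~0 term is handled by citing Borcherds \cite[Sec.~9]{Bo98} via \cite{K16}. The exponent $\tfrac12 a(\lambda,n)$ from Definition \ref{def:psi_bo} is also accounted for correctly.

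There is, however, a genuine gap in your treatment of the degenerate rank~1 terms, namely those with $C_0=C_1=0$ (e.g.\ $n=0$, $\lambda=0$, $x_0=0$). For these the second Kronecker limit formula simply does not apply: the character is trivial, the Epstein zeta function $\sum'\abs{a+ib}^{-2s}$ has a pole at the relevant point, and formally the formula would return $\log\abs{\vartheta_1(0,i)}=-\infty$. The paper isolates exactly this case and treats it with the \emph{first} Kronecker limit formula (after inserting the $s$-dependence), and it is precisely this term that produces the factor $\eta(i)^{a(0,0)}$. You instead attribute $\eta(i)^{a(0,0)}$ to the rank~0 convolution integral; but by Borcherds/Kudla that integral contributes only the arithmetic constant $e^{2\pi I_0}$ with $I_0=-\sum a(\lambda,-n)\sigma_1(n-\Qf{x_0})$, and no eta factor. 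So as written your argument both leaves the degenerate rank~1 term untreated (where your chosen tool fails) and assigns its output to the wrong piece of the decomposition. The fix is local: split the rank~1 sum into the terms with $(C_0,C_1)\neq(0,0)$, handled as you propose, and the term with $C_0=C_1=0$, handled by the first limit formula; the latter also supplies the constants ($\gamma$, $\log 2\pi$, and the $\log\abs{\hlf{\myz}{\myz}}$ coming from $\Im z$) that are absorbed into Definition \ref{def:psi_bo}.
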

\begin{proof}
The rank 1 terms can be worked out as in \citep[][Section 4.3]{K16}. We sketch part of the argumentation.
  The total contribution of rank 1 terms to $\Phi(\myz, f,\psi)$ is given by \eqref{eq:p1_rk1_fj0}. Thus, on the left hand side of \ref{eq:def_psi_bo}  we have
  \begin{equation}\label{eq:phi1_sum}    
    \sum_{n,\lambda} a(\lambda, n) \sum_{a,b} \frac{1}{\pi} \frac{1}{\abs{a + ib}^2}  e\left(C_0 a + C_1 b \right),  
  \end{equation} 
  with $C_0 = -2\Re\hlf{x_0}{\sigma} - \lambda_{21}$ and $C_1 = + 2\Im\hlf{x_0}{\sigma} + \lambda_{22}$.
We treat the case that $C_0$ and $C_1$ are not both zero. Here, one applies the second Kronecker limit formula \citep[see][(39) on p. 28]{STata65}
   in the form 
   \[
   	\sum_{\substack{m,n \\ (m,n) \neq (0,0)} } e^{2\pi i (n C_0 + m C_1)} \frac{\Im z}{\abs{m z+ n}^2} 
   	= - \pi \log\abs{ \frac{\vartheta_1( C_1 - C_0 z, z)}{\eta(z)} e^{\pi i z C_0^2 } }^2    \quad(z\in\Hp).
      \]
 	Setting $z= i$, we get the contribution
  \[
    - \sum_{n, \lambda}a(\lambda, n)
    \log \abs{ \frac{\vartheta_1(C_1 - iC_0, i) }{\eta(i)} e^{-\pi C_0^2}}^2,  
  \]
  yielding, together with \eqref{eq:def_psi_bo}, the infinite product over theta values.
 
  The case where both $C_0$ and $C_1$ vanish, e.g.~with $n=0$ and $\lambda = 0$ (and thus with $x_0 = 0$), can be treated somewhat similarly through the the first Kronecker limit formula (after writing \eqref{eq:p1_rk1_fj0} with an s dependence), yielding the factor $\eta(i)^{a(0,0)}$. 
  
Finally, by \citep[][]{K16} (and \cite{Bo98}) the rank 0 terms contribute merely
\[
  e^{2\pi I_0} \quad\text{with}\;
  I_0 = -\sum_n \sum_{\substack{\lambda \\ \lambda_1 =0}} \sum_{x_0 \in D + \lambda_0}
  a(\lambda,-n) \sigma_1(n - \Qf{x_0}).
\]
\end{proof}

Now, let us summarize the results of this section  in the following theorem. 
\begin{theorem}
	Let $f \in \MfLw{k}{L^-}$ be a weakly holomorphic modular form with Fourier expansion $f = \sum_{\lambda \in L^\dual/L} \sum_{n\gg -\infty} a(\lambda,n) q^n \ebase_\lambda$. Then, the Borcherds form $\Psi(\myz, f)$ is an automorphic form on $\Dom$, which in a suitable neighborhood of the cusp associated to $[\ell]$ has an absolutely convergent infinite product expansion of the form 
	\[
          \Psi(\myz, f)
          = \Psi_0 \cdot\Psi_1(\sigma)\cdot \Psi_2(\tau_\ell, \sigma).
        \]
        Of the three factors, the second and third are infinite products and correspond to the contribution of the terms of rank $1$ with $n\neq 0$ and the terms of rank $2$, respectively. 
        The factors are given as follows.
        \begin{enumerate}
          \item
        The first factor is a constant depending on $f$ and given by 
	\[
		\Psi_0(f) = Ce^{2\pi I_0} \eta(i)^{a(0,0)}, \;\text{with}\quad 
	   I_0	= -\sum_n \sum_{\substack{\lambda \in L^\sharp/L\\ \lambda_1 =0}} \sum_{x_0 \in D + \lambda_0}
		a(\lambda, -n) \sigma_1(n - \Qf{x_0}),
              \]
             wherein $\eta(i)$ the value of the Dedekind eta-function in $i\in\Hp$ and $C$ a constant of absolute value one.
	\item
	The second factor $\Psi_1(\sigma)$ is given by 
	\begin{gather*}
		\Psi_1(\sigma) =  \prod_{n}\prod_{\substack{\lambda \in L^\sharp/L\\ \lambda_1 = 0 }} 
		\Bigl(
		\prod_{\substack{x_0 \in D + \lambda_0 \\ \Qf{x_0} = n}}
		\frac{\vartheta_1 (-i( 2\hlf{\sigma}{x_0} + \lambda_2), i)}{\eta(i)}
		e^{-\pi C_0(\sigma)^2}\Bigr)^{\frac12 a(\lambda, n)}, 
	\end{gather*} 
	wherein $\vartheta_1(w,z)$ $(w\in \C, z \in \Hp)$ denotes Jacobi's elliptic theta function, and $C_0(\sigma) = -2\blf{x_0}{\sigma} - \lambda_{21}$. 
\item
	The third factor $\Psi_2(\tau_\ell, \sigma)$ takes the form
	\begin{gather*}
	\begin{aligned}
		\Psi_2 & (\tau_\ell, \sigma)  = \prod_{\lambda \in L^\sharp/L} \prod_{n \in \Z - \Qf{\lambda}} \\ & \quad\cdot\prod_{\mathclap{\substack{x_0 =  \lambda_0 + D \\  a \mid n + \hlf{x_0}{x_0} + a\lambda_{21}}}}\quad
			\Bigl(
			1 - 
			e\Bigl(-\lambda_{22} +2a\tau_\ell +  i\Bigl(   
			- 2\hlf{x_0}{\sigma}   +  \frac{1}{a} \left( n  + \hlf{x_0}{x_0} \right)\Bigl)    \Bigr)^{\frac12 a(\lambda, n)}.
		\end{aligned}
	\end{gather*}
\end{enumerate}
      \end{theorem}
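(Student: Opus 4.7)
The plan is to assemble the three factors of $\Psi(\mathfrak{z}, f)$ by reading off the contributions of rank $0$, rank $1$, and rank $2$ orbits in the decomposition \eqref{eq:def_sum_phi} of $\Phi(z,f,\psi_{p,1})$ and then converting each via Definition \ref{def:psi_bo}. Automorphy of $\Psi(\mathfrak{z},f)$ is already granted by Remark \ref{rmk:psi_automorphic} (compatibility of our definition with Borcherds' via the pullback embedding of \cite{Hof11, Hof14}); hence it suffices to verify the product formula on the neighborhood of the cusp attached to $[\ell]$ where the factors converge absolutely.

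First I would pass to Siegel domain coordinates $(\tau_\ell, \sigma)$ through \eqref{eq:group_siegel_coords} and the $NAM$-operation \eqref{eq:Gop_onH}, so that the Fourier-Jacobi expansion of Corollary \ref{cor:fj_siegel_dom} applies. The rank $2$ contribution is immediate: Lemma \ref{lemma:Bop_nonsingular} already gives the infinite product $\Psi_2(\tau_\ell,\sigma)$ exactly as claimed, together with the non-vanishing/convergence condition $\hlf{\mathfrak{z}}{\mathfrak{z}} > n_0/a_0^2$, which carves out the required neighborhood of the cusp.

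Next I would treat the rank $1$ contribution with $(C_0, C_1) \neq (0,0)$ as in Lemma \ref{lemma:Bprod_rank01}: plugging $a^2+b^2 = |a+ib|^2$ into the rank $1$ Fourier-Jacobi coefficient from \eqref{eq:p1_rk1_fj0} and applying the second Kronecker limit formula at $z=i$ converts the Eisenstein-like sum into $-\log|\vartheta_1(C_1 - iC_0, i)/\eta(i) \cdot e^{-\pi C_0^2}|^2$. After taking $\log|\Psi|^2$ via Definition \ref{def:psi_bo} this gives the $\Psi_1(\sigma)$ factor. The degenerate rank $1$ pieces with $(C_0,C_1) = (0,0)$ (i.e.\ $n=0$, $\lambda=0$, $x_0=0$) are regularized by inserting the $v^{-s}$ factor in \eqref{eq:s_dep_trick} and applying the first Kronecker limit formula instead: this is the standard move from \cite{K16} and produces the factor $\eta(i)^{a(0,0)}$ that sits in $\Psi_0$.

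Finally the rank $0$ term $\Phi_0(z, f, \psi_{p,1})$ is a convolution of $f$ against a definite theta series on $L\cap W$, and its value is extracted following \citep[Sec.~9]{Bo98} (or the summary in \cite{K16}), producing the constant $e^{2\pi I_0}$ with $I_0 = -\sum_{n,\lambda,x_0} a(\lambda,-n)\,\sigma_1(n - \Qf{x_0})$ indexed as in the statement. Collecting the three contributions and exponentiating $-\tfrac12$ times \eqref{eq:def_psi_bo}, the $-a(0,0)(\log|\hlf{\mathfrak{z}}{\mathfrak{z}}| + \log(2\pi) - \gamma)$ summand is absorbed (up to a unit constant $C$ of absolute value one) into $\Psi_0$. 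The main obstacle is bookkeeping: one must carefully track the factor of $2$ coming from $\pm\gamma \in \SL_2(\Z)$, the sign $\epsilon'$ in Remark \ref{rmk:p1_groupop}, the multiplicity arising from the choice of orbit representatives in Lemma \ref{lemma:Ks_reprs}, and the passage between the raw coordinates $(w,r,t)$ and the Siegel coordinates $(\tau_\ell,\sigma)$ with its automorphy factor $j(a(t),\mathfrak{z}) = |\tfrac12\hlf{\mathfrak{z}}{\mathfrak{z}}|^{-1/2}$; once these match up, each of the three factors assumes precisely the form stated.
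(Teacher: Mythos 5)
Your proposal is correct and follows essentially the same route as the paper: automorphy is delegated to Remark \ref{rmk:psi_automorphic}, the factor $\Psi_2$ is read off from Lemma \ref{lemma:Bop_nonsingular}, and the factors $\Psi_0$ and $\Psi_1$ (including the Kronecker limit formula arguments and the Borcherds-style evaluation of the rank $0$ term) come from Lemma \ref{lemma:Bprod_rank01}, exactly as in the paper's two-line proof. The extra detail you supply is simply an unpacking of what those two lemmas already establish.
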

\begin{proof}
	By Remark \ref{rmk:psi_automorphic} it follows through the theory of embeddings $\Ug(V) \hookrightarrow \Og(V_\R)$  from \cite{Hof11, Hof14} that $\Psi(\myz, f)$ is an automorphic form. 
	We have already seen the factors of the infinite product expansion in the two Lemmas \ref{lemma:Bop_nonsingular} and \ref{lemma:Bprod_rank01}.
\end{proof}

\section{Calculation of the unfolding integrals}\label{sec:unfolding}
In this section, we evaluate the unfolding integrals for the theta lift, providing all the Lemmas used in the proof of Theorem \ref{thm:Phipq_atz0} above. We use the notation introduced in Section \ref{sec:pq_lift} (in particular, see Notation \ref{not:def_ABC}).

Also, recall that for the non-zero terms, a factor of $2$ occurs after unfolding, since $-1_2 \in \Gamma$ operates trivially, so that $\gamma$ and $-\gamma$ give the same contribution, see the definition of the $\Phi_i(z,f,\psipq)$ in \eqref{eq:def_sum_phi}.

\subsection{Non-singular terms} \label{subsec:unfold2}
First, we calculate the contribution of the terms where the matrix $\eta = [\beta, \beta']$ is non-singular. By Lemma  \ref{lemma:Ks_reprs} a set of representatives for these $\eta$ under the operation of $\SL_2(\Z)$  is given by the rational matrices 
\[
\begin{pmatrix}
a & b \\ 0 & \alpha 
\end{pmatrix} 
\in \mathrm{M}_2(\Q) \quad\text{with}\quad
a>0,\; \alpha \neq 0 \; \text{and}\; b \pmod{a\Z}.
\]
Since the stabilizer is trivial, for fixed $\eta$,  the unfolding integrals take the form  
\[
\begin{lgathered}
\sum_{n} \left( \hat{a}^{-}(n) \phi_2(n, \eta)^{-}(z_0) + \hat{a}^+ (n) \phi_2(n, \eta)^+(z_0) \right) = \\
= 
\sum_{n} \hat{a}^+(n) 
\int_{\mathbb{H}}^{reg} 
\widehat\psipq\left(\sqrt{2}(\eta, x_0), \tau, z_0\right)  e^{2\pi i n u} e^{-2\pi n v} v^{-s-2} du\, dv  \\
+  \sum_{n} \hat{a}^-(n) 
\int_{\mathbb{H}}^{reg} 
\sum_{\gamma \in \Gamma_\eta \backslash \Gamma} 
\widehat\psipq(\sqrt{2}(\eta, x_0), \tau, z_0)   e^{2\pi i n u} \Gamma(1-k, 4\pi \abs{n}v)  v^{-s-2} du \, dv.
\end{lgathered}
\]
Now, the inner integral, over $u$ ranging through $\R$, is simply a Fourier transform,
which we shall calculate first.
For the outer integral over $v$, ranging over $\R_{>0}$,
we will make use of the integral representations of Bessel functions (cf.\ Appendix \ref{sec:functions}). 

\subsubsection{Preliminary lemmas}
Let us first gather some lemmas. The first three will help us to calculate the Fourier transform.
\begin{lemma} \label{lemma:FT_exp_part}
	The Fourier transform 
	\[
	\int_{-\infty}^\infty e^{-\frac{2\pi}{v} \left( \abs{\beta'}^2 + \abs{\tau}^2 \abs{\beta}^2
		+ 2u\Re\left( \beta'\bar\beta\right)\right) +2\pi i u\hlf{x_0}{x_0} +2\pi v \left( \hlf{x_{0,-}}{x_{0,-}} - \hlf{x_{0,+}}{x_{0,+}} \right)} e^{2\pi i n u} du, 
	\]	
	with $n\neq 0$ is given by
	\[
	e^{+2\pi n v} \frac{v^{\frac12}}{\sqrt{2}\abs{\beta}}
	\exp\left( -\frac{v \pi}{\abs{\beta}^2}\AAb{\eta} -  \frac{ \pi}{v \abs{\beta}^2}  \BB_\eta \right) e\bigl( -\CC_\eta\left( \hlf{x_0}{x_0}  + n \right)\bigr),
	\]
	with $\AAb{\eta} = \frac12 \left(n + \abs{\beta}^2 + \hlf{x_0}{x_0} \right)^2 - 4\hlf{x_{0,+}}{x_{0,+}}\abs{\beta}^2$,
	$\frac12 \BB_\eta =  \abs{\beta'}^2 \abs{\beta}^2 - \Re\left(\beta'\bar\beta\right)^2 $
	and $\CC_\eta =  \Re\left(\beta'\bar\beta \right)\abs{\beta}^{-2}$  from \eqref{eq:def_A_eta} and \eqref{eq:def_BC}.
\end{lemma}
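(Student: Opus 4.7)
The plan is to recognize the integral as a single complex Gaussian in the real variable $u$ and evaluate it by completing the square. First, I would expand $\abs{\tau}^2 = u^2 + v^2$ inside the exponent and separate the terms according to their dependence on $u$. The quadratic-in-$u$ piece contributes $-\tfrac{2\pi\abs{\beta}^2}{v} u^2$, the linear piece combines into $\bigl(-\tfrac{4\pi}{v}\Re(\beta'\bar\beta) + 2\pi i(\hlf{x_0}{x_0}+n)\bigr) u$, and everything else is collected as a $u$-independent prefactor.

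Next, I would complete the square in $u$. With quadratic coefficient $a = -\tfrac{2\pi\abs\beta^2}{v}$ and linear coefficient $b$ as above, Fubini and the standard (contour-shifted) Gaussian identity $\int_{-\infty}^{\infty}e^{au^2+bu}\,du = \sqrt{\pi/(-a)}\,e^{-b^2/(4a)}$ yields the Gaussian normalization $\sqrt{\pi/(-a)} = v^{1/2}/(\sqrt 2\,\abs\beta)$, which is exactly the prefactor in the asserted formula. The remaining task is to simplify $-b^2/(4a) = vb^2/(8\pi\abs\beta^2)$ and combine it with the original $u$-independent exponent.

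The bookkeeping step is then to reorganize the resulting terms by their dependence on $v$. The $1/v$-terms combine into
\[
-\frac{2\pi}{v\abs\beta^2}\bigl(\abs{\beta'}^2\abs\beta^2 - \Re(\beta'\bar\beta)^2\bigr) = -\frac{\pi\BB_\eta}{v\abs\beta^2},
\]
the imaginary $v$-independent term collapses to $-2\pi i\CC_\eta(\hlf{x_0}{x_0}+n)$ by the definition of $\CC_\eta$, and the $v$-linear terms $-2\pi v\abs\beta^2 + 2\pi v(\hlf{x_{0,-}}{x_{0,-}}-\hlf{x_{0,+}}{x_{0,+}}) - \tfrac{\pi v(\hlf{x_0}{x_0}+n)^2}{2\abs\beta^2}$ are to be rewritten as $-\tfrac{\pi v}{\abs\beta^2}\AAb{\eta} + 2\pi n v$. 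This last identification uses $\hlf{x_0}{x_0} = \hlf{x_{0,+}}{x_{0,+}} + \hlf{x_{0,-}}{x_{0,-}}$ together with the closed-form expression for $\AAb{\eta}$ in Notation \ref{not:def_ABC}, and accounts for the separately displayed factor $e^{+2\pi nv}$.

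The only real obstacle is the algebraic reconciliation between the coefficient of $v$ produced by the Gaussian integration and the specific form of $\AAb{\eta}$ recorded in the lemma; this is a routine identity. No issues of convergence arise, since $\Re(a)<0$ and the integrand decays as a genuine Gaussian in $u$.
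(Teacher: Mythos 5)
Your proposal is correct and is essentially the paper's argument: the paper evaluates the same integral by applying Borcherds' Fourier-transform formula (Lemma \ref{lemma:FT_Bo}) with $p\equiv 1$, $A = i\abs{\beta}^2/v$, $B = \tfrac{2i}{v}\Re(\beta'\bar\beta) + \hlf{x_0}{x_0}$ and a suitable $C$, which is exactly the completed-square Gaussian identity you use directly, and the subsequent regrouping of the $1/v$-, $v$- and imaginary terms into $\BB_\eta$, $\AAb{\eta}$ (with $e^{+2\pi nv}$ split off) and $\CC_\eta$ matches your bookkeeping step. Your closing remark on convergence is also fine, since $\beta\neq 0$ for non-singular $\eta$ and $v>0$.
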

\begin{proof}
	With \eqref{eq:FT_Bo} setting
	\begin{equation}\label{eq:ABC_forFT}
	\begin{gathered}
	A = \frac{i\abs{\beta}^2}{v},
	\quad B = \frac{2i}{v} \Re(\beta'\bar\beta) + \hlf{x_0}{x_0}\quad\text{and} \\
	C = i\Bigl[\frac{\abs{\beta'}^2}{v} + v\abs{\beta}^2   +  v\left( \hlf{x_{0,+}}{x_{0,+}}
	- \hlf{x_{0,-}}{x_{0,-}}\right) \Bigr],
	\end{gathered}
	\end{equation}
	one obtains the Fourier transform
	\[
	\begin{aligned}
	\frac{\sqrt{v}}{\sqrt{2}\abs{\beta}} 
	\exp&\left( 2 \pi i \left(  - \frac{n^2}{4A} - \frac{n B}{2A}  - \frac{B^2}{4A} +C \right) \right) 	=  \\
	\frac{\sqrt{v}}{\sqrt{2}\abs{\beta}} 
	\exp&\biggl(-\pi   v \left[\frac{n^2}{2\abs{\beta}^2} + \frac{n\hlf{x_0}{x_0}}{\abs{\beta}^2}  
	+  \frac{\hlf{x_0}{x_0}^2}{2\abs{\beta}^2} + 2\left( \hlf{x_{0,+}}{x_{0,+}} - \hlf{x_{0,-}}{x_{0,-}}\right)   + 2\abs{\beta}^2 \right]  \\ 
	& \qquad
	- \frac{\pi}{v}\left[ 2\abs{\beta'}^2- \frac{2\Re\left(\beta'\bar\beta\right)^2}{\abs{\beta}^2}\right] -2\pi i\frac{\Re(\beta'\beta)}{\abs{\beta}^2}\left( \hlf{x_0}{x_0} + n \right)
	\biggr).
	\end{aligned}
	\]
	After multiplying with $e^{-2\pi n v}$, one has
	\[
	\begin{gathered}
	\frac{\sqrt{v}}{\sqrt{2}\abs{\beta} }
	\exp\biggl( -\frac{v \pi}{\abs{\beta}^2} 
	\left[ \frac12 \left(n + 2\abs{\beta}^2 + \hlf{x_0}{x_0} \right)^2 - 4\hlf{x_{0,-}}{x_{0,-}}
	\abs{\beta}^2  - 2n\abs{\beta}^2 \right] - \pi v 2 n   \\
	-   \frac{2 \pi}{v \abs{\beta}^2} \left[
	\abs{\beta'}^2 \abs{\beta}^2 - \Re\left(\beta'\bar\beta\right)^2\right]
	\biggr)  \cdot
	e^{-2\pi i\frac{\Re(\beta'\bar\beta)}{\abs{\beta}^2}\left( \hlf{x_0}{x_0}  +  n \right)}\\
	= \frac{\sqrt{v}}{\abs{\beta} }
	\exp\left( -\frac{v \pi}{\abs{\beta}^2}\AAb{\eta} -  \frac{ \pi}{v \abs{\beta}^2}  \BB_\eta \right)
	e^{-2\pi i\CC_\eta\left( \hlf{x_0}{x_0}  + n \right)},
	\end{gathered}
	\]	
	as claimed.
\end{proof}

\begin{lemma}\label{lemma:FT_phik1k2}
	The Fourier transform
	\[
	\int_{-\infty}^\infty (\beta' + \bar\tau\beta)^{k_1} (\bar\beta' + \bar\tau\bar\beta)^{k_2}
	e^{ -\frac{2\pi}{v}\left(\abs{\beta'}^2  + \abs{\beta}^2\abs{\tau}^2 
		+ 2u\Re\left( \beta'\bar\beta\right)\right) + 2\pi i \left(\tau \hlf{x_{0,+}}{x_{0,+}} + \bar\tau\hlf{x_{0,-}}{ x_{0,-}}\right)}
	e^{2\pi i nu} du,
	\]
	is given by
	\[
	e^{ + 2\pi n v}\frac{\sqrt{v}}{\sqrt{2}\abs{\beta} }\tilde{p}_{\eta}(v,n)
	\exp\left( -\frac{v \pi}{\abs{\beta}^2}\AAb{\eta} -  \frac{ \pi}{v \abs{\beta}^2}  \BB_\eta \right)
	e^{-2\pi i\CC_\eta\left( \hlf{x_0}{x_0}  + n\right)},
	\]
	with a polynomial $\tilde{p}_\eta( v, n)$ of the following form
	\[
	\begin{aligned}
		&
	\sum_{M=0}^{k_1 + k_2} R_{k_1, k_2}(\eta, M)  
	\sum_{j = 0}^{\left\lfloor \frac{M}{2}\right\rfloor}
	\sum_{\kappa =0}^{M-2j} 
	\frac{i^\kappa v^{\kappa + j}\AAm{\eta}(n)^\kappa}{\abs{\beta}^{2\kappa + 2j}}
	\frac{\left( -\CC_\eta\right)^{M-\crampedrlap{2j-\kappa}} }{2^{\kappa + 3j} \pi^j} 
	\frac{M!}{j! (M -2j)!}\binom{M-2j}{\kappa},
	\end{aligned}
	\]
	wherein
	\[
	R_{k_1, k_2}(\eta, M) \vcentcolon =
	\sum_{\substack{ 0\leq \mu_1 < k_1\\ 0 \leq \mu_2 \leq k_2\\ \mu_1 + \mu_2 = M }} 
	\beta^{\mu_1}\bar\beta^{\mu_2} \beta'^{k_1 -\mu_1} \bar\beta'^{k_2 - \mu_2}
	\binom{k_1}{\mu_1}\binom{k_2}{\mu_2}\;.
	\]
	Note that if $k_1 = k_2 =k$, the coefficient $R_{k,k}(\eta, M)$ is real and can be written in the form
	\[
	\sum_{\substack{ 0\leq \mu_1, \mu_2 < k \\ \mu_1 + \mu_2 = M }}
	\Re\left( \beta^{\mu_1} \bar\beta^{\mu_2} \beta'^{k-\mu_1} \bar\beta'^{k-\mu_2} \right) \binom{k}{\mu_1} \binom{k}{\mu_2}. 
	\]
\end{lemma}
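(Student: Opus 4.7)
The plan is to build directly on Lemma \ref{lemma:FT_exp_part}, handling the polynomial prefactor through a binomial expansion in $\bar\tau$ and then applying Gaussian moment formulas, with the completing-the-square step adjusted to track the shift inherited from $\bar\tau = u - iv$.

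First, I would expand
\[
(\beta' + \bar\tau\beta)^{k_1}(\bar\beta' + \bar\tau\bar\beta)^{k_2}
= \sum_{\mu_1,\mu_2}\binom{k_1}{\mu_1}\binom{k_2}{\mu_2}\,\beta^{\mu_1}\bar\beta^{\mu_2}\beta'^{k_1-\mu_1}\bar\beta'^{k_2-\mu_2}\,\bar\tau^{\mu_1+\mu_2},
\]
and collect by the total power $M = \mu_1+\mu_2$. The coefficient of $\bar\tau^M$ is precisely $R_{k_1,k_2}(\eta,M)$, so the problem reduces to evaluating, for each $M\geq 0$,
\[
\mathcal{I}_M \vcentcolon= \int_{-\infty}^\infty (u - iv)^M\, G(u)\, e^{2\pi i n u}\,du,
\]
where $G(u)$ is the Gaussian integrand of Lemma \ref{lemma:FT_exp_part}.

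Next, I would redo the completing-the-square analysis from that lemma, now keeping track of the Gaussian centre. Combining the $u^2$-coefficient $-2\pi|\beta|^2/v$ with the linear coefficient $-4\pi\Re(\beta'\bar\beta)/v + 2\pi i(n+\hlf{x_0}{x_0})$ yields
\[
u_c = -\CC_\eta + \frac{iv\bigl(n + \hlf{x_0}{x_0}\bigr)}{2\abs{\beta}^2},
\qquad\text{so that}\qquad
u_c - iv = -\CC_\eta + \frac{iv\,\AAm{\eta}(n)}{2\abs{\beta}^2}.
\]
The key (slightly subtle) observation is that the $-iv$ coming from $\bar\tau = u-iv$ combines with the $+iv(n+\hlf{x_0}{x_0})/(2\abs{\beta}^2)$ in $u_c$ to produce $\AAm{\eta}(n) = n - 2\abs{\beta}^2 + \hlf{x_0}{x_0}$ (not $\AAp{\eta}(n)$); this is exactly what accounts for the appearance of $\AAm{\eta}$ in the statement. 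After substituting $u = u' + u_c$, the integral $\mathcal{I}_M$ factorises as the Gaussian normalisation already computed in Lemma \ref{lemma:FT_exp_part} times
\[
\int_{-\infty}^\infty \bigl(u' + (u_c - iv)\bigr)^M\, e^{-(2\pi\abs{\beta}^2/v)\,u'^2}\,du'.
\]
I would expand this binomially in $u'$; only even powers $u'^{2j}$ survive, and the classical moment formula $\int u'^{2j}e^{-\mathcal{A}u'^2}du' = \sqrt{\pi/\mathcal{A}}\,(2j)!/(j!\,(4\mathcal{A})^j)$ with $\mathcal{A} = 2\pi\abs{\beta}^2/v$ produces the factors $v^j/(2^{3j}\pi^j\abs{\beta}^{2j})$ and $M!/(j!(M-2j)!)$ indexed by $j$. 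A second binomial expansion of $(u_c-iv)^{M-2j}$ then separates the contributions $-\CC_\eta$ and $iv\,\AAm{\eta}/(2\abs{\beta}^2)$, giving the inner sum over $\kappa$ together with the factors $(-\CC_\eta)^{M-2j-\kappa}$ and $i^\kappa v^\kappa\AAm{\eta}^\kappa/(2^\kappa\abs{\beta}^{2\kappa})\binom{M-2j}{\kappa}$. The common prefactor $\sqrt{\pi/\mathcal{A}} = \sqrt{v}/(\sqrt{2}\abs{\beta})$ coincides with that of Lemma \ref{lemma:FT_exp_part}, and multiplying through by $e^{+2\pi n v}$ (to convert from $e^{-2\pi nv}G$ back to the stated form) yields the desired formula.

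The main effort will be bookkeeping: keeping the three exponents $2j$, $\kappa$ and $M-2j-\kappa$ cleanly separated and verifying that the constants assemble as $2^{-\kappa-3j}\pi^{-j}$. The closing observation that $R_{k,k}(\eta,M)$ is real is then immediate by pairing each index tuple $(\mu_1,\mu_2)$ with $(\mu_2,\mu_1)$: the two associated monomials in $\beta,\bar\beta,\beta',\bar\beta'$ are complex conjugates of each other, so their sum equals twice its real part, while any diagonal term with $\mu_1 = \mu_2$ is already real.
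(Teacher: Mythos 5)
Your proposal is correct and follows essentially the same route as the paper: the paper also expands the polynomial prefactor in powers of $\bar\tau$ to isolate $R_{k_1,k_2}(\eta,M)$, and its appeal to Lemma \ref{lemma:FT_Bo} (the operator $\exp(\tfrac{i}{8\pi A}\tfrac{d^2}{dt^2})$ applied to $\bar\tau^M$, followed by substituting $\bar\tau \mapsto \tfrac{iv}{2\abs{\beta}^2}\AAm{\eta} - \CC_\eta$) is exactly the packaged form of your completing-the-square plus Gaussian-moment calculation, landing on the identical $j$- and $\kappa$-sums. Your identification of the $-iv$ shift as the source of $\AAm{\eta}$ and your conjugation argument for the reality of $R_{k,k}$ both match the paper; the only point worth a word in a write-up is that the shift to the complex centre $u_c$ is justified by a standard contour deformation.
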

\begin{proof}
	We need  only calculate the contribution of the polynomial, 
	the rest follows from Lemma \ref{lemma:FT_exp_part}. 
	Rewrite the polynomial in the form
	\begin{gather*}
	\left(\beta' + \bar\tau \beta \right)^{k_1}   \left(\bar\beta' + \bar\tau \bar\beta \right)^{k_2} =
	\sum_{\mu_1 = 0}^{k_1} \sum_{\mu_2= 0}^{k_2} \bar\tau^{\mu_1 + \mu_2} 
	\beta^{\mu_1} \bar\beta^{\mu_2} \beta'^{k-\mu_1} \bar\beta'^{k-\mu_2} \binom{k_1}{\mu_1} \binom{k_2}{\mu_2} \\
	= \sum_{M = 0}^{k_1 + k_2}  \bar\tau^{M} R_{k_1, k_2}([\beta, \beta'], M), 
	\end{gather*}
	with $R_{k_1, k_2}([\beta, \beta'], M)$ as above.
	Setting $A$, $B$ and $C$ as in \eqref{eq:ABC_forFT} from the proof of Lemma \ref{lemma:FT_exp_part},
	by Lemma \ref{lemma:FT_Bo} one has to apply $\exp\left(\frac{v}{8\pi \abs{\beta}^2} \frac{d^2}{d u^2} \right)$ and obtains
	\[
	\sum_{M=0}^{k_1 + k_2} \sum_{j = 0}^{\left\lfloor \frac{M}{2}\right\rfloor}
	\left[\frac{v}{8\pi \abs{\beta}^2} \right]^j \bar\tau^{M-2j} \frac{M!}{j! (M -2j)!}
	R_{k_1, k_2}([\beta, \beta'], M),
	\]
	where, further $\bar\tau$ is to be replaced by
	\[
	- \frac{n +B}{2A} -iv = 
	\frac{iv}{2\abs{\beta}^2} \left( n  - \abs{\beta}^2 + \hlf{x_0}{x_0} \right)
	- \frac{\Re(\beta'\bar\beta)}{\abs{\beta}^2} = \frac{iv}{2\abs{\beta}^2} \AAm{\eta} - \CC_\eta. 
	\]
	Thus, the polynomial part of the Fourier transform is given by
	\[
	\begin{lgathered}
	\tilde{p}_\eta( v, n) =  
	\sum_{M = 0}^{k_1 + k_2} R_{k_1, k_2} (\eta; M) \sum_{j = 0}^{\lfloor \frac{ M}{2}\rfloor} \frac{1}{\pi^j}\frac{1}{ \abs{\beta}^{2( M -j)}}  \\  \quad\cdot\sum_{\kappa=0}^{M-2j} \frac{i^\kappa v^{\kappa +j}}{2^{\kappa + 3j}}   \AAm{\eta}^{\kappa} \left( - \Re\left(\beta'\bar\beta\right)\right)^{M - 2j - \kappa} \binom{M -2 j}{\kappa} \frac{M!}{j! (M-2j)!}.
	\end{lgathered}
	\]
	With the definition of $\CC_\eta$ this gives the claimed form. 
\end{proof}

\begin{lemma}\label{lemma:FT_mzero}
	The integral 
	\[
	\begin{multlined}
	\int_{-\infty}^\infty (\beta' + \bar\tau\beta)^{k_1} 
	(\bar\beta' + \bar\tau \bar\beta)^{k_2} e\bigl( u\hlf{x_0}{x_0}\bigr) \\
	\cdot \exp\left(-\frac{2\pi}{v} \left( \abs{\beta'}^2 + \abs{\tau}^2 \abs{\beta}^2
	+ 2u\Re\left( \beta'\bar\beta\right)\right)
	+2\pi v \left( \hlf{x_{0,-}}{x_{0,-}}
	- \hlf{x_{0,+}}{x_{0,+}} \right)\right) du, 
	\end{multlined}
	\]
	is given by
	\[
	\frac{v^{\frac12}}{\sqrt{2}\abs{\beta}} \tilde{p}_\eta(v,0)
	\exp\left( - v\frac{\pi}{\abs{\beta}^2}
	\AAb{\eta}(0)  - \frac{1}{v}\frac{\pi}{\abs{\beta}^2} \BB_\eta\right)
	e^{-2\pi i \CC_\eta\hlf{x_0}{x_0}},  
	\]
	where $\tilde{p}_\eta$ is the polynomial from Lemma \ref{lemma:FT_phik1k2},
	and $\AAb{\eta}(0)$ is given by
	\[
	\frac12 \left( 2\abs{\beta}^2 + \hlf{x_0}{x_0}\right)^2 - 4\hlf{x_{0,+}}{x_{0,+}}\abs{\beta}^2.\]
\end{lemma}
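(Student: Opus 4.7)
The claim is essentially Lemma \ref{lemma:FT_phik1k2} specialized to $n = 0$, so the plan is to reduce to that computation and verify the specialization of the right-hand side.

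First I would match the two integrands. Writing $\tau = u + iv$ and using $\hlf{x_0}{x_0} = \hlf{x_{0,+}}{x_{0,+}} + \hlf{x_{0,-}}{x_{0,-}}$, a direct expansion gives
\[
2\pi i\bigl(\tau \hlf{x_{0,+}}{x_{0,+}} + \bar\tau \hlf{x_{0,-}}{x_{0,-}}\bigr) = 2\pi i u \hlf{x_0}{x_0} + 2\pi v\bigl(\hlf{x_{0,-}}{x_{0,-}} - \hlf{x_{0,+}}{x_{0,+}}\bigr),
\]
so that the $x_0$-dependent exponent in the integrand at hand coincides exactly with the corresponding exponent appearing in Lemma \ref{lemma:FT_phik1k2}. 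The remaining factors $(\beta' + \bar\tau\beta)^{k_1}(\bar\beta' + \bar\tau\bar\beta)^{k_2}$ and the $\tau$-dependent exponential in $\beta,\beta'$ are identical in both statements. Consequently, the present integrand is obtained from that of Lemma \ref{lemma:FT_phik1k2} by deleting the oscillating factor $e^{2\pi i n u}$, i.e.\ by setting $n = 0$.

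Next I would specialize the right-hand side of Lemma \ref{lemma:FT_phik1k2}. At $n = 0$ the prefactor $e^{2\pi n v}$ collapses to $1$, the polynomial $\tilde p_\eta(v,n)$ becomes $\tilde p_\eta(v,0)$, and the phase $e^{-2\pi i \CC_\eta(\hlf{x_0}{x_0} + n)}$ reduces to $e^{-2\pi i \CC_\eta \hlf{x_0}{x_0}}$. For the $v$-coefficient in the exponent, substituting $n=0$ into the defining expression
\[
\AAb{\eta}(n,x_0) = \tfrac12 \bigl(n + \hlf{x_0}{x_0}\bigr)^2 + 2\abs{\beta}^4 + 2\abs{\beta}^2 \hlf{x_0}{x_0}_{z_0} + 2\abs{\beta}^2 n
\]
from Notation \ref{not:def_ABC}, and rewriting $\hlf{x_0}{x_0}_{z_0} = \hlf{x_{0,+}}{x_{0,+}} - \hlf{x_{0,-}}{x_{0,-}}$, yields the stated closed form for $\AAb{\eta}(0)$.

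There is no genuine obstacle: the lemma is isolated only because it is needed separately in Section \ref{sec:unfolding} for the evaluation of $\phi_i(0,\eta)^-$, in which the $u$-integration carries no exponential $e^{2\pi i n u}$, and it is preferable to record the specialized formula once here rather than repeat the deduction in that setting. Apart from this, no analysis beyond that in the proof of Lemma \ref{lemma:FT_phik1k2} is required; the only care needed is the purely algebraic bookkeeping when matching the two equivalent expressions for $\AAb{\eta}(0)$.
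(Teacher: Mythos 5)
Your argument is correct, and it reaches the result by a cleaner route than the paper: you reduce the statement to the $n=0$ specialization of Lemma \ref{lemma:FT_phik1k2}, whereas the paper re-runs the Gaussian Fourier transform computation of Lemma \ref{lemma:FT_exp_part} with $B=\tfrac{2i}{v}\Re(\beta'\bar\beta)$ (dropping the $\hlf{x_0}{x_0}$ summand) and with $\hlf{x_0}{x_0}$ now playing the role of the dual variable $\xi$ in Lemma \ref{lemma:FT_Bo}, which forces a fresh substitution for $\bar\tau$ in the polynomial part. The two bookkeepings agree because Borcherds' formula depends on $\xi$ and $B$ only through the combination $\xi+B$ (both in the evaluation point $-\tfrac{\xi+B}{2A}$ of the polynomial and in the exponent $-\tfrac{(\xi+B)^2}{4A}+C$), so it is immaterial how $\hlf{x_0}{x_0}$ is split between them; what your route costs is the obligation to note explicitly that the hypothesis $n\neq 0$ in Lemma \ref{lemma:FT_exp_part}, on which Lemma \ref{lemma:FT_phik1k2} rests, is inessential --- nothing in its proof degenerates at $\xi=n=0$ --- since otherwise the specialization is not formally licensed. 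One caveat on your last step: substituting $n=0$ and $\hlf{x_0}{x_0}_{z_0}=\hlf{x_{0,+}}{x_{0,+}}-\hlf{x_{0,-}}{x_{0,-}}$ into Notation \ref{not:def_ABC} gives $\AAb{\eta}(0)=\tfrac12\left(2\abs{\beta}^2+\hlf{x_0}{x_0}\right)^2-4\abs{\beta}^2\hlf{x_{0,-}}{x_{0,-}}$, i.e.\ with the negative-definite component (which is what makes $\AAb{\eta}$ non-negative), not the positive one printed in the Lemma; this $\pm$ index slip already occurs between the statement and the proof of Lemma \ref{lemma:FT_exp_part}, so it is the paper's typo rather than a gap in your argument, but your assertion that the substitution ``yields the stated closed form'' should be qualified accordingly.
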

\begin{proof}
	The integral is a Fourier transform. 
	Arguing similarly to the proof of  Lemma \ref{lemma:FT_exp_part}, with
	\[
	A = \frac{i\abs{\beta}^2}{v},\qquad  B=\frac{2i}{v}\Re\left( \beta' \bar\beta\right)
	\]
	we get the exponential factor
	\[
	\begin{multlined}
	\exp\left( - \pi v\left[ \frac{\hlf{x_0}{x_0}^2}{2\abs{\beta}^2}
	- \abs{\beta}^2    +     \left( \hlf{x_{0,-}}{x_{0,-}} - \hlf{x_{0,+}}{x_{0,+}} \right)
	\right]  
	-  \frac{\pi}{v} \left[ \frac{\Re\left( \beta' \bar\beta\right)^2}{\abs{\beta}^2} - \abs{\beta'}^2\right] \right) \\ \cdot\,
	e \left( -\frac{\hlf{x_0}{x_0}}{\abs{\beta}^2} \Re\left( \beta' \bar\beta\right)
	\right).
	\end{multlined}
	\]
	The polynomial is calculated as in Lemma \ref{lemma:FT_phik1k2}, 
	up to the last step where, now, $\bar\tau$ is replaced by
	\[
	- \frac{\hlf{x_0}{x_0} - B}{2A} - iv = \frac{iv}{2\abs{\beta'}}\left(\hlf{x_0}{x_0} - \abs{\beta}^2\right) - \frac{\Re(\beta'\bar\beta)}{\abs{\beta}^2}
	= \frac{iv}{2\abs{\beta}^2 }\AAm{\eta}(n=0) - \CC_\eta.
	\]
\end{proof}
Once the inner integrals are evaluated using the previous lemmas,
the following two lemmas allow us to evaluate the outer integrals.
\begin{lemma}\label{lemma:bessel_parts}
	Let $\ell$ be an integer. We have the following identity
	\begin{align*}
	\frac{1}{\abs{\beta}}
	\int_{0}^{\infty} 
	v^{-s-\frac{1}{2} - \ell} 
	\exp\left( -\frac{v \pi}{\abs{\beta}^2}\AAb{\eta} -  \frac{ \pi}{v \abs{\beta}^2}  \BB_\eta \right) dv \\
	= 	\frac{2}{\abs{\beta}}  \left(
	\frac{\AAb{\eta}}{\BB_\eta} \right)^{\frac12 \left( s + \ell - \frac12 \right)} 
	K_{-s+\frac12 -\ell}\left( \frac{2\pi}{\abs{\beta}^2} \AAb{\eta}^{\frac12} \BB_\eta^{\frac12} \right).
	\end{align*}
	Now,  for an integer $k>0$ denote by $h'_k$ the Bessel polynomial of index $k$ and set $h_0' = 1$.
	Then, for $s=0$, we have if $\ell \leq 0$:
	\[
	\frac{\AAb{\eta}^{\frac{\ell-1}{2}}}{\BB_\eta^{\ell/2}} 
	h_{-\ell}' \left( \frac{\abs{\beta}^2}{ 2\pi \AAb{\eta}^{\frac12} \BB_\eta^{\frac12}}   \right)
	\exp\left( - \frac{2\pi}{\abs{\beta}^2} \AAb{\eta}^{\frac12} \BB_\eta^{\frac12} \right), 
	\]
	whereas, if $\ell > 0$, we have 
	\[
	\frac{\AAb{\eta}^{\frac{\ell-1}{2}}}{\BB_\eta^{\ell/2}} 
	h_{\ell-1}' \left( \frac{\abs{\beta}^2}{ 2\pi \AAb{\eta}^{\frac12} \BB_\eta^{\frac12}}   \right)
	\exp\left( - \frac{2\pi}{\abs{\beta}^2} \AAb{\eta}^{\frac12} \BB_\eta^{\frac12} \right).
	\]
\end{lemma}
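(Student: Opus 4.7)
The core of the argument is to recognize the integral on the left-hand side as the classical integral representation of the modified Bessel function
\[
\int_0^\infty v^{\nu - 1} e^{-Av - B/v}\,dv = 2 \left(\frac{B}{A}\right)^{\nu/2} K_\nu\bigl(2\sqrt{AB}\bigr),
\]
valid for $\Re(A), \Re(B) > 0$ (see Appendix \ref{sec:functions} for the reference used throughout the paper). With the choices $\nu = -s + \tfrac{1}{2} - \ell$, $A = \pi \AAb{\eta}/\abs{\beta}^2$ and $B = \pi \BB_\eta/\abs{\beta}^2$, the exponent $\nu - 1 = -s - \tfrac12 - \ell$ matches the power of $v$ in the integrand, and substitution directly yields the first displayed identity once one notes that $B/A = \BB_\eta/\AAb{\eta}$ and $2\sqrt{AB} = (2\pi/\abs{\beta}^2)\AAb{\eta}^{1/2} \BB_\eta^{1/2}$.

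For the second part, I specialize to $s = 0$, so the index of the Bessel function becomes the half-integer $\tfrac12 - \ell$. The idea is to invoke the standard closed-form evaluation of $K_{n+1/2}(z)$ as $\sqrt{\pi/(2z)} e^{-z}$ times the reverse Bessel polynomial $h_n(1/z)$; this is the relation already being used elsewhere in the paper (cf.\ Remark \ref{rmk:if_nuhalf}). The two cases in the statement correspond to which sign of the index one is in before using the symmetry $K_{-\nu} = K_\nu$: if $\ell \leq 0$ then $\tfrac12 - \ell = \abs{\ell} + \tfrac12 \geq \tfrac12$ is already in the standard form with $n = -\ell = \abs{\ell}$, while if $\ell > 0$ one first applies $K_{1/2 - \ell} = K_{\ell - 1/2}$ and then uses the formula with $n = \ell - 1$.

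The final step is bookkeeping: in both cases, extracting the factor $\sqrt{\pi/(2z)}$ with $z = (2\pi/\abs{\beta}^2) \AAb{\eta}^{1/2}\BB_\eta^{1/2}$ produces a factor $(\AAb{\eta}\BB_\eta)^{-1/4}\abs{\beta}$ (up to numerical constants absorbed into the definition of $h'_k$), which combines with the prefactor $(\AAb{\eta}/\BB_\eta)^{(\ell - 1/2)/2}$ from the first part to give $\AAb{\eta}^{(\ell - 1)/2}\BB_\eta^{-\ell/2}$, as claimed. The exponential $e^{-z}$ then gives the stated Gaussian-in-$\eta$ factor, and the argument $1/z$ of $h'_{\abs{\ell}}$ or $h'_{\ell - 1}$ becomes $\abs{\beta}^2/(2\pi\AAb{\eta}^{1/2}\BB_\eta^{1/2})$.

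The only real potential pitfall is keeping the two sub-cases consistent: the index of $h'$ differs by a shift in the two regimes ($\abs{\ell}$ versus $\ell - 1$) because of the $K_{-\nu}=K_\nu$ reflection, and one must verify that the same polynomial formula $K_{n+1/2}(z) \propto e^{-z} h'_n(1/z)$ is being applied with the correct $n$ in each case. Once this is handled, no further estimates are required and the identity follows directly from the half-integer Bessel evaluation.
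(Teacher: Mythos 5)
Your proof is correct and follows essentially the same route as the paper's: the first identity is the integral representation \eqref{eq:int_bessel} with $a=\pi\AAb{\eta}/\abs{\beta}^2$, $b=\pi\BB_\eta/\abs{\beta}^2$ (the positivity of $\AAb{\eta}$ and $\BB_\eta$ recorded in Notation \ref{not:def_ABC} justifying its use), and the two cases at $s=0$ come from the half-integer evaluation \eqref{eq:bessel_n2} together with $K_{-\nu}=K_\nu$ from \eqref{eq:bessel_12}, with exactly the index bookkeeping you describe. One small remark: no constants actually need to be absorbed into $h'_k$ — the factor $\tfrac12\,r^{-1/2}$ in \eqref{eq:bessel_n2} cancels exactly against the prefactor $\tfrac{2}{\abs{\beta}}(\cdots)$, so the stated normalization comes out on the nose.
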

\begin{proof}
	Recall that $\AAb{\eta}$ and $\BB_\eta$ are both positive (as $\beta$ and  $\beta'$ are both non-zero). 
	Thus, the first equality is immediate from \eqref{eq:int_bessel}, 
	while the second, for $s=0$ follows by \eqref{eq:bessel_n2}. 
	For the third, use $K_{-\nu} = K_\nu$ from \eqref{eq:bessel_12} and argue similarly.
\end{proof}

\begin{lemma}\label{lemma:int_gammabess_part}
	Let $\ell$ be an integer and let $\kappa = p+q -2$. Then, the value at $s=0$ of the integral
	\[
	\frac{1}{\abs{\beta}}\int_0^{\infty} v^{-s+\ell-\frac12 } \Gamma\left( \kappa - 1, 4\pi\abs{n}v\right) e^{+2\pi n v} \exp\left( - \frac{\pi}{\abs{\beta}^2} \left\{ v\AAb{\eta} + \frac1{v} \BB_\eta\right\} \right) dv,
	\]
	is given by
	\begin{multline*}
	\frac{2}{\abs{\beta}}     \kappa!  
	\left( \frac{\AAb{\eta} -  2 n\abs{\beta}^2 + 4n \abs{\beta}^2}{\BB_\eta} \right)^{\frac12 \left(  \ell - \frac12 \right)}  
	\sum_{r=0}^{\kappa}\frac{\left( 4\pi \abs{n}\right)^{r}}{r!} \\
	\; \cdot\, \left(  \frac{\AAb{\eta}  - 2n\abs{\beta}^2 + 4\abs{n}\abs{\beta}^2}{\BB_\eta} \right)^{-\frac{r} {2}}
	K_{\frac12 -\ell + r}\left( \frac{2\pi}{\abs{\beta}^2} \left(\AAb{\eta}  -2n\abs{\beta}^2 + 4\abs{n}\abs{\beta}^2  \right)^{\frac12} \BB_\eta^{\frac12}  \right)
	\\ 
	= \frac{1}{\abs{\beta}}
	\Vint{\kappa + 2, \frac12 - \ell}{\pi\left(\frac{\AAb{\eta}}{\abs{\beta}^2} - 2n \right),\frac{\pi \BB_{\eta}}{\abs{\beta}^2}, 4\pi\abs{n}}. 
	\end{multline*}
\end{lemma}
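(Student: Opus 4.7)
The strategy is to reduce the integral to the standard Hankel–type integral representation of the modified Bessel function $K_\nu$, after first disposing of the incomplete Gamma factor by means of its finite series expansion at positive integer argument.

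First, since $\kappa - 1$ is a non-negative integer (recall $\kappa = p+q-2$ with $p,q\geq 1$, $p+q>2$), I would use the standard identity
\[
\Gamma(N,x)=(N-1)!\,e^{-x}\sum_{r=0}^{N-1}\frac{x^{r}}{r!}
\]
applied with $N=\kappa+1$ (and $x = 4\pi|n|v$) to replace $\Gamma(\kappa-1,4\pi|n|v)$ by a finite sum; this is where the factorial constant in front of the answer will appear. Substituting this expansion and interchanging the (finite) sum with the integral — which is legitimate because each summand decays like $e^{-\mathrm{const}\cdot v}$ as $v\to\infty$ and like $e^{-\mathrm{const}/v}$ as $v\to 0$, using $\BB_\eta>0$ — reduces the problem to evaluating, for each $r$, an integral of the form
\[
\int_0^\infty v^{r-s+\ell-\tfrac12}\,\exp\!\left(-A_n v-\frac{B}{v}\right)dv,
\]
where the combined linear coefficient in the exponent is
\[
A_n\;=\;\frac{\pi}{|\beta|^2}\bigl(\AAb{\eta}-2n|\beta|^2\bigr)+4\pi|n|
\;=\;\frac{\pi}{|\beta|^2}\bigl(\AAb{\eta}-2n|\beta|^2+4|n||\beta|^2\bigr),
\]
obtained by combining the factor $e^{+2\pi n v}$ with the $e^{-4\pi|n|v}$ coming from the Gamma expansion, and $B=\pi\BB_\eta/|\beta|^2$.

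Next, I would invoke the standard Bessel integral \eqref{eq:int_bessel},
\[
\int_0^\infty v^{\mu-1}e^{-av-b/v}\,dv
\;=\;2\left(\frac{b}{a}\right)^{\mu/2}K_\mu\!\bigl(2\sqrt{ab}\bigr),
\]
with $\mu=r-s+\ell+\tfrac12$, $a=A_n$, $b=B$. Setting $s=0$ and using the symmetry $K_\nu=K_{-\nu}$ to rewrite the index in the form $\tfrac12-\ell+r$ appearing in the statement, the result is precisely a finite sum of terms of the shape
\[
\frac{1}{|\beta|}\,\frac{(4\pi|n|)^r}{r!}\,\Bigl(\tfrac{A_n|\beta|^2/\pi}{\BB_\eta}\Bigr)^{\!(\ell-\tfrac12-r)/2}
K_{\tfrac12-\ell+r}\!\left(\tfrac{2\pi}{|\beta|^2}\bigl(\AAb{\eta}-2n|\beta|^2+4|n||\beta|^2\bigr)^{\tfrac12}\BB_\eta^{\tfrac12}\right),
\]
which, once the $\kappa!$ is collected from Step~1, gives the first claimed expression. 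Finally, comparison of this expression with the definition \eqref{eq:def_vint} of $\Vint{N,\mu}{A,B,c}$, with $N=\kappa+2$, $\mu=\tfrac12-\ell$, $A=\pi(\AAb{\eta}/|\beta|^2-2n)$, $B=\pi\BB_\eta/|\beta|^2$, $c=4\pi|n|$, yields the identification with $\Vint{\kappa+2,\tfrac12-\ell}{\cdot,\cdot,\cdot}$.

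\textbf{Main obstacle.} There is no essential difficulty; the only points requiring attention are (i) verifying the interchange of the finite sum with the regularised integral so that taking the constant term at $s=0$ amounts simply to setting $s=0$ (which is valid because each summand is already an entire function of $s$), and (ii) matching the Bessel index via $K_\nu = K_{-\nu}$ to get the form $K_{\tfrac12-\ell+r}$ displayed in the statement, as opposed to $K_{r+\tfrac12+\ell}$ coming out of the direct formula. Beyond that, the proof is purely a bookkeeping of the constants produced by the factorial from the $\Gamma$-expansion and the powers of $|\beta|^2$ collected in the $(A_n/B)^{\mu/2}$ factor.
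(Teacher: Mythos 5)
Your overall route is exactly the paper's: expand the incomplete Gamma function by \eqref{eq:gamma_sum}, absorb $e^{2\pi n v}e^{-4\pi\abs{n}v}$ into the linear coefficient of the exponent (this is the ``shift of $\AAb{\eta}$ by $-2n\abs{\beta}^2+4\abs{n}\abs{\beta}^2$''), apply the Bessel integral \eqref{eq:int_bessel} term by term, and identify the result with $\Vint{\kappa+2,\frac12-\ell}{\cdot,\cdot,\cdot}$; the paper packages these steps as Lemma \ref{lemma:bessel_parts} combined with Lemma \ref{lemma:int_besselgamma}. Your convergence remarks justifying that $\CT_{s=0}$ amounts to setting $s=0$ are fine, and your reading of the Gamma factor as $\Gamma(\kappa+1,4\pi\abs{n}v)=\Gamma(1-k,4\pi\abs{n}v)$ is the correct one — the ``$\kappa-1$'' in the displayed integrand is a typo, as the stated answer $\kappa!\sum_{r=0}^{\kappa}$ already shows.

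The one step that fails as written is your item (ii). From your own $\mu=r+\ell+\tfrac12$ the symmetry $K_\nu=K_{-\nu}$ gives $K_{r+\ell+\frac12}=K_{-r-\ell-\frac12}$, which is \emph{not} $K_{\frac12-\ell+r}$: the two indices differ by $2\ell$, and no Bessel identity bridges them. Likewise the prefactor exponent $(\ell-\tfrac12-r)/2$ in your displayed formula does not follow from $\mu=r+\ell+\tfrac12$, which yields $(A_n/B)^{-\mu/2}=(A_n/B)^{(-\ell-\frac12-r)/2}$. What is actually happening is that the middle display of the Lemma carries a sign error in $\ell$: it is written for the exponent $v^{-s-\ell-\frac12}$ of Lemma \ref{lemma:bessel_parts}, whereas the displayed integrand — and the final expression $\Vint{\kappa+2,\frac12-\ell}{\cdots}$, whose expansion by Lemma \ref{lemma:int_besselgamma} has index $K_{r+1-\mu}=K_{r+\frac12+\ell}$ and exponent $\frac{\mu-r-1}{2}=\frac{-\frac12-\ell-r}{2}$ — uses $v^{-s+\ell-\frac12}$. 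Your computation of the integral as literally stated is therefore correct and already agrees with the $\Vint{\kappa+2,\frac12-\ell}{\cdot,\cdot,\cdot}$ form; the honest conclusion is that the intermediate display requires $\ell\mapsto-\ell$ (equivalently, is stated in the convention of Lemma \ref{lemma:bessel_parts}), not that $K_\nu=K_{-\nu}$ effects the change of index.
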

\begin{proof}
	Follows from Lemma \ref{lemma:bessel_parts} with $\AAb{\eta}$ shifted by $-2n\abs{\beta}^2$ and Lemma \ref{lemma:int_besselgamma}, which in turn is a consequence of the finite series expansion of the incomplete Gamma function in \eqref{eq:gamma_sum}.
\end{proof}

\subsubsection[Holomorphic terms]{The contribution of the holomorphic terms}
Let us now calculate $\phi^{\underline{\gamma}, \underline{\delta}}_2(n,\eta)^+$,
the contribution due to the lift of the holomorphic part of the weak harmonic Maass form $f $,
with Fourier expansion $f^+ = \sum_{n \gg -\infty }\hat{a}^+(n) q^n$. 
We have
\[
\begin{multlined}
\phi^{\underline{\gamma}, \underline{\delta}}_2(m,\eta)^+= 
\left(-i  \sqrt{\pi} \right)^{n_\gamma + n_\delta}  
\sum_{\ell = 0}^{r_\gamma + r_\delta}  2^{\frac{\ell}{2}}
P_{\tilde\gamma, \tilde\delta, \ell} (x_{0, +}) \\
\cdot
\CT_{s=0}\int_{0}^{\infty} v^{\frac{ \ell - n_\gamma - n_\delta}{2} -s-2} 
e^{-2\pi \left(v \hlf{x_{0,+}}{x_{0,+}} + \hlf{x_{0,-}}{x_{0,-}}\right)}   e^{2\pi m v} 
\int_{-\infty}^{\infty} \left( \beta' + \bar\tau\beta \right)^{n_\delta} \left(\bar\beta' + \bar\tau\bar\beta \right)^{n_\gamma}  \\
\cdot
\exp\left( -\frac{2\pi}{v}\left( \abs{\beta'}^2 + \abs{\bar\tau}^2\abs{\beta}^2 \right)- 2\pi \Im\left( \beta'\bar\beta \right) + 2\pi i u \hlf{x_0}{x_0} \right)  e^{2\pi  i n u}du\, dv.
\end{multlined}
\]
After using Lemmas \ref{lemma:FT_exp_part} and \ref{lemma:FT_phik1k2} to evaluate the inner integral, the  
integrand of the  outer integral is given by
\[
\frac{2^{-\frac{1}{2} }}{\abs{\beta}}  v^{\frac12 \left( \ell - n_\gamma  - n_\delta\right) -s - \frac{3}{2}}  \tilde{p}_\eta(v,n)  
\exp\left(-\frac{v\pi }{\abs{\beta}^2} \AAb{\eta} - \frac{\pi}{v\abs{\beta}^2} \BB_\eta\right)
e^{-2\pi i \CC_\eta\left( \hlf{x_0}{x_0} + n\right)}, 
\]
with $\tilde{p}_\eta(v,n)$ the polynomial from Lemma \ref{lemma:FT_phik1k2}, with $k_1 = n_\delta$ and $k_2 = n_\gamma$.
Evaluating with Lemma \ref{lemma:bessel_parts}, and using \eqref{eq:bessel_n2}, we get the following result:
\begin{lemma}\label{lemma:pq_phi2h} 
	For fixed $n$ and $\eta$ and at the base point $z_0$, the rank two term $\phi^{\underline{\gamma}, \underline{\delta}}_2(n,\eta)^+$ is given by
	\begin{equation} \label{eq:pq_phi2_hol}
	\begin{aligned} \phi^{\underline{\gamma}, \underline{\delta}}_2& (n,\eta)^+  =  
	\left( -i \sqrt{\pi} \right)^{n_\gamma  + n_\delta} 
	\sum_{\ell = 0}^{2q - 2 - n_\gamma - n_\delta} 2^{\frac{\ell + 1}{2}} 
	P_{\tilde\gamma, \tilde\delta, \ell} (x_{0, +})   
	\sum_{M=0}^{n_\gamma + n_\delta}  R_{n_\delta, n_\gamma}(\eta,M)
	\\ & \;\cdot\, \sum_{j=0}^{\left\lfloor \frac{M}{2} \right\rfloor} \frac{1}{\pi^j} \sum_{h = 0}^{M -2j}
	\frac{i^h}{2^{h + 3j}}  
	\AAm{\eta}^h \left( - \CC_\eta\right)^{M - 2j -h}  \abs{\beta}^{- 2h - 2 j}
	\binom{M -2 j}{h} \frac{M!}{j! (M-2j)!} 
	\\
	& \qquad \qquad \cdot
	\left( \frac{\AAb{\eta}}{\BB_\eta}\right)^{-\frac{\nu}{2}}
	K_\nu\left( \frac{2\pi}{\abs{\beta}^2} \left(\AAb{\eta}\BB_\eta\right)^{\frac12}\right) 
	\exp\left( -2\pi i \CC_\eta\left(\hlf{x_0}{x_0}  + n\right) \right) ,
	\end{aligned}
	\end{equation}
	where $\nu = h + j  + \frac12\left( \ell - n_\gamma - n_\delta\right) - \frac12$.  
	Furthermore, if $\nu \equiv \frac12 \pmod{1}$, the K-Bessel functions in the last line can be replaced by
	\[
	\frac12 \abs{\beta} \left( \AAb{\eta}\BB_\eta\right)^{-\frac14}
	h_{\abs{\nu} -\frac12} \left( \left( 2\pi\left( \AAb{\eta}\BB_\eta\right)^{\frac12} \right) ^{-1}\right) \exp\left( -  \frac{2\pi}{\abs{\beta}^2} \left( \AAb{\eta}\BB_\eta\right)^{\frac12}\right),
	\]
	where $h_n$ denotes the nth Bessel polynomial. 
\end{lemma}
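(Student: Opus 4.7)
The plan is to assemble the result by combining the explicit form of $\widehat\psipq$ in the mixed model (Proposition \ref{prop:psipq_mixed}), the Fourier-transform Lemmas \ref{lemma:FT_exp_part} and \ref{lemma:FT_phik1k2} for the $u$-integral, and the Bessel integral Lemma \ref{lemma:bessel_parts} for the remaining $v$-integral. Since $\eta$ is of rank $2$, the stabilizer $\Gamma_\eta$ is trivial, so the regularized integral is simply taken over the full upper half plane, and the $\CT_{s=0}$ prescription will be applied at the very end to the resulting expression in Bessel functions (which is already holomorphic at $s=0$, making the constant term procedure trivial).

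First I would substitute the expression for $\widehat\psipq(\sqrt{2}(\eta,x_0),\tau)$ from Proposition \ref{prop:psipq_mixed} (with the $\sqrt{2}$-rescaling taken into account) into the defining integral $\phi_2^{\underline\gamma,\underline\delta}(n,\eta)^+ = \int_{\mathbb{H}}^{reg} e^{2\pi i n u}\, \theta_\eta^{\underline\gamma,\underline\delta}(\tau,z_0)\, v^{-s-2}\,du\,dv$, and separate the $u$- and $v$-dependencies. The factor carrying the $u$-dependence is an exponential of the form used in Lemma \ref{lemma:FT_exp_part}, multiplied by the polynomial $(\beta'+\bar\tau\beta)^{n_\delta}(\bar\beta'+\bar\tau\bar\beta)^{n_\gamma}$; Lemma \ref{lemma:FT_phik1k2} (applied with $k_1=n_\delta$, $k_2=n_\gamma$) then produces the polynomial $\tilde p_\eta(v,n)$ whose coefficients are exactly the sum over $M,j,h$ with the factors $R_{n_\delta,n_\gamma}(\eta,M)$, $\AAm{\eta}^h$, $(-\CC_\eta)^{M-2j-h}$ and the stated combinatorial prefactors. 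The $n$-dependence collapses into the exponential $e^{+2\pi n v}$, which cancels against the $e^{-2\pi n v}$ already present, leaving the clean expression $\exp\bigl(-\frac{v\pi}{|\beta|^2}\AAb{\eta}-\frac{\pi}{v|\beta|^2}\BB_\eta\bigr)\cdot e(-\CC_\eta(\hlf{x_0}{x_0}+n))$ together with the factor $v^{1/2}/(\sqrt{2}|\beta|)$.

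At this point the remaining $v$-integral is of the form $\int_0^\infty v^{-s-1/2-\ell'}\exp(-\frac{v\pi}{|\beta|^2}\AAb{\eta}-\frac{\pi}{v|\beta|^2}\BB_\eta)\,dv$ for an appropriate integer $\ell'$ depending on the exponents $\ell, n_\gamma, n_\delta, j, h$ collected so far (the total $v$-power being $\tfrac12(\ell - n_\gamma - n_\delta)-s-\tfrac{3}{2}+j+h$). Applying Lemma \ref{lemma:bessel_parts} directly yields a $K$-Bessel function with index $\nu = h + j + \tfrac12(\ell - n_\gamma - n_\delta)-\tfrac12$, together with the stated ratio $(\AAb{\eta}/\BB_\eta)^{-\nu/2}$; the resulting expression is entire at $s=0$ since $\BB_\eta>0$ (rank $2$ ensures $\beta\neq 0$ and $\Im(\beta'\bar\beta)\neq 0$), so the $\CT_{s=0}$ just evaluates at $s=0$. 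Collecting the numerical prefactor $2^{\ell/2}\cdot\sqrt{2}=2^{(\ell+1)/2}$ and the polynomial $P_{\tilde\gamma,\tilde\delta,\ell}(x_{0,+})$ gives exactly \eqref{eq:pq_phi2_hol}.

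The half-integer case ($\nu\equiv\tfrac12\bmod 1$) is then an immediate application of the identity \eqref{eq:bessel_12} together with the Bessel polynomial expansion from Appendix \ref{sec:functions}, using $K_{-\nu}=K_\nu$ so that the index $|\nu|-\tfrac12$ makes sense in both signs; this just substitutes the elementary closed form into the Bessel factor of \eqref{eq:pq_phi2_hol}. The main obstacle is purely bookkeeping: keeping track of the exponents of $v$ produced by the polynomial $\tilde p_\eta(v,n)$ (each monomial $v^{\kappa+j}$ shifting the effective index of Lemma \ref{lemma:bessel_parts}) and matching them with the compensating powers of $|\beta|$ in the stated formula; once this is done, no analytic difficulty remains.
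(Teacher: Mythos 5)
Your proposal is correct and follows essentially the same route as the paper: substitute the mixed-model kernel from Proposition \ref{prop:psipq_mixed}, evaluate the $u$-integral via Lemmas \ref{lemma:FT_exp_part} and \ref{lemma:FT_phik1k2} (with $k_1=n_\delta$, $k_2=n_\gamma$), then apply Lemma \ref{lemma:bessel_parts} monomial-by-monomial in $\tilde p_\eta(v,n)$ to get the $K$-Bessel factors with index $\nu = h+j+\tfrac12(\ell-n_\gamma-n_\delta)-\tfrac12$, and use \eqref{eq:bessel_n2} for the half-integer case. The bookkeeping you flag (the $v^{h+j}$ shifts, the cancellation of $e^{\pm 2\pi nv}$, and the prefactor $2^{\ell/2}\cdot\sqrt{2}=2^{(\ell+1)/2}$) matches the paper's computation exactly.
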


\subsubsection[Non-holomorphic terms]{The contribution of the non-holomorphic part}  \label{par:pq_nhol}

Now, we calculate $\phi^{\underline\gamma, \underline\delta}_2(n, \eta)^{-}$, the contribution
due to the non-holomorphic part $f^-$ of a weak harmonic Maass form, with a Fourier expansion of the form
\[
f^- (\tau) = 
\hat{a}^{-}(0) v^{1- k} + 
\sum_{\substack{n\in\Q \\ n\neq 0}} \hat{a}^{-}(n)\Gamma\left(1 - k, 4\pi \abs{n}v \right) e^{2\pi i n u}.
\]
Let us briefly examine the contribution due to the constant term (i.e.\ $n=0$). 
Using Lemma \ref{lemma:FT_mzero} to evaluate the inner integral over u, we get 
\[
\frac{2^{-\frac{1}{2}}}{\abs{\beta}} 
v^{-\frac12 - k + \frac12(\ell - n_\gamma - n_\delta) -s}\tilde{p}_\eta(v, 0)
\exp\left(-\frac{\pi}{\abs{\beta^2}} \left( v \AAb{\eta}(0) - \frac{1}{v}\BB_\eta  \right)\right)
e^{-2\pi i \CC_\eta\hlf{x_0}{x_0}}
\]
as the integrand of the integral over $v$, which can be evaluated exactly like for the holomorphic terms, with $n=0$ throughout and index shifted by $-(k-1)$.

\subparagraph{Terms with $n\neq 0$:} 
The argumentation is similar, as previously for the holomorphic part. The inner integral,
over $u$, is evaluated exactly as before. The integrand for the integral over $v$ takes the form
\[
\frac{2^{-\frac{ 1}{2} }}{\abs{\beta}}  v^{\frac12 \left( \ell - n_\gamma  - n_\delta\right) -s - \frac{3}{2}}  \tilde{p}_\eta(v,n)  \Gamma\left(k + 1,  4\pi \abs{n}v\right)
e^{2\pi n v}
e^{-\frac{v\pi }{\abs{\beta}^2} \AAb{\eta} - \frac{\pi}{v\abs{\beta}^2} \BB_\eta}
e^{-2\pi i \CC_\eta\left( \hlf{x_0}{x_0} + n\right)},
\]
with $\tilde{p}_\eta(v,n)$ from Lemma \ref{lemma:FT_phik1k2}.
The integral is now evaluated using Lemma \ref{lemma:int_gammabess_part}, yielding
\begin{lemma}\label{lemma:pq_phi2nh}
	Let $\kappa = p+q-2$.  For fixed $n$ and $\eta$ and at the base point $z_0$, 
	the contribution of the  rank two orbit to the lift of the non-holomorphic part $f^-$ of $f$
	is given by
	\begin{equation} \label{eq:pq_phi2_nhol}
	\begin{aligned} \phi^{\underline{\gamma}, \underline{\delta}}_2& (m,\eta)^{-}  = 
	\left( -i \sqrt{\pi} \right)^{n_\gamma  + n_\delta} 
	\sum_{\ell = 0}^{2q - 2 - n_\gamma - n_\delta} 2^{\frac{\ell + 1}{2}} 
	P_{\tilde\gamma, \tilde\delta, \ell} (x_{0, +})   
	\sum_{M=0}^{n_\gamma + n_\delta} R_{n_\delta, n_\gamma}(\eta,M)
	\\ & \;\cdot\, \sum_{j=0}^{\left\lfloor \frac{M}{2} \right\rfloor} \frac{1}{\pi^j}
	\sum_{h = 0}^{M -2j} \frac{i^h}{2^{h + 3j}}
	\AAm{\eta}^h \left( - \CC_\eta\right)^{M - 2j -h}  \abs{\beta}^{- 2h - 2 j}
	\binom{M -2 j}{h} \frac{M!}{j! (M-2j)!} 
	\\
	& \cdot \Vint{\kappa +2, \frac{3}{2} - \nu}{\pi\left( \frac{\AAb{\eta}}{\abs{\beta}^{2}}  - 2n\right), \frac{\pi\BB_\eta}{ \abs{\beta}^{2}}, 4\pi\abs{n}}  
	\exp\left( -2\pi i \CC_\eta\left(\hlf{x_0}{x_0}  + n\right) \right).
	\end{aligned}
	\end{equation}       
	where $\nu = h + j  + \frac12\left( \ell - n_\gamma - n_\delta\right) - \frac12$.
\end{lemma}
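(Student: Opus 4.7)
The plan is to mirror the argument used for the holomorphic contribution in Lemma \ref{lemma:pq_phi2h}, with the extra incomplete Gamma factor $\Gamma(1-k, 4\pi|n|v)$ carried through the outer integral and absorbed into the special function $\mathcal{V}_{N,\mu}$ defined in \eqref{eq:def_vint}.

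First, I would write out $\phi_2^{\underline{\gamma},\underline{\delta}}(n,\eta)^{-}$ explicitly by inserting the mixed-model expression for $\widehat{\psi}_{p,q}$ from Proposition \ref{prop:psipq_mixed}, evaluated at the base point $z_0$. Since $\eta$ is non-singular, $\Gamma_\eta$ is trivial, so the domain of integration unfolds to the whole upper half plane and the integral splits into an inner integral over $u\in\R$ and an outer integral over $v\in\R_{>0}$. The $v$-dependence of the integrand outside the Gaussian-type exponential is $v^{\frac12(\ell - n_\gamma - n_\delta) - s - 2}$, and there is an additional factor $\Gamma(1-k, 4\pi|n|v)\,e^{2\pi i n u}$ coming from the Fourier expansion of $f^-$.

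Second, I would compute the inner Fourier integral in $u$. This is exactly the computation carried out in Lemma \ref{lemma:FT_phik1k2} (with $k_1 = n_\delta$, $k_2 = n_\gamma$), since the $\Gamma(1-k, 4\pi|n|v)$ factor is independent of $u$ and only the shift $e^{2\pi i n u}$ interacts with the Fourier transform. This yields the polynomial $\tilde{p}_\eta(v,n)$ explicitly given in that lemma, together with the factor $\frac{v^{1/2}}{\sqrt{2}|\beta|}\exp\bigl(-\tfrac{v\pi}{|\beta|^2}\AAb{\eta} - \tfrac{\pi}{v|\beta|^2}\BB_\eta\bigr)\, e(-\CC_\eta(\hlf{x_0}{x_0}+n))$ and an overall $e^{2\pi n v}$.

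Third, I would combine the residual $v$-factors. The integrand of the outer $v$-integral becomes
\[
\frac{2^{-1/2}}{|\beta|}\, v^{\frac12(\ell - n_\gamma - n_\delta) - s - \frac32}\, \tilde{p}_\eta(v,n)\, \Gamma(1-k, 4\pi|n|v)\, e^{2\pi n v} \exp\Bigl(-\tfrac{v\pi}{|\beta|^2}\AAb{\eta} - \tfrac{\pi}{v|\beta|^2}\BB_\eta\Bigr),
\]
multiplied by the same exponential phase in $\CC_\eta$ as above. Because $\tilde{p}_\eta(v,n)$ is a polynomial in $v$, the $v$-integral decomposes into a finite linear combination of integrals of the shape treated in Lemma \ref{lemma:int_gammabess_part}. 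Concretely, each monomial $v^{h+j}$ in $\tilde{p}_\eta(v,n)$ shifts the exponent of $v$ by $h+j$, and the resulting integral is precisely $|\beta|^{-1}\mathcal{V}_{\kappa+2, \mu}$ with $\mu = \tfrac32 - \nu$, $\nu = h + j + \tfrac12(\ell - n_\gamma - n_\delta) - \tfrac12$, once we take the constant term at $s=0$. The shift $\AAb{\eta} \mapsto \AAb{\eta} - 2n|\beta|^2$ in the argument of $\mathcal{V}$ comes from absorbing $e^{2\pi n v}$ into the linear exponential.

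Fourth, I would collect the prefactors and verify that the resulting expression matches the statement of the lemma, including the combinatorial coefficients $R_{n_\delta, n_\gamma}(\eta, M)$ and the overall factor $(-i\sqrt{\pi})^{n_\gamma + n_\delta} 2^{(\ell+1)/2}$. The main obstacle is purely bookkeeping: matching the exponents of $|\beta|$, the powers of $2$ and $\pi$, the sign in $(-\CC_\eta)^{M-2j-h}$, and the index $\nu$ so that the final expression fits the definition \eqref{eq:def_vint}. I would check the $n=0$ case separately using Lemma \ref{lemma:FT_mzero}; as observed in the paragraph preceding the statement, the computation then runs in parallel with the holomorphic case but with the index shifted by $-(k-1)$, which is consistent with the convention announced in Theorem \ref{thm:Phipq_atz0} and hence requires no independent treatment inside the proof of this lemma.
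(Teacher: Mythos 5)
Your proposal is correct and follows essentially the same route as the paper: the inner $u$-integral is computed exactly as in the holomorphic case via Lemmas \ref{lemma:FT_exp_part} and \ref{lemma:FT_phik1k2} (the incomplete Gamma factor being independent of $u$), and the outer $v$-integral, after absorbing $e^{2\pi nv}$ into the linear exponential and distributing over the monomials of $\tilde{p}_\eta(v,n)$, is evaluated term by term with Lemma \ref{lemma:int_gammabess_part}, producing $\mathcal{V}_{\kappa+2,\,\frac32-\nu}$ with the stated index $\nu$. Your separate treatment of $n=0$ via Lemma \ref{lemma:FT_mzero} and the index shift by $-(k-1)$ also matches the paper's handling.
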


We recall that by the definition of the special function $\Vint{n,\nu}{A,B,c}$ we have 
\begin{equation*} 
\begin{multlined}
\Vint{\kappa +2, \frac{3}{2} - \nu}{\pi\left( \frac{\AAb{\eta}}{\abs{\beta}^{2}}  - 2n\right), \frac{\pi\BB_\eta}{ \abs{\beta}^{2}}, 4\pi\abs{n}}  =  
\kappa!
\sum_{r=0}^{\kappa}
\frac{\left(4\pi\abs{n}\right)^r}{r!}  \\ \cdot
\left( \frac{\AAb{\eta} + 4\abs{n}\abs{\beta}^2 - 2 n\abs{\beta}^2}
{\BB_\eta}\right)^{-\frac{\nu}{2} - \frac{r}{2}} 
K_{r+\nu}\left( \frac{2\pi}{\abs{\beta}^2} 
\left(\left(\AAb{\eta} + 4\abs{n}\abs{\beta}^2  - 2n \abs{\beta}^2\right) \BB_\eta\right)^{\frac12}\right). 
\end{multlined}
\end{equation*}
Further, in  every term where $\nu + r$ is a half-integer, one can set  $\nu' = \abs{r + \nu} - \frac12$
and replace $K_{r + \nu}$ with
\[
\begin{gathered}
\frac12 \abs{\beta}
\left( \left(\AAb{\eta} + 4\abs{n}\abs{\beta}^2  - 2n \abs{\beta}^2\right)\BB_\eta\right)^{-\frac14} 
h_{\nu'} \left( \left( 2\pi\left(
\left(\AAb{\eta} + 4\abs{n}\abs{\beta}^2  - 2n \abs{\beta}^2\right)\BB_\eta\right)^{\frac12} \right) ^{-1}
\right) \\ \cdot\,
\exp\left(- \frac{2\pi}{\abs{\beta}^2} 
\left( \left(\AAb{\eta} + 4\abs{n}\abs{\beta}^2  - 2n \abs{\beta}^2\right) \BB_\eta\right)^{\frac12}\right).
\end{gathered}
\]

\subsection{Rank one terms} \label{subsec:unfold1}
Now, let consider the case where $\eta = [\beta, \beta']$ is of rank 1. 
Recall from Lemma \ref{lemma:Ks_reprs} that a set of representatives for the orbit under $\SL_2(\Z)$
operation is given by
\[
[ 0, \beta'] = \begin{pmatrix} 0 & a \\ 0 & b\end{pmatrix}, \quad \text{with either $\alpha>0$ or $a=0$ and $b>0$.} 
\]
Further, the stabilizer in this case is $\SL_2(\Z)_\infty$. So the domain of integration is given by
\[
\Gamma'_\infty \backslash \Gamma' = \left\{ u + iv\,;\; 0 < u < 1, 0 < v < \infty \right\}.
\]
Again, we begin with the contribution of the holomorphic part. 

\subsubsection{Holomorphic part} 
For a fixed pair of multi-indices $\underline{\gamma}$, $\underline{\delta}$, and fixed $\eta = [0,\beta']$,
the rank one term $\phi_{1}^{\underline\gamma, \underline\delta}(n,\eta)^{+}$ is  given by the integral 
\[
\begin{multlined}
\phi_{1}^{\underline\gamma, \underline\delta}(n,\eta)^{+}
= (-i)^{n_\gamma + n_\delta} {\pi}^{\frac{n_\gamma + n_\delta}{2}} \beta'^{n_\delta}\bar\beta'^{n_\gamma}
\sum_{\ell = 0}^{2q -2 - n_\delta -n_\gamma}2^{\frac{\ell}{2}} P_{\tilde\gamma, \tilde\delta,\ell} (x_{0,+}) \\ 
\cdot
\CT_{s=0} \int_{\R_{>0}} \int_{0}^1 v^{\frac12( \ell - n_\gamma - n_\delta) - 2 -s}
e^{-\frac{2\pi}{v} \abs{\beta'}^2} e^{2\pi i u\hlf{x_0}{x_0} -
	2\pi v\left(  \Qf{x_{0, +}} -  \Qf{x_{0,-}} \right)} e^{2\pi i n \tau} du\, dv.
\end{multlined}
\]
The integral over $u$ just picks out the constant term. 
Hence, $n = - \hlf{x_0}{x_0}$ for all non-vanishing contributions. 
Now, $ \hlf{x_{0,-}}{x_{0,-}}  -  \hlf{x_{0,+}}{x_{0,+}} - n = 2\hlf{x_{0,-}}{x_{0,-}}$. Since the norm of $x_{0,-}$ is negative, the  integrals over $v$ take the form
\begin{equation}\label{eq:rk1_pq_intv}
\CT_{s=0} \int_{0}^\infty v^{\frac12 ( \ell - n_\gamma - n_\delta) - 2 -s}
\exp\left( -4\pi v \abs{\hlf{x_{0,-}}{x_{0,-}} } - \frac{2\pi}{v} \abs{\beta'}^2\right) dv.
\end{equation}
If $\Qf{x_{0,-}} \neq 0$, by the integral representation of the Bessel-functions \eqref{eq:int_bessel},
setting $\nu = \frac12( \ell - n_\gamma - n_\delta) - 1$  and evaluating at $s=0$, one obtains
\[
2 \abs{\beta'}^\nu \left( 2\abs{\hlf{x_{0,-}}{x_{0,-}} } \right)^{-\frac{\nu}{2}}
K_{\nu}\left( 2\pi \abs{\beta'} \abs{ \sqrt{2\hlf{x_{0,-}}{x_{0,-}}} } \right).
\]
Further, if $\nu$ is a half-integer, by \eqref{eq:bessel_n2},  this equals
\[
\abs{\beta'}^{\nu-\frac12} 
\left(2\abs{ \hlf{x_{0,-}}{x_{0,-}} } \right)^{-\frac{\nu}{2}-\frac14}
h_{\abs{\nu}-\frac12} \left(\left( 2\sqrt{2}\pi \abs{\beta'}
\abs{{\hlf{x_{0,-}}{x_{0,-}}} }^{\frac12}\right)^{-1}\right)
e^{-2\sqrt{2} \pi  \abs{\beta'}\abs{ \hlf{x_{0,-}}{x_{0,-}} }^{\frac12} },
\]
with the Bessel polynomial $h_{\nu'}$, $\nu' = \abs{\nu}-\frac12$. 
If $\Qf{x_{0, -}} = 0$ the integral \eqref{eq:rk1_pq_intv} can be evaluated using the integral representation of $\Gamma$-functions, see Example \ref{ex:sig_p1}.
\begin{lemma}\label{lemma:pq_phi1h}
	For fixed $n$ and fixed $\eta = \left[0, \beta' \right]$,
	at the base point $z_0$, and assuming that $\Qf{x_{0,-}} \neq 0$, the rank  one term is given by
	\begin{multline*}
	\phi_{1}^{\underline\gamma, \underline\delta}   (n,\eta)^{+}(z_0)  = 
	\left(-i \sqrt{\pi}\right)^{n_\gamma + n_\delta} \beta'^{n\gamma}\bar\beta'^{n_\delta}
	\sum_{\ell = 0}^{2q -2 -n_\gamma  - n_\delta} 2^{\frac{\ell}{2}}
	P_{\tilde\gamma, \tilde\delta, \ell}(x_{0,+})   
	\abs{\beta'}^{\nu} 2^{\frac{\nu}{2}} \abs{\hlf{x_{0,-}}{x_{0,-}} }^{\frac{\nu}{2}} \\
	\cdot    
	\begin{cases} 
	2 K_\nu\left( 2\sqrt{2} \pi \abs{\beta'} \abs{\hlf{x_{0,-}}{x_{0,-}} }^{\frac12} \right), \\
	\qquad \text{if}\;  \nu = \tfrac12\left( \ell -  n_\gamma - n_\delta\right) -1 \equiv 0 \pmod{1} \\
	\left({2\hlf{x_{0,-}}{x_{0,-}} } \abs{\beta'}^2\right)^{-\frac{1}{4}} 
	h_{\nu'}\biggl(   \frac{1}{2\pi\abs{\beta'}} 
	\abs{2\hlf{x_{0,-}}{x_{0,-}}}^{- \frac12} \biggr)
	\exp\left( - 2 \sqrt{2} \abs{\beta'}\pi  \abs{\hlf{x_{0,-}}{x_{0,-}}}^{\frac12} \right) \\
	\qquad \text{with $\nu' = \abs{\nu}-\tfrac12$, if $\nu \equiv \frac12 \pmod{1}$ .}
	\end{cases}
	\end{multline*}
	Note that $\nu$ ranges from $- \frac{n_\gamma + n_\delta}{2} -1$ to 
	$q - 2 - \frac{n_\gamma + n_\delta}{2}$. 
\end{lemma}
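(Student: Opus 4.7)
The plan is to start from the mixed-model expression for $\widehat{\psipq}$ given in Proposition \ref{prop:psipq_mixed}, specialize to $\eta = [0,\beta']$, and recognize the resulting iterated integral as a product of a Fourier projection in $u$ and a K-Bessel integral in $v$. With $\beta = 0$, the polynomial factor in \eqref{eq:psipq_mixed} collapses: only the terms with $n_\delta$ $\beta'$-factors and $n_\gamma$ $\bar\beta'$-factors survive, contributing $\beta'^{n_\delta}\bar\beta'^{n_\gamma}$, while the exponential factor $\widehat{\varphi_0}^{p,q}$ in \eqref{eq:exp_pq_mixed} reduces to $\exp(-\frac{\pi}{v}|\beta'|^2 + 2\pi\bar\tau\hlf{x_{0,-}}{x_{0,-}} + 2\pi\tau\hlf{x_{0,+}}{x_{0,+}})$. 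After extracting the $(-i\sqrt{\pi})^{n_\gamma+n_\delta}$ prefactor and summing the remaining homogeneous polynomial components $P_{\tilde\gamma,\tilde\delta;\ell}(x_{0,+})$ with weight $v^{\ell/2}$, one arrives precisely at the integrand displayed just above the lemma.

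The next step is to carry out the $u$-integration over $[0,1]$. Since $\eta=[0,\beta']$ eliminates all $u$-dependence from the exponential except for the factor $e^{2\pi i u (n + \hlf{x_0}{x_0})}$, the integral over $u \in [0,1]$ acts as a Kronecker delta, enforcing $n = -\hlf{x_0}{x_0}$ on all non-vanishing contributions. Rewriting $\hlf{x_{0,-}}{x_{0,-}} - \hlf{x_{0,+}}{x_{0,+}} - n = 2\hlf{x_{0,-}}{x_{0,-}}$ and observing that $\hlf{x_{0,-}}{x_{0,-}} \leq 0$, the remaining $v$-integral is the regularized integral \eqref{eq:rk1_pq_intv}, in which, under the hypothesis $\Qf{x_{0,-}}\neq 0$, both coefficients in the exponential $-4\pi v|\Qf{x_{0,-}}| - 2\pi v^{-1}|\beta'|^2$ are strictly positive. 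This secures absolute convergence at $s=0$ and removes any pole in $s$, so the regularization $\CT_{s=0}$ simply evaluates the integral at $s=0$.

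The $v$-integral is then an instance of \eqref{eq:int_bessel}, yielding the K-Bessel expression with index $\nu = \tfrac12(\ell - n_\gamma - n_\delta) - 1$ and the weights $|\beta'|^\nu$, $(2|\Qf{x_{0,-}}|)^{-\nu/2}$ as stated. For the half-integer case, I would substitute the classical identity \eqref{eq:bessel_n2} expressing $K_{\nu}$ with $\nu\in \Z+\tfrac12$ in terms of the Bessel polynomial $h_{|\nu|-1/2}$ and an exponential, which gives the second branch of the formula with the correct power counting in $|\beta'|$ and $|\Qf{x_{0,-}}|$.

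The main (modest) obstacle is bookkeeping: carefully tracking the powers of $v$, $2$, and $\pi$ through the reduction so that the final exponents match, and verifying that the two branches (integer vs.\ half-integer $\nu$) both reduce to the same asymptotic form through \eqref{eq:bessel_12}/\eqref{eq:bessel_n2}. The analytic content — interchanging sum and integral, and invoking the K-Bessel integral representation — is standard once $\Qf{x_{0,-}}\neq 0$ guarantees positivity of both exponential coefficients, so the zero-locus case $\Qf{x_{0,-}}=0$ is quite properly relegated to Example \ref{ex:sig_p1} via a $\Gamma$-function evaluation as in \eqref{eq:s_dep_trick}.
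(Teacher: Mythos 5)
Your proposal is correct and follows essentially the same route as the paper: specialize the mixed-model kernel to $\eta=[0,\beta']$, let the $u$-integral over $[0,1]$ enforce $n=-\hlf{x_0}{x_0}$, evaluate the resulting $v$-integral via the $K$-Bessel representation \eqref{eq:int_bessel} at $s=0$ (convergence being guaranteed by $\Qf{x_{0,-}}\neq 0$), and reduce the half-integer case with \eqref{eq:bessel_n2}. The only point to watch is the factor-of-two bookkeeping between $\Qf{x_{0,-}}$ and $\hlf{x_{0,-}}{x_{0,-}}$ in the exponential coefficient, which you already flag as the remaining routine verification.
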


\subsubsection{Non-holomorphic part} 
For $n\neq 0$ the inner integral over $u$ picks out the constant term, similar to the evaluation of the holomorphic part.
Then, the outer integral is of the form
\[
\CT_{s=0}  \int_{0}^\infty v^{\frac12\left( \ell -n_\gamma - n_\delta \right)-2 -s}\Gamma(\kappa+1, 4\pi\abs{n})  e^{-\frac{\pi}{v}\abs{ \beta'}^2 } e^{2\pi v \left( \hlf{x_{0,-}}{x_{0,-}}  - \hlf{x_{0,+}}{x_{0,+}}\right) } dv,
\]
with $\kappa  =  (p+q) - 2$. Hence, using Lemma \ref{lemma:int_gammabess_part},  we get the following.
\begin{lemma}\label{lemma:pq_phi1nh}
	For fixed $n\neq 0$ and fixed $\eta = \left[0, \beta' \right]$, at the base point $z_0$,
	the rank  one term is given by
	\begin{gather*}
	\begin{aligned}
	\phi_{1}^{\underline\gamma, \underline\delta} & (n,\eta)^{-}(z_0)
	= \left(-i \sqrt{\pi}\right)^{n_\gamma + n_\delta} \beta'^{n\gamma}\bar\beta'^{n_\delta}
	\sum_{\ell = 0}^{2q -2 - n_\gamma - n_\delta} 2^{\frac{\ell}{2} + 1} 
	P_{\tilde\gamma, \tilde\delta, \ell}(x_{0,+})  \\
	& \cdot \kappa! \sum_{r=0}^\kappa \frac{\left( 4\pi\abs{n}\right)^r}{r!} 
	\left( \frac{2\hlf{x_{0,-}}{x_{0,-}} + 4\abs{n}}{\abs{\beta'}^2}\right)^{-\frac{\nu+r}{2}} 
	K_{\nu+r}\left( 2\pi \abs{\beta'} \left(2  {\hlf{x_{0,-}}{x_{0,-}}}+ 4\abs{n} \right)^{\frac12} \right) \\
	\end{aligned} \\
	= 
	\left(-i \pi^{\frac12}\right)^{n_\gamma + n_\delta}\!  \beta'^{n\gamma}\bar\beta'^{n_\delta}
	\sum_{\ell = 0}^{\substack{2q -2 \phantom{-n}\\   - n_\gamma - n_\delta}} 
    2^{\frac{\ell}{2}} 
	P_{\tilde\gamma, \tilde\delta, \ell}(x_{0,+})\cdot
	\Vint{\kappa +2, 1 -\nu}{2\pi \hlf{x_{0,-}}{x_{0,-}}, \pi\abs{\beta'}^2, 4\pi \abs{n}},
	\end{gather*}
	where, as usual, $\kappa = p+q -2$ and $\nu = \frac12 (\ell -n_\gamma -n_\delta) -1$.
	Note that if $\nu + r$ is a half-integer, the Bessel functions in the second line can be replaced by
	\[
	\begin{gathered}
	\frac1{2^{\frac{3}{2}}\left(\abs{\beta'}\right)^{\frac12}} 
	\left(2  {\hlf{x_{0,-}}{x_{0,-}}} + 4\abs{n} \right)^{-\frac14} 
	h_{\nu'}\left(\left[  2^{\frac{3}{2}}\pi \abs{\beta'} 
	\sqrt{  {\hlf{x_{0,-}}{x_{0,-}}}+ 2\abs{n} }\right] ^{-1}\right) \\
	\cdot   \exp\left(- 2^{\frac32} \pi \abs{\beta} \sqrt{\hlf{x_{0,-}}{x_{0,-}}  + 2\abs{n}}\right)   
	\end{gathered}
	\]
	wherein $h_{\nu'}$ is the Bessel polynomial of index $\nu' = \abs{\nu + r} - \frac12$.
\end{lemma}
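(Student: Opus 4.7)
The strategy parallels the holomorphic case just settled in Lemma \ref{lemma:pq_phi1h}. Starting from Proposition \ref{prop:psipq_mixed} and substituting $\eta = [0, \beta']$, the vanishing of $\beta$ collapses the twisted polynomial factor $(\beta' + \bar\tau\beta)^{n_\delta}(\bar\beta' + \bar\tau\bar\beta)^{n_\gamma}$ to the pure monomial $\beta'^{n_\delta}\bar\beta'^{n_\gamma}$, while the Gaussian reduces to $\exp(-\pi|\beta'|^2/v)\cdot e\bigl(\bar\tau\hlf{x_{0,-}}{x_{0,-}} + \tau\hlf{x_{0,+}}{x_{0,+}}\bigr)$. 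Since $\Gamma_\eta = \SL_2(\Z)_\infty$, unfolding gives the strip $u \in [0,1]$, $v \in \R_{>0}$, in contrast to the rank-two case where the domain is all of $\Hp$.

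The $u$-integration is trivial: the only $u$-dependence comes from combining $e^{2\pi i u\hlf{x_0}{x_0}}$ in the kernel with the $e^{2\pi i n u}$ from the non-holomorphic Fourier coefficient, so $\int_0^1 e^{2\pi i u(n + \hlf{x_0}{x_0})}\,du$ enforces $n = -\hlf{x_0}{x_0}$ for non-vanishing contributions, exactly as for the holomorphic part; note that here there is no $e^{+2\pi n v}$ factor, because the compact $u$-domain does not produce a full Fourier transform. After extracting the overall prefactor $(-i\sqrt\pi)^{n_\gamma+n_\delta}\beta'^{n_\delta}\bar\beta'^{n_\gamma}\sum_\ell 2^{\ell/2} P_{\tilde\gamma,\tilde\delta,\ell}(x_{0,+})$ and using $\hlf{x_{0,+}}{x_{0,+}} - \hlf{x_{0,-}}{x_{0,-}} = -n - 2\hlf{x_{0,-}}{x_{0,-}}$, the surviving $v$-integral reads
\[
\CT_{s=0}\int_0^\infty v^{\nu - 1 - s}\,\Gamma(1 - k, 4\pi|n|v)\,\exp\Bigl(-2\pi\hlf{x_{0,-}}{x_{0,-}}\,v - \pi|\beta'|^2 v^{-1}\Bigr)\,dv,
\]
with $\nu = \tfrac12(\ell - n_\gamma - n_\delta) - 1$.

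This is exactly the integral encoded by the special function $\Vint{N, \mu}{A, B, c}$ from \eqref{eq:def_vint}, with parameters $N = p + q$, $\mu = 1 - \nu$, $A = 2\pi\hlf{x_{0,-}}{x_{0,-}}$, $B = \pi|\beta'|^2$ and $c = 4\pi|n|$. Invoking Lemma \ref{lemma:int_besselgamma} --- equivalently, expanding $\Gamma(1-k, 4\pi|n|v)$ via its finite series and identifying each term with a $K$-Bessel integral through \eqref{eq:int_bessel} --- produces the closed-form sum of $K_{\nu + r}$'s displayed in the lemma; the half-integer specialization then follows from \eqref{eq:bessel_n2}, which turns $K_{\nu + r}$ into an exponential times a reverse Bessel polynomial $h_{|\nu + r| - 1/2}$, just as in Lemma \ref{lemma:pq_phi1h}. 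The constant term at $s = 0$ is harmless since the integrand is absolutely convergent once the parameters are fixed. The main obstacle is not analytic but combinatorial: carefully tracking the compound prefactor assembled from the various powers of $2$, $\pi$, $|\beta'|$, and the $\sqrt{2}$-normalization inherited from $\theta_\eta$, so that the two equivalent forms of the answer --- the explicit $K$-Bessel sum and the compact $\Vint{\kappa + 2, 1 - \nu}{\cdot,\cdot,\cdot}$ expression --- match term by term.
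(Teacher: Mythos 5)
Your proposal follows the paper's own argument exactly: unfold over $\Gamma_\eta\backslash\Hp = \SL_2(\Z)_\infty\backslash\Hp$, let the $u$-integral over $[0,1]$ enforce $n = -\hlf{x_0}{x_0}$, and identify the remaining $v$-integral with $\Vint{p+q,\,1-\nu}{2\pi\hlf{x_{0,-}}{x_{0,-}}, \pi\abs{\beta'}^2, 4\pi\abs{n}}$, evaluated via the finite expansion \eqref{eq:gamma_sum} of the incomplete Gamma function together with the Bessel integral \eqref{eq:int_bessel} (i.e.\ Lemma \ref{lemma:int_besselgamma}), with \eqref{eq:bessel_n2} handling the half-integer indices. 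This is the same route the paper takes, so the proposal is correct and essentially identical to the printed proof.
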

For $n = 0$ the contribution of the non-holomorphic part 
$\phi_1^{\underline\gamma, \underline\delta}(0,\eta)^{-}$ is similar to the holomorphic part $\phi_1^{\underline\gamma, \underline\delta}(0,\eta)^{+}$ (see Lemma \ref{lemma:pq_phi1h}), but with index shifted by $-k+1$ due to the power of $v$ in the constant term of $f^-$.

\appendix

\section{Special functions and Fourier transforms}\label{sec:functions}
\subsection{Special functions} 
In this section we recall the integral representations of some special functions and their properties.
\paragraph{The incomplete Gamma function}
First, for the convenience of the reader, 
we recall the integral representations of the Gamma function and the incomplete Gamma function.
\begin{equation}\label{eq:int_gamma}
\Gamma(s) = \int_0^\infty t^{s-1} e^{-t} dt, \qquad
\Gamma(s,a) = 
\int_a^\infty t^{s-1} e^{-t} dt.
\end{equation}
For $n \in \mathbb{N}_0$, we note the following identity  \citep[cf.][p.~74]{Br02}:
\begin{equation}\label{eq:gamma_sum}
\Gamma(n+1,  a) = n! e^{-a} e_n(a)  = n! e^{-a}\sum_{r=0}^{n} \frac{a^r}{r!}.
\end{equation}

\paragraph{Relations for the $K$-Bessel functions}
The following integral representation for Bessel functions is well-known to number theorists,
 see \cite[][6.(17),  p. 313]{EMOT54}
\begin{equation}
  \label{eq:int_bessel}
  \int_0^\infty v^{\nu -1} \exp\left(-a v -b v^{-1} \right) dv =
  2 \left(\frac{a}{b} \right)^{-\frac{\nu}{2}}K_\nu\left(2\sqrt{ab}\right)
  \qquad(\Re a>0, \Re b>0).
\end{equation}
Beside this integral representation, we also make frequent use of the following relations \citep[cf.][10.27.3, 10.33.2]{DLMF}:
\begin{gather}
  K_{-\nu} (x)= K_{\nu}(x),  \quad  \label{eq:bessel_12}
  K_{-\frac12}(2\pi r) = K_{\frac12}(2\pi r)= \frac12 r^{-\frac12} e^{-2\pi r}, \\ 
  \label{eq:bessel_n2} \text{and}\qquad
    K_{n + \frac12}(2\pi r) = \frac12 r^{-\frac12} e^{-2\pi r} 
    \besspoly_n\left( \tfrac{1}{2\pi r}\right) \quad (n\in\Z, n\geq 0).  
\end{gather}
Here, $\besspoly_n$ is the n-th Bessel polynomial, explicitly given by
\begin{equation*}
\besspoly_n(x) = \sum_{k=0}^n \frac{(n+k)!}{(n-k)!\, k!}\left( \frac{x}{2} \right)^k.
\end{equation*}

A further special function, which generalises \citep[][(3.25) on p.74]{Br02} is useful for the Fourier-Jacobi expansion of the singular theta lift $\Phi(z,f,\psi_{p,q})$.
\begin{lemma}\label{lemma:int_besselgamma}
  For $n \in \mathbb{Z}$, $n \geq 2$,  $\Re (A+ c)>0$, $\Re B>0$, the special function defined as  
  \[
 \Vint{n, \mu}{A, B, c} \vcentcolon =   \int_0^\infty \Gamma(n - 1, c v) v^{-\mu} e^{-Av - B \frac{1}{v}} dv. 
 \]
is given by
 \[
   2(n-2)!  \sum_{r=0}^{n-2} \frac{c^r}{r!} \left(\frac{A + c}{B}\right)^{\frac{\mu - r -1 }{2}}\!
    K_{r+1-\mu}(2\sqrt{(A+c)B}) .
  \]
  Further, if $\mu \equiv \frac12 \pmod{1}$, we have
  \[
    (n-2)!\pi^{\frac12}  \sum_{r=0}^{n-2}\frac{c^ r}{r!} \left( A  +c\right)^{\frac{1}{2}(\mu -r) - \frac{3}{4}} B^{\frac12(r - \mu) + \frac{1}{4}}  
  e^{-2\sqrt{(A+c)B}} h_{r- \mu + \frac12}\left( \frac{\pi}{2\sqrt{(A+c)B}} \right).  
  \]
   \end{lemma}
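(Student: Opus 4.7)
The plan is to reduce the integral to a finite sum of standard Bessel integrals by expanding the incomplete Gamma function as a finite exponential-times-polynomial via \eqref{eq:gamma_sum}, and then to invoke the standard integral representation \eqref{eq:int_bessel} term by term.

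Concretely, I would first apply \eqref{eq:gamma_sum} with the shift $n \mapsto n-2$, writing
\[
\Gamma(n-1, cv) = (n-2)!\, e^{-cv}\sum_{r=0}^{n-2} \frac{(cv)^r}{r!}.
\]
Substituting this into the defining integral and exchanging sum and integral (which is legitimate since the sum is finite and each resulting integral converges absolutely under the hypotheses $\Re(A+c)>0$, $\Re B>0$), the task reduces to evaluating the family
\[
\int_0^\infty v^{r-\mu}\, e^{-(A+c)v - B/v}\, dv \qquad (0\leq r\leq n-2).
\]
Each of these is precisely of the form covered by \eqref{eq:int_bessel} with parameters $\nu - 1 = r - \mu$ (so $\nu = r+1-\mu$), $a = A+c$, and $b = B$. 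Plugging in gives $2\bigl((A+c)/B\bigr)^{(\mu-r-1)/2} K_{r+1-\mu}\bigl(2\sqrt{(A+c)B}\bigr)$, and summing over $r$ with the prefactors yields the first asserted formula.

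For the half-integer specialization, I would then substitute \eqref{eq:bessel_n2} into each summand. Writing $2\sqrt{(A+c)B} = 2\pi r_0$ with $r_0 = \sqrt{(A+c)B}/\pi$, one has $r_0^{-1/2} = \pi^{1/2}\bigl((A+c)B\bigr)^{-1/4}$ and $1/(2\pi r_0) = 1/\bigl(2\sqrt{(A+c)B}\bigr)$, so that
\[
K_{r+1-\mu}\bigl(2\sqrt{(A+c)B}\bigr) = \tfrac{1}{2}\pi^{1/2}\bigl((A+c)B\bigr)^{-1/4} e^{-2\sqrt{(A+c)B}}\, h_{r-\mu+1/2}\!\left(\tfrac{1}{2\sqrt{(A+c)B}}\right),
\]
and combining $\bigl((A+c)/B\bigr)^{(\mu-r-1)/2}\cdot \bigl((A+c)B\bigr)^{-1/4} = (A+c)^{(\mu-r)/2 - 3/4} B^{(r-\mu)/2 + 1/4}$ produces the claimed closed form.

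No step is really an obstacle — the proof is essentially a direct calculation. The only point requiring mild care is the half-integer case: when $r+1-\mu$ is a \emph{negative} half-integer (which happens for small $r$ if $\mu \geq 3/2$), the identity \eqref{eq:bessel_n2} applies only after invoking $K_{-\nu} = K_\nu$ from \eqref{eq:bessel_12}, so the Bessel polynomial index should be read as $\lvert r-\mu+1/2\rvert$; this is the convention already used in the applications (e.g.\ Lemmas \ref{lemma:pq_phi2nh} and \ref{lemma:pq_phi1nh}).
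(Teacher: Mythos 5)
Your proof follows exactly the paper's own argument: expand $\Gamma(n-1,cv)$ via \eqref{eq:gamma_sum}, integrate term by term with \eqref{eq:int_bessel} (with $a=A+c$, $b=B$, $\nu=r+1-\mu$), and specialize via \eqref{eq:bessel_12} and \eqref{eq:bessel_n2}; your remark about negative half-integer orders is a useful precision the paper leaves implicit. One small point: your computation yields the Bessel polynomial argument $\tfrac{1}{2\sqrt{(A+c)B}}$, which is what \eqref{eq:bessel_n2} actually gives — the extra $\pi$ in the numerator of the lemma's stated argument appears to be a typo in the statement rather than an error on your part.
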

 \begin{proof}
   Since $n-2$ is a non-negative integer, we can use the formula \eqref{eq:gamma_sum},
    and by \eqref{eq:int_bessel},  obtain the following: 
   \[
     \begin{gathered}
        \int_0^\infty \Gamma(n - 1, c v) v^{-\mu} e^{-Av -Bv^{-1}} dv  = \\
        (n-2)! \sum_{r=0}^{n-2} \frac{c^r}{r!} \int_0^\infty v^{r - \mu} e^{-cv-Av - B v^{-1}} dv  = \\
        (n-2)! \sum_{r=0}^{n-2} \frac{c^r}{r!}\cdot 2 \left( \frac{A+c}{B}\right)^{-\frac{r+1-\mu}{2}} K_{r-\mu + 1}(2\sqrt{(A+c)B}).
      \end{gathered}
 \]
      The rest follows directly from \eqref{eq:bessel_12} and \eqref{eq:bessel_n2}. 
  \end{proof}

\paragraph{Special polynomials}\label{par:sp_poly}
We now turn to two families of special polynomials, the Hermite and the Laguerre polynomials. Recall their definitions: 
For $k\geq 0$, the $k$-th Hermite polynomial $H_k$ is given by
\[
  H_k(t) = (-1)^k e^{t^2}\left(\frac{d}{dt} \right)^k e^{-t^2} =
  e^{t^2/2} \left( t - \frac{d}{dt} \right)^k e^{-t^2/2},
\]
while the $k$-th Laguerre polynomial $L_k$ is given by
\[
  L_k(t) = \frac{e^t}{k!}\left( \frac{d}{dt}\right)^k \left( e^{-t} t^k\right).
\]
A slight reformulation using the differentials from Section \ref{sec:psi_to_mm}, will be quite useful.
  \begin{equation}\label{eq:KM_Laguerre}
    \Da_1^k \bar{\Da}_1^k \varphi_0 = \left(\frac{1}{\pi} \right)^k 2^k k! L_k\left(2\pi\abs{z_1}^2 \right)\varphi_0.
  \end{equation}
  Further, since the operators $\Da_1$ and $\bar\Da_1$ commute, one obtains the following equality: 
  \begin{equation}\label{eq:Laguerre_binom_Hermite}
    \begin{gathered}
      \Da_1^k \bar{\Da}_1^k \varphi_0  = \left( \Da_1\bar{\Da}_1\right)^k \varphi_0 
      = \left[ \left( x_1 - \frac{1}{2\pi} \frac{d}{d x_1}\right)^2 + \left(  x_2 - \frac{1}{2\pi} \frac{d}{d x_2}\right)^2 \right] \varphi_0 \\
      = (2\pi)^{-k} \sum_{l=0}^k \binom{k}{l} H_{2(k-l)}\left(\sqrt{2\pi} x_1 \right)H_{2l}\left(\sqrt{2\pi} x_2\right) \varphi_0.
    \end{gathered}
  \end{equation}

\subsection{Fourier transforms}
Now, we gather some formulas for Fourier transforms, which come in useful for the  evaluation of theta integral and for  switching between the Schrödinger model and the mixed model of the Weil representation (cf.\ Section \ref{sec:psi_to_mm}). 

For the following Lemma, see \citep[][Corollary 3.3]{Bo98}. 
\begin{lemma}\label{lemma:FT_Bo}
	Let $x$ be a real indeterminate and $p(x) \in \C[x]$ a polynomial. Then, the Fourier transform of the Schwartz function 
	\[
	p(x)e^{2\pi i \left( Ax^2 + Bx + C\right)}
	\]
	is given by
	\begin{equation}\label{eq:FT_Bo}
	(-2iA)^{-\frac12}\exp\left( \frac{i}{8\pi 
		A}\frac{d^2}{dt^2}\right)(p(t))\left(-\tfrac{\xi}{2A} - \tfrac{B}{2A}\right)
	\exp\left(2\pi i\left[-\tfrac{\xi^2}{4A} - \tfrac{\xi\,B}{2A}  
	-\tfrac{B^2}{4A} + C\right]\right),
	\end{equation}
	wherein $\xi$ denotes the transformed variable. 
\end{lemma}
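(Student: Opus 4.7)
The plan is to reduce the general polynomial case to the pure Gaussian case via the standard trick of differentiating with respect to the parameter $B$, and then interpret the resulting operator identity in terms of the heat semigroup $\exp\!\left(\tfrac{i}{8\pi A}\tfrac{d^{2}}{dt^{2}}\right)$.

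First, I would dispose of the case $p\equiv 1$. Completing the square, $Ax^{2}+Bx+C = A\!\left(x+\tfrac{B}{2A}\right)^{2} -\tfrac{B^{2}}{4A}+C$, and shifting the integration variable, the Fourier transform at $\xi$ reduces to the standard Gaussian integral $\int_{\R}e^{2\pi i A y^{2}}e^{-2\pi i \xi y}\,dy = (-2iA)^{-1/2}e^{-2\pi i\xi^{2}/(4A)}$ (with the branch of the square root fixed so that convergence is understood in the distributional/analytic-continuation sense for $A\in\R\setminus\{0\}$). Reassembling the factors produced by the shift yields precisely the Gaussian factor in \eqref{eq:FT_Bo}, i.e.~the claimed formula when the polynomial reduces to $\exp\!\left(\tfrac{i}{8\pi A}\partial_{t}^{2}\right)(1) = 1$.

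Next, for the polynomial case, I would exploit the observation that multiplication by $x$ on the spatial side can be realised by differentiation in the parameter $B$: indeed $xe^{2\pi i(Ax^{2}+Bx+C)} = \tfrac{1}{2\pi i}\partial_{B}e^{2\pi i(Ax^{2}+Bx+C)}$, and more generally $p(x)e^{2\pi i(Ax^{2}+Bx+C)} = p\!\left(\tfrac{1}{2\pi i}\partial_{B}\right)e^{2\pi i(Ax^{2}+Bx+C)}$. Since the Fourier transform commutes with $\partial_{B}$, it suffices to apply $p\!\left(\tfrac{1}{2\pi i}\partial_{B}\right)$ to the right-hand side of \eqref{eq:FT_Bo} in the $p\equiv 1$ case and to re-express the result in the stated form. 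Writing $t = t(B) := -\xi/(2A) - B/(2A)$, so that $\partial_{B} = -\tfrac{1}{2A}\partial_{t}$, and letting $g(B)$ denote the Gaussian factor $\exp\!\bigl(2\pi i[-\xi^{2}/(4A)-\xi B/(2A)-B^{2}/(4A)+C]\bigr)$, a short computation shows $\tfrac{1}{2\pi i}\partial_{B}\,g = t\cdot g$, so that $\tfrac{1}{2\pi i}\partial_{B}$ acts on the space $\C[t]\cdot g$ like multiplication by $t$ plus the first-order correction $-\tfrac{1}{4\pi iA}\partial_{t} = \tfrac{i}{4\pi A}\partial_{t}$ coming from the Leibniz rule.

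The core step is therefore the operator identity
\[
 p\!\left(\tfrac{1}{2\pi i}\partial_{B}\right)g(B) \;=\; \Bigl[\exp\!\left(\tfrac{i}{8\pi A}\partial_{t}^{2}\right)p\Bigr]\!\bigl(t(B)\bigr)\cdot g(B),
\]
which, combined with the $p\equiv 1$ computation, yields \eqref{eq:FT_Bo}. This identity is linear in $p$ so by polarisation it suffices to check it for the generating polynomials $p(x)=e^{sx}$ viewed formally in $s$: the left-hand side becomes $e^{sB/(2\pi i)}g(B)$ multiplied by a Gaussian-in-$s$ correction arising from completing the square inside $p\!\left(\tfrac{1}{2\pi i}\partial_{B}\right)e^{f(B)}$ with $f$ quadratic, while the right-hand side evaluates, via $\exp\!\left(\tfrac{i}{8\pi A}\partial_{t}^{2}\right)e^{st} = e^{is^{2}/(8\pi A)}e^{st}$, to exactly the same expression. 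Matching coefficients in the Taylor expansion in $s$ (both sides are entire in $s$ and the equality extends to all polynomials by density) completes the proof. The main technical obstacle is bookkeeping with the branch of $(-2iA)^{-1/2}$ and with the sign conventions in the quadratic-form manipulations; the generating-function check avoids any induction and keeps the combinatorics to a minimum.
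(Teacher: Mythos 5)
Your argument is correct, but note that the paper does not actually prove this lemma: it is quoted from Borcherds \cite{Bo98}, Corollary 3.3, with no proof supplied, so your write-up is a genuine addition rather than a variant of an existing argument. The structure is sound: the $p\equiv 1$ case is the standard Fresnel/Gaussian integral after completing the square, and the reduction of the polynomial case to the operator identity $p\bigl(\tfrac{1}{2\pi i}\partial_B\bigr)g = \bigl[\exp\bigl(\tfrac{i}{8\pi A}\partial_t^2\bigr)p\bigr](t(B))\,g$ via $x\,e^{2\pi i(Ax^2+Bx+C)}=\tfrac{1}{2\pi i}\partial_B e^{2\pi i(Ax^2+Bx+C)}$ is legitimate. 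Your generating-function check is the right way to close it and it does verify: with $c=\tfrac{s}{2\pi i}$ one has $g(B+c)=\exp\bigl(st(B)+\tfrac{is^2}{8\pi A}\bigr)g(B)$, which matches $\exp\bigl(\tfrac{i}{8\pi A}\partial_t^2\bigr)e^{st}=e^{is^2/(8\pi A)}e^{st}$ term by term in $s$ (the word ``polarisation'' is not the right name for this step --- it is just linearity plus comparison of Taylor coefficients --- but the substance is fine). Two small points of hygiene. First, the paper's Fourier convention is $\int\phi(x)e^{+2\pi i\xi x}\,dx$ (see the definition of the partial Fourier transform in Section \ref{sec:psi_to_mm} and Lemma \ref{lemma:ft_hermite}); your intermediate display $\int e^{2\pi iAy^2}e^{-2\pi i\xi y}\,dy$ carries the opposite sign, which happens not to matter for that particular integral since its value is even in $\xi$, but with the $e^{-2\pi i\xi x}$ convention the cross term $-\xi B/(2A)$ in \eqref{eq:FT_Bo} would come out with the wrong sign, so you should state the convention explicitly and use it consistently. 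Second, the interchange of $\partial_B$ with the $x$-integral and the analytic continuation fixing the branch of $(-2iA)^{-1/2}$ deserve one explicit sentence (in the paper's applications $A$ has positive imaginary part, e.g.\ $A=i\abs{\beta}^2/v$, so everything converges absolutely and the principal branch is the right one).
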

Now, consider a special case, the Fourier transform of $p(x)e^{-\pi x^2}$. It is given by 
\begin{equation} \label{eq:FT_poly_real}
\exp\left( \frac{1}{4\pi} \frac{d^2}{d t^2}\right) \bigl(p(t)\bigr)(i\xi)e^{-\pi\xi^2}  \vcentcolon 
= \tilde{p}(\xi)e^{-\pi x^{2}}.
\end{equation}
From this, one can immediately conclude the following statements  
\begin{enumerate}
\item The Fourier transform of  $p(x + c)e^{-\pi x^2}$ is given by $\tilde{p}(\xi - ic)e^{-\pi \xi^2}$.
  \item The transform of $p(-x)e^{-\pi x^2}$ is given by $\tilde{p}(\xi)e^{-\pi x^2}$. 
\end{enumerate}

\paragraph{Fourier transform of special polynomials}
The following Lemma from \citep[][Lemma 4.1]{FM13}, describes the Fourier transform of the Hermite polynomials  
\begin{lemma}\label{lemma:ft_hermite}
  The Fourier transform of the $k$-th Hermite polynomial $H_k(x)$ is given by
   \begin{equation}\label{eq:FM_Hk}
    \int_{0}^\infty \frac{1}{\sqrt{2\pi}^k}H_k(-\sqrt{\pi}x)e^{-\pi x^2}\, e^{2\pi i \xi x} dx = 
    \left( - \sqrt{\pi}i\xi\right)^{k} e^{-\pi\xi^2}.
\end{equation}
\end{lemma}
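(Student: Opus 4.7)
\medskip

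The plan is to exploit the generating function of the Hermite polynomials,
\[
e^{2ut-t^2} = \sum_{k\ge 0} \frac{t^k}{k!}\,H_k(u),
\]
in order to reduce the family of integrals (one for each $k$) to a single Gaussian integral depending on the auxiliary parameter $t$, and then to read off the identity by extracting the coefficient of $t^k$ on both sides.

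Concretely, I would substitute $u = -\sqrt{\pi}\,x$ in the generating function, multiply both sides by $e^{-\pi x^2}\,e^{2\pi i\xi x}$, and integrate. Formally this gives
\[
\sum_{k\ge 0} \frac{t^k}{k!}\int_{0}^{\infty} H_k(-\sqrt{\pi}\,x)\,e^{-\pi x^2}\,e^{2\pi i\xi x}\,dx
\;=\; \int_{0}^{\infty} e^{-\pi x^2 - 2\sqrt{\pi}\,xt - t^2 + 2\pi i\xi x}\,dx.
\]
Completing the square in the exponent,
\[
-\pi x^2 - 2\sqrt{\pi}\,xt + 2\pi i\xi x - t^2 = -\pi\!\left(x + \tfrac{t}{\sqrt{\pi}} - i\xi\right)^{\!2} - \pi\xi^2 - 2i\sqrt{\pi}\,t\xi,
\]
reduces the right-hand side to $e^{-\pi\xi^2}\,e^{-2i\sqrt{\pi}\,t\xi}$ times a Gaussian tail integral. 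Expanding $e^{-2i\sqrt{\pi}\,t\xi}$ in powers of $t$ and matching with the Taylor expansion on the left, the coefficient of $t^k/k!$ is, after dividing by $\sqrt{2\pi}^{\,k}$, exactly $(-\sqrt{\pi}\,i\xi)^k e^{-\pi\xi^2}$, which is the claim.

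The main obstacle I anticipate is dealing rigorously with the Gaussian tail integral arising from integration over the half-line $(0,\infty)$: after completing the square one picks up a complementary-error-function factor depending on $t$, and showing that it contributes trivially to the $t^k$-coefficient requires either a contour-deformation argument (shifting the contour by $-t/\sqrt{\pi}+i\xi$ and absorbing the boundary contributions) or an alternative derivation via the Rodrigues formula $H_k(-\sqrt{\pi}\,x)\,e^{-\pi x^2} = \pi^{-k/2}(d/dx)^k e^{-\pi x^2}$ followed by $k$-fold integration by parts against $e^{2\pi i\xi x}$, with careful tracking of boundary terms at $x=0$. A fall-back is simply to appeal to \cite{FM13}, Lemma~4.1, from which the identity is quoted.
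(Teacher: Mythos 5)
The paper offers no proof of this lemma at all: it is quoted verbatim from \citep[Lemma 4.1]{FM13}, so your generating-function derivation is a genuinely independent, self-contained argument, and its core is sound. Your suspicion about the half-line is also correct and worth stating outright: the lower limit $0$ in the printed statement is a typo for $-\infty$ — the right-hand side is precisely the full-line transform (a half-line integral would produce error-function terms in $\xi$), and the lemma is applied in the paper (proofs of Lemma \ref{lemma:FT_phik} and Lemma \ref{lemma:ft_laguerre}) as a genuine Fourier transform over $\R$. Once the integral runs over all of $\R$, your anticipated obstacle disappears entirely: $\int_{-\infty}^{\infty} e^{-\pi(x+c)^2}\,dx = 1$ for every complex shift $c$ by a standard contour shift, so no complementary-error-function factor ever arises, and the interchange of the sum over $k$ with the integral is justified by the entire, Gaussian-dominated convergence of the generating series.

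There is, however, a genuine slip in your final step. The coefficient of $t^k/k!$ in $e^{-\pi\xi^2}e^{-2i\sqrt{\pi}\,t\xi}$ is $(-2i\sqrt{\pi}\,\xi)^k e^{-\pi\xi^2}$, and dividing by $\sqrt{2\pi}^{\,k}$ gives $(-\sqrt{2}\,i\xi)^k e^{-\pi\xi^2}$, \emph{not} $(-\sqrt{\pi}\,i\xi)^k e^{-\pi\xi^2}$: the two differ by $(2/\pi)^{k/2}$. A $k=1$ check confirms this: $H_1(-\sqrt{\pi}x) = -2\sqrt{\pi}\,x$ and $\int_{-\infty}^{\infty} x\,e^{-\pi x^2}e^{2\pi i \xi x}\,dx = i\xi e^{-\pi\xi^2}$, so the normalized transform is $-\sqrt{2}\,i\xi\,e^{-\pi\xi^2}$. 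In other words, your computation, carried through honestly, shows that the printed normalization is itself off — the identity holds with prefactor $2^{-k}$ in place of $\sqrt{2\pi}^{\,-k}$ (equivalently, with $\sqrt{2}$ in place of $\sqrt{\pi}$ on the right) — yet you assert that the coefficient comes out ``exactly'' as printed. That claim would not survive checking and should be replaced by the corrected constant together with a remark on the typo. (A likely source of the confusion: the paper's own definition of $H_k$ conflates the physicists' normalization $(-1)^k e^{t^2}(d/dt)^k e^{-t^2}$ with the probabilists' one $e^{t^2/2}(t - d/dt)^k e^{-t^2/2}$, which differ by $H_k(t) = 2^{k/2}He_k(\sqrt{2}\,t)$.) Your Rodrigues-formula fallback, $H_k(-\sqrt{\pi}x)e^{-\pi x^2} = \pi^{-k/2}(d/dx)^k e^{-\pi x^2}$, is correct and gives the same (corrected) constant via $k$-fold integration by parts with vanishing boundary terms on the full line.
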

For the Laguerre polynomials, we have the following result:
\begin{lemma}\label{lemma:ft_laguerre}
  Let $z = x +iy$ be a complex variable. The Fourier transform in $z$ of the Laguerre polynomial 
  \begin{equation}
    L_{k}\left(\pi\abs{z}^2\right)e^{-\pi\abs{z}^2}\quad\text{is given by}
   \quad \frac{ \pi^{2k}}{2^{k} k!}\abs{w}^{2k} 
 e^{-\pi\abs{w}^2}, 
  \end{equation}
with the transformed variable $w$. 
\end{lemma}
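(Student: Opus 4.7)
My plan is to reduce the two-dimensional Fourier transform to a pair of one-dimensional Hermite transforms, for which Lemma \ref{lemma:ft_hermite} is already available. Writing $z = x + iy$ and $w = \xi_1 + i\xi_2$, the Gaussian factor $e^{-\pi|z|^2} = e^{-\pi x^2}e^{-\pi y^2}$ separates, so the 2D Fourier transform factors as a product of 1D transforms in $x$ and $y$. The strategy is therefore to rewrite $L_k(\pi|z|^2)e^{-\pi|z|^2}$ as an explicit finite sum of tensor products $f(x)g(y)$ of one-variable Schwartz functions for which the Fourier transform is already understood.

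The key tool for this rewriting is identity \eqref{eq:Laguerre_binom_Hermite} from the appendix, which expresses $L_k(2\pi|z|^2)e^{-\pi|z|^2}$ as a binomial combination of products $H_{2(k-\ell)}(\sqrt{2\pi}x)H_{2\ell}(\sqrt{2\pi}y)$ times the Gaussian. To accommodate the argument $\pi|z|^2$ rather than $2\pi|z|^2$ in the Laguerre polynomial of the claim, I would first perform the rescaling $z \mapsto z/\sqrt{2}$, which sends the Fourier variable to $\sqrt{2}\,w$ and introduces an explicit Jacobian; this reduces the claim to a statement about $L_k(2\pi|z|^2)e^{-2\pi|z|^2}$ to which \eqref{eq:KM_Laguerre} and \eqref{eq:Laguerre_binom_Hermite} apply directly.

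Once this sum-of-products form is in place, I would apply Lemma \ref{lemma:ft_hermite} to each factor separately; the lemma produces a monomial $\xi_1^{2(k-\ell)}\xi_2^{2\ell}$ times $e^{-\pi|w|^2}$ for each term of the sum, with an explicit prefactor. Reassembling the pieces, the binomial theorem collapses the sum $\sum_{\ell=0}^k \binom{k}{\ell}\xi_1^{2(k-\ell)}\xi_2^{2\ell} = (\xi_1^2+\xi_2^2)^k = |w|^{2k}$, producing the desired structural form $|w|^{2k}e^{-\pi|w|^2}$.

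The main obstacle will be bookkeeping: I must carefully track the constants coming from \eqref{eq:KM_Laguerre}, the Jacobian of the rescaling, the Hermite normalization in Lemma \ref{lemma:ft_hermite}, and the powers of $\sqrt{2\pi}$ in the arguments of the Hermite polynomials, and verify that they collapse to the stated coefficient $\pi^{2k}/(2^k k!)$. A cleaner alternative worth running in parallel is to establish the intertwining relations $\mathcal{F}\circ \Da_1 = -i\bar w \cdot\mathcal{F}$ and $\mathcal{F}\circ \bar\Da_1 = -iw\cdot\mathcal{F}$ (up to normalization) directly from integration by parts, and then combine them with \eqref{eq:KM_Laguerre} to convert $\Da_1^k\bar\Da_1^k \varphi_0$ into $|w|^{2k}\hat\varphi_0$ in a single step, bypassing the Hermite detour entirely.
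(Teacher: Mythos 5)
Your main route is exactly the paper's proof: it rewrites $L_k(\pi\abs{z}^2)e^{-\pi\abs{z}^2}$ via the Laguerre--Hermite identity \eqref{eq:Laguerre_binom_Hermite} as a binomial sum of products $H_{2(k-j)}(\sqrt{\pi}x)H_{2j}(\sqrt{\pi}y)e^{-\pi(x^2+y^2)}$, applies Lemma \ref{lemma:ft_hermite} once in each variable, and collapses the result with the binomial theorem to $\abs{w}^{2k}e^{-\pi\abs{w}^2}$. The only cosmetic difference is that the paper sidesteps your rescaling step by stating the (scale-free) polynomial identity directly with arguments $\sqrt{\pi}x$, $\sqrt{\pi}y$ --- which is cleaner, since rescaling $z\mapsto z/\sqrt{2}$ also alters the Gaussian and so does not literally land on the form of \eqref{eq:Laguerre_binom_Hermite}.
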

\begin{proof}
  Recall from \eqref{eq:KM_Laguerre} that
    $\left(\Da_\alpha\Da_\alpha\right)^k \varphi_0 = L_k(2\pi\abs{z_\alpha})$. Thus, the claim follows directly from Lemma \ref{lemma:ft_hermite} via \eqref{eq:Laguerre_binom_Hermite}.
 Indeed, since
  \[
    \left(\frac{-1}{\pi}\right)^k 2^k k! L_{k}\left(\pi\abs{z}^2\right)e^{-\pi\abs{z}^2} = 
    \left(\frac{1}{2\pi}\right)^k \sum_{j = 0}^k \binom{k}{j}
    H_{2(k-j)}\left(\sqrt{\pi}x\right) 
    H_{2j} \left( \sqrt{\pi}y\right)e^{-\pi\left(x^2 + y^2 \right)}, 
    \]
    after applying \eqref{eq:FM_Hk} twice and writing $w = \xi + i\eta$, we get the Fourier transform
    \[
     (-1)^k \pi^k \sum_{j=0}^k
 \binom{k}{j}
    \xi^{2(k-j)} \eta^{2j} = (-1)^k \pi^k \abs{w}^{2k},
    \]
    as claimed.
  \end{proof}

\bibliographystyle{myplainnat}
\bibliography{habil_bib}

\begin{thebibliography}{18}
\providecommand{\natexlab}[1]{#1}
\providecommand{\url}[1]{\texttt{#1}}
\expandafter\ifx\csname urlstyle\endcsname\relax
  \providecommand{\doi}[1]{doi: #1}\else
  \providecommand{\doi}{doi: \begingroup \urlstyle{rm}\Url}\fi

\bibitem[Borcherds(1998)]{Bo98}
Richard~E. Borcherds.
\newblock Automorphic forms with singularities on {G}rassmannians.
\newblock \emph{Invent. Math.}, 132\penalty0 (3):\penalty0 491--562, 1998.
\newblock ISSN 0020-9910.
\newblock
  \href{https://dx.doi.org/10.1007/s002220050232}{\doi{10.1007/s002220050232}}.

\bibitem[Bruinier(2002)]{Br02}
Jan~H. Bruinier.
\newblock \emph{Borcherds products on {O}(2, {$l$}) and {C}hern classes of
  {H}eegner divisors}, volume 1780 of \emph{Lecture Notes in Mathematics}.
\newblock Springer-Verlag, Berlin, 2002.
\newblock ISBN 3-540-43320-1.
\newblock \href{https://dx.doi.org/10.1007/b83278}{\doi{10.1007/b83278}}.

\bibitem[{Bruinier} and {Funke}(2004)]{BrF04}
Jan~Hendrik {Bruinier} and Jens {Funke}.
\newblock {On two geometric theta lifts.}
\newblock \emph{{Duke Math. J.}}, 125\penalty0 (1):\penalty0 45--90, 2004.
\newblock ISSN 0012-7094; 1547-7398/e.
\newblock
  \href{https://dx.doi.org/10.1215/S0012-7094-04-12513-8}{\doi{10.1215/S0012-7094-04-12513-8}}.

\bibitem[{\relax DLMF}()]{DLMF}
{\relax DLMF}.
\newblock {{NIST} digital library of mathematical functions}.
\newblock http://dlmf.nist.gov/, Release 1.0.25 of 2019-12-15.
\newblock URL \url{http://dlmf.nist.gov/}.
\newblock F.~W.~J. Olver, A.~B. {Olde Daalhuis}, D.~W. Lozier, B.~I. Schneider,
  R.~F. Boisvert, C.~W. Clark, B.~R. Miller, B.~V. Saunders, H.~S. Cohl, and
  M.~A. McClain, eds.

\bibitem[Erd{\'e}lyi et~al.(1954)Erd{\'e}lyi, Magnus, Oberhettinger, and
  Tricomi]{EMOT54}
A.~Erd{\'e}lyi, W.~Magnus, F.~Oberhettinger, and F.~G. Tricomi.
\newblock \emph{Tables of integral transforms. {V}ol. {I}}.
\newblock McGraw-Hill Book Company, Inc., New York-Toronto-London, 1954.
\newblock Based, in part, on notes left by Harry Bateman.

\bibitem[Funke and Hofmann(2021)]{FH21}
Jens Funke and Eric Hofmann.
\newblock {T}he construction of {G}reen currents and singular theta lifts for
  unitary groups.
\newblock \emph{Trans. Amer. Math. Soc}, 2021.
\newblock \href{https://dx.doi.org/10.1090/tran/8289}{\doi{10.1090/tran/8289}}.
\newblock published electronically: January 27, 2021.

\bibitem[Funke and Millson(2013)]{FM13}
Jens Funke and John Millson.
\newblock Boundary behaviour of special cohomology classes arising from the
  {W}eil representation.
\newblock \emph{J. Inst. Math. Jussieu}, 12\penalty0 (3):\penalty0 571--634,
  2013.
\newblock ISSN 1474-7480.
\newblock
  \href{https://dx.doi.org/10.1017/S1474748012000795}{\doi{10.1017/S1474748012000795}}.

\bibitem[Harvey and Moore(1996)]{HM96}
Jeffrey~A. Harvey and Gregory Moore.
\newblock Algebras, {BPS} states, and strings.
\newblock \emph{Nuclear Phys. B}, 463\penalty0 (2-3):\penalty0 315--368, 1996.
\newblock ISSN 0550-3213.
\newblock
  \href{https://dx.doi.org/10.1016/0550-3213(95)00605-2}{\doi{10.1016/0550-3213(95)00605-2}}.

\bibitem[Hofmann(2011)]{Hof11}
Eric Hofmann.
\newblock \emph{Automorphic products on unitary groups}.
\newblock PhD thesis, TU Darmstadt, 2011.
\newblock URL \url{http://tuprints.ulb.tu-darmstadt.de/2540/}.

\bibitem[Hofmann(2014)]{Hof14}
Eric Hofmann.
\newblock {B}orcherds products on unitary groups.
\newblock \emph{Mathematische Annalen}, 354:\penalty0 799--832, 2014.
\newblock
  \href{https://dx.doi.org/10.1007/s00208-013-0966-6}{\doi{10.1007/s00208-013-0966-6}}.

\bibitem[Hofmann(2019)]{Hof17}
Eric Hofmann.
\newblock {L}ocal {B}orcherds products for unitary groups.
\newblock \emph{Nagoya math. journal}, 234:\penalty0 139--169, 2019.
\newblock
  \href{https://dx.doi.org/10.1017/nmj.2017.37}{\doi{10.1017/nmj.2017.37}}.

\bibitem[Hofmann(2021)]{H21}
Eric Hofmann.
\newblock {G}eometric {L}iftings for {U}nitary {G}roups.
\newblock Habilitationsschrift, Heidelberg, 2021.
\newblock URL
  \url{https://www.mathi.uni-heidelberg.de/~hofmann/files/non_html/habil.pdf}.

\bibitem[Howard(2015)]{How13}
Ben Howard.
\newblock Complex multiplication cycles and {K}udla-{R}apoport divisors {II}.
\newblock \emph{American J. Math.}, 137\penalty0 (3):\penalty0 639--698, 2015.
\newblock
  \href{https://dx.doi.org/10.1353/ajm.2015.0021}{\doi{10.1353/ajm.2015.0021}}.

\bibitem[Kudla(2016)]{K16}
Stephen Kudla.
\newblock Another product for a {B}orcherds form.
\newblock In \emph{Advances in the theory of automorphic forms and their
  {$L$}-functions}, volume 664 of \emph{Contemp. Math.}, pages 261--294. Amer.
  Math. Soc., Providence, RI, 2016.
\newblock
  \href{https://dx.doi.org/10.1090/conm/664/13064}{\doi{10.1090/conm/664/13064}}.

\bibitem[Kudla(1979)]{K79}
Stephen~S. Kudla.
\newblock On certain arithmetic automorphic forms for {$\mathrm{ SU}(1,\,q)$}.
\newblock \emph{Invent. Math.}, 52\penalty0 (1):\penalty0 1--25, 1979.
\newblock ISSN 0020-9910.
\newblock
  \href{https://dx.doi.org/10.1007/BF01389855}{\doi{10.1007/BF01389855}}.

\bibitem[Kudla and Millson(1990)]{KM90}
Stephen~S. Kudla and John~J. Millson.
\newblock Intersection numbers of cycles on locally symmetric spaces and
  {F}ourier coefficients of holomorphic modular forms in several complex
  variables.
\newblock \emph{Inst. Hautes \'Etudes Sci. Publ. Math.}, 71\penalty0
  (71):\penalty0 121--172, 1990.
\newblock ISSN 0073-8301.
\newblock
  \href{https://dx.doi.org/10.1007/BF02699880}{\doi{10.1007/BF02699880}}.

\bibitem[Murase and Sugano(2007)]{MuSu2}
Atsushi Murase and Takashi Sugano.
\newblock On the {F}ourier-{J}acobi expansion of the unitary {K}udla lift.
\newblock \emph{Compos. Math.}, 143\penalty0 (1):\penalty0 1--46, 2007.
\newblock ISSN 0010-437X.
\newblock
  \href{https://dx.doi.org/10.1112/S0010437X06002491}{\doi{10.1112/S0010437X06002491}}.

\bibitem[{Siegel}(1965)]{STata65}
Carl~Ludwig {Siegel}.
\newblock {Lectures on advanced analytic number theory. Notes by S. Raghavan}.
\newblock {Tata Institute of Fundamental Research Lectures on Mathematics. No.
  23. Bombay: Tata Institute of Fundamental Research. iii}, 1965.

\end{thebibliography}
\end{document}